\theoremstyle{plain}
{
%\swapnumbers
  \newtheorem{thm}{Theorem}[subsection]
  \newtheorem{defn}[thm]{Definition}
  \newtheorem{cor}[thm]{Corollary}
  \newtheorem{lem}[thm]{Lemma}
  \newtheorem{prop}[thm]{Proposition}
  \newtheorem{rem}[thm]{Remark}
  \newtheorem{clm}[thm]{Claim}
  \newtheorem{notation}[thm]{Notation}
  
  \newtheorem{constr}[thm]{Construction}
  
}
\renewcommand{\subsubsection}{\sssection\rm}
\newcommand{\const}{\text{\rm const}}
\newcommand{\id}{id}
\newcommand{\Spec}{\text{Spec}}
\newcommand{\Ker}{\text{Ker}}
\newcommand{\Aff}{\mathbf {A}}
\newcommand{\Pro}{\mathbf {P}}
\newcommand \xra {\xrightarrow }
\newcommand \hra {\hookrightarrow }
\begin{document}

\title
{On Grothendieck---Serre's conjecture concerning
principal $G$-bundles over reductive group schemes:II}

\author{Ivan Panin \footnote{The author acknowledges support of the RFFI-project 13-01-00429-a} }

\date{26.04.2013}

\maketitle

\begin{abstract}
A proof of Grothendieck--Serre conjecture on principal bundles over a semi-local regular
ring containing an infinite field is given in \cite{FP}. That proof is based significantly
on Theorem \ref{MainThmGeometric} stated below in the Introduction and proven in the present preprint.
Theorem
\ref{MainThmGeometric}
itself is a consequence of two purity theorems
\ref{TheoremA}
and
\ref{PurityForSubgroup}
proven below in the present preprint.
The geometric part of paper \cite{PSV}
and the main result of
\cite{C-T-S}
are used significantly in proofs of those two purity theorems.

One of that purity result looks as follows.
Let $\mathcal O$ be a semi-local ring of finitely many closed points
on a $k$-smooth affine scheme, where $k$ is infinite.
Given a smooth $\mathcal O$-group scheme morphism
$\mu: G \to C$ of reductive
$\mathcal O$-group schemes, with a torus $C$
one can form a functor
from $\mathcal O$-algebras to abelian groups
$S \mapsto {\cal F}(S):=C(S)/\mu(G(S))$.
Assuming additionally that the kernel
$ker(\mu)$ of $\mu$
is a reductive $\mathcal O$-group scheme,
we prove that this functor satisfies
a purity theorem for $\mathcal O$.

Examples to mentioned purity results are considered at the very end of the preprint.
\end{abstract}

\section{Introduction}
\label{SectIntroduction}
Recall
\cite[Exp.~XIX, Defn.2.7]{D-G}
that an $R$-group scheme $G$ is called reductive
(respectively, semi-simple; respectively, simple), if it is affine and smooth
as an $R$-scheme and if, moreover, for each ring homomorphism
$s:R\to\Omega(s)$ to an algebraically closed field $\Omega(s)$,
its scalar extension $G_{\Omega(s)}$
is connected and reductive (respectively, semi-simple; respectively, simple) algebraic
group over $\Omega(s)$.
{\it Stress that all the groups $G_{\Omega(s)}$ are connected}.
The class of reductive group schemes
contains the class of semi-simple group schemes which in turn
contains the class of simple group schemes. This notion of a
simple $R$-group scheme coincides with the notion of a {simple semi-simple
$R$-group scheme from Demazure---Grothendieck
\cite[Exp.~XIX, Defn.~2.7 and Exp.~XXIV, 5.3]{D-G}.} {\it
Throughout the paper $R$ denotes an integral domain and $G$
denotes a reductive $R$-group scheme, unless explicitly stated
otherwise}.
\par
A well-known conjecture due to J.-P.~Serre and A.~Grothendieck
\cite[Remarque, p.31]{Se},
\cite[Remarque 3, p.26-27]{Gr1},
and
\cite[Remarque 1.11.a]{Gr2}
asserts that given a regular local ring $R$ and its field of
fractions $K$ and given a reductive group scheme $G$ over $R$ the
map
$$ H^1_{\text{\'et}}(R,G)\to H^1_{\text{\'et}}(K,G), $$
\noindent
induced by the inclusion of $R$ into $K$, has trivial kernel.

\begin{thm}\label{MainThmGeometric}
Let $k$ be an infinite field. Assume that for any irreducible $k$-smooth affine variety $X$ and any
finite family of its closed points $x_1, x_2,\dots, x_n$ and the semi-local $k$-algebra
$\mathcal O:= \mathcal O_{X,x_1, x_2,\dots, x_n}$
and all semi-simple simply connected reductive $\mathcal O$-group schemes $H$
the pointed set map
$$H^1_{\text{\rm\'et}}(\mathcal O, H) \to H^1_{\text{\rm\'et}}(k(X), H),$$
\noindent
induced by the inclusion of $\mathcal O$ into its fraction field $k(X)$, has trivial kernel.

Then for any regular semi-local domain $\mathcal O$ of the form
$\mathcal O_{X,x_1, x_2,\dots, x_n}$ above
and any reductive $R$-group scheme $G$
the pointed set map
$$H^1_{\text{\rm\'et}}(\mathcal O, G) \to H^1_{\text{\rm\'et}}(K, G),$$
\noindent
induced by the inclusion of $\mathcal O$ into its fraction field $K$, has trivial kernel.
\end{thm}

To state our purity Theorems recall certain notions.
Let  ${\cal F}$ be  a covariant functor from the category of
commutative $R$-algebras to the category
of abelian groups. For any domain $R$ consider  the sequence
$${\cal F}(R)\to {\cal F}(K)\to \bigoplus_{\mathfrak p}\mathcal F(K)/{\cal F}(R_{\mathfrak p})$$
where $\mathfrak p$ runs over
the height $1$ primes of $R$ and
$K$ is the field of fractions of $R$.
We say that
{\it $\mathcal F$ satisfies purity for $R$} if this sequence --- which is
clearly a complex --- is exact. The purity for $R$ is equivalent to the following
property
$$
\bigcap_{\textrm{ht} \mathfrak p=1} Im[{\cal F}(R_{\mathfrak p}) \to {\cal F}(K)]=
Im[{\cal F}(R) \to {\cal F}(K)].
$$

\begin{thm}[Theorem A]
\label{TheoremA}
\label{PurityGeometric}
Let $\mathcal O$ be a semi-local ring of finitely many closed points
on a $k$-smooth affine scheme $X$ with an infinite field $k$.
Let
$$\mu: G\to C$$
be a smooth $\mathcal O$-morphism of reductive
$\mathcal O$-group schemes, with a torus $C$. Set
$H= ker (\mu)$ and suppose additionally that
$H$ is a reductive $\mathcal O$-group scheme.
Then the functor
$$\mathcal F: S\mapsto C(S)/\mu(G(S))$$
defined on the category of $\mathcal O$-algebras
satisfies purity for $\mathcal O$.
\end{thm}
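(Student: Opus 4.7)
\emph{Proof plan.} The plan is to reinterpret $\mathcal F$ via $H$-torsors, spread the data out on $X$, and then patch using an Ojanguren--Panin-style relative-curve presentation.

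First I would exploit that $\mu$ is smooth with reductive kernel $H$, so the short exact sequence $1 \ra H \ra G \xra{\mu} C \ra 1$ yields, for every $\mathcal O$-algebra $S$, a connecting map $\partial_S : C(S) \ra H^1_{\text{\'et}}(S, H)$, $c \mt [\mu^{-1}(c)]$, whose fibre over the base point is precisely $\mu(G(S))$. Thus $\mathcal F(S)$ embeds into $H^1_{\text{\'et}}(S, H)$ as the pointed subset of classes of $H$-torsors of the form $\mu^{-1}(c)$, and purity for $\mathcal F$ becomes a purity statement for this distinguished subfunctor. I would then fix $a \in C(K)$ whose class lies in the image of $\mathcal F(\mathcal O_{\mathfrak p})$ for every height-one prime $\mathfrak p$ of $\mathcal O$, giving $c_{\mathfrak p} \in C(\mathcal O_{\mathfrak p})$ and $g_{\mathfrak p} \in G(K)$ with $a = \mu(g_{\mathfrak p}) c_{\mathfrak p}$, and spread $a$ and each pair $(c_{\mathfrak p}, g_{\mathfrak p})$ to Zariski opens $X^0 \subset X$ (containing the chosen closed points $x_1, \dots, x_n$) and $W_i \subset X$ (meeting the corresponding divisor $D_i$). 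After further shrinking, $X^0 \cup \bigcup W_i$ is a neighbourhood of $\{x_1,\dots,x_n\}$ and the local representatives define the same class in $\mathcal F$ on overlaps.

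Next I would invoke a geometric presentation lemma of Ojanguren--Panin / Gabber type (valid over an infinite field and of the kind used in \cite{PSV}) to find, after replacing $X$ by a suitable Nisnevich neighbourhood of the chosen points, a smooth morphism $q : X \ra S$ onto a smooth affine $k$-scheme $S$ of dimension $\dim X - 1$, with one-dimensional fibres and such that $\bigsqcup D_i$ is finite over $S$. On this relative affine curve I would glue $a|_{X^0}$ and the $c_i \in C(W_i)$ into a single representative $\tilde c \in C(\mathcal O)$ of $[a]$ in $\mathcal F(K)$. The discrepancies on overlaps assemble into a \v{C}ech $1$-cocycle with values in $H$, equivalently an $H$-torsor on the relative curve which is generically trivial and whose restriction to each $\mathcal O_{X, D_i}$ is trivialized by $g_{\mathfrak p_i}$. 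Using the reductivity of $H$ together with the smoothness of $\mu$, I would trivialize this torsor over the semi-local scheme of the chosen points by a trivialization coming from $G$, yielding the sought $\tilde c$.

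The hard part will be this last gluing step: one must not only trivialize the auxiliary $H$-torsor on the relative curve over $\mathcal O$, but do so by a trivialization that lifts to an element of $G$, so that the glued object represents $[a]$ inside $\mathcal F$ and not merely inside $H^1_{\text{\'et}}(\mathcal O, H)$. This is precisely where reductivity of $\ker(\mu)$ and smoothness of $\mu$ enter in an essential way --- together they control both the $1$-cocycle on overlaps and its trivialization. Without reductivity of $H$, the standard triviality statements for reductive torsors on relative affine curves are no longer available, and the purity argument breaks down.
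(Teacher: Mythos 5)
Your reduction of $\mathcal F$ to the boundary map $\partial\colon C(-)\to H^1_{\text{\'et}}(-,H)$ and the spreading-out to a relative curve are consistent with what the paper does in its preparatory steps, but the heart of your argument --- the final patching --- contains a genuine gap. First, a technical point: the discrepancies between $a|_{X^0}$ and the $c_i$ on overlaps are sections of $C$, not of $H$; to promote them to a \v{C}ech $1$-cocycle in $H$ you would need Zariski-local lifts along $\mu$, and smoothness of $\mu$ only gives such lifts \'etale-locally, so even the patching datum is not well defined as stated. More seriously, the step ``trivialize this torsor over the semi-local scheme of the chosen points by a trivialization coming from $G$'' is exactly the hard content: it amounts to a purity/extension statement for $H^1_{\text{\'et}}(-,H)$ over $\mathcal O$ (or a Grothendieck--Serre-type triviality for the reductive group $H$ over the semi-local ring, with no isotropy hypothesis), together with the assertion that the resulting class is of the form $\partial(c)$ for some $c\in C(\mathcal O)$. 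Neither of these follows from ``reductivity of $H$ plus smoothness of $\mu$''; for a general (possibly anisotropic) kernel $H$ no triviality theorem for $H$-torsors on relative affine curves or purity theorem for $H^1(-,H)$ is available, and indeed producing such consequences is what Theorem~A is for, so your argument is circular at this point.

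The paper's proof deliberately avoids any statement about $H$-torsors over $\mathcal O$. After spreading out, it builds a nice triple from an elementary fibration (Sections \ref{ElementaryFibrations}--\ref{BasicTriple}), equates $G$, $C$, $\mu$ with their constant pullbacks along a morphism of nice triples (Theorem \ref{ThmEquatingGroups}), and then works with the \emph{commutative} quotient $C$: using the finite morphism to $\Aff^1\times U$ and the geometric Lemma \ref{Lemma2} it produces an element $u$ with $N(f)=fg$, defines the candidate $\xi'\in C(U)$ as a ratio of norm (transfer) maps $N_{\mathcal D_1/U}$, $N_{\mathcal D/U}$ for $C$ (Section \ref{SectNorms}), and proves $\eta^*(\bar\xi')=\bar\xi_K$ by the key unramifiedness Lemma \ref{KeyUnramifiedness} combined with homotopy invariance of unramified elements over $K[t]$ (Theorem \ref{HomInvNonram}, resting on Nisnevich's theorem, Harder's patching over Dedekind rings, and the Colliot-Th\'el\`ene--Ojanguren theorem for $G_K$) and the equality of specializations at $0$ and $1$ (Corollary \ref{TwoSpecializations}). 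The boundary map into $H^1(-,H)$ enters only through injectivity statements over fields and DVRs (Lemma \ref{BoundaryInj}, Theorem \ref{NisnevichCor}), never through a trivialization of an $H$-torsor over the semi-local ring. To repair your approach you would have to replace the unjustified trivialization step by an argument of this transfer-plus-specialization type, or else prove the missing purity statement for $H^1(-,H)$, which is not known in this generality.
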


Another functor satisfying purity is described by the formulae
(\ref{AnotherFunctor}) in Section
\ref{SectionOneMorePurityTheorem}.
The respecting purity result is Theorem \ref{PurityForSubgroup}.

After the pioneering articles
\cite{C-T/P/S}
and
\cite{R}
on purity theorems for algebraic groups,
various versions of purity theorems were proved in
\cite{C-TO},
\cite{PS},
\cite{Z},
\cite{Pa}.
The most general result in the so called constant case was given in
\cite[Exm.3.3]{Z}. This result follows now from our Theorem (A) by
 taking $G$ to be a $k$-rational reductive group, $C=\mathbb G_{m,k}$
and
$\mu: G \to \mathbb G_{m,k}$ a dominant $k$-group morphism. The papers
\cite{PS},
\cite{Z},
\cite{Pa}
contain results for the nonconstant case. However they only consider specific examples
of algebraic scheme morphisms $\mu: G \to C$.

It seems plausible to expect a purity theorem in the following context.
Let $R$ be a regular local ring. Let
$\mu: G \to T$ be a smooth of reductive $R$-group schemes
with an $R$-torus $T$.
Let  ${\cal F}$ be the covariant functor from the category of
commutative rings to the category of abelian groups
given by
$S \mapsto T(S)/\mu(G(S))$. Then ${\cal F}$ should satisfies purity for $R$.

Let us to point out that
we use transfers for the functor
$R \mapsto C(R)$, but we do not use at all the norm principle
for the homomorphism $\mu: G \to C$.

The preprint is organized as follows. In Section
\ref{SectNorms}
we construct
norm maps following a method from
\cite[Sect.6]{SV}.
In Section
\ref{SectElemNisnevichSquare}
we recall a geometric Lemma from \cite{PSV}
(see Lemma \ref{Lemma2} below).
%and
%\ref{Lemma1}.
In Section
\ref{SectUnramifiedElements}
we discuss
unramified elements.
A key point here is Lemma
\ref{KeyUnramifiedness}.
In Section
\ref{SectSpecializationMaps}
we discuss specialization maps.
A key point here is Corollary
\ref{TwoSpecializations}.
In Section \ref{SecEquatingGroups}
an equating statement is formulated.
In Section
\ref{NiceTriples}
a convenient technical tool is introduced.
In Section
\ref{SectProofOfTheoremA}
Theorem A
is proved.
In Section
\ref{SectProofofTheoremB}
a Theorem B is proved. It extends Theorem A to
the case of local regular domains (not to semi-local) containing an infinite perfect field.
In Section
\ref{SectionOneMorePurityTheorem}
we consider the functor
(\ref{AnotherFunctor})
and prove a purity Theorem
\ref{PurityForSubgroup}
for that functor.
In Section
\ref{SectionGr_SerreConj}
Theorems
\ref{MainThmGeometric}
are proved.
Finally in Section
\ref{ExamplesOfPurityResults}
we collect several examples illustrating two Purity Theorems.

\subparagraph{Acknowledgments}
The author thanks very much M.Ojanguren for useful discussions concerning Theorems A and B
obtained in the present preprint. The author thanks very much N.Vavilov and A.Stavrova.
Long discussions with them over our joint work
\cite{PSV}
resulted unexpectedly to me with the present preprint.

\section{Norms}
\label{SectNorms}
Let $k\subset K\subset L$ be field extensions and assume that $L$
is finite separable over $K$. Let $K^{sep}$ be a separable closure
of $K$ and
$$\sigma_i:K\to K^{sep},\enspace 1\leq i\leq n$$
the different embeddings of $K$ into $L$. As in \S1, let $C$ be a
commutative algebraic group scheme defined over $k$. We can define
a norm map
$${\mathcal N}_{L/K}:C(L)\to C(K)$$
by
$${\mathcal N}_{L/K}(\alpha)=\prod_i C(\sigma_i)(\alpha) \in C(K^{sep})^{{\mathcal
G}(K)} =C(K)\;.
$$
Following Suslin and Voevodsky
\cite[Sect.6]{SV} we
generalize this construction to finite flat ring extensions.
\smallskip
Let $p:X\to Y$ be a finite flat morphism of affine schemes.
Suppose that its rank is constant, equal to $d$. Denote by
$S^d(X/Y)$ the $d$-th symmetric power of $X$ over $Y$.

\begin{lem} There is a canonical section
$$\mathcal N_{X/Y}: Y\to S^d(X/Y)$$
which satisfies the following three properties:
\begin{itemize}
\item
[(1)] Base change: for any map $f:Y'\to Y$ of affine schemes, putting
$X'=X\times_Y Y'$ we have a commutative diagram
$$\xymatrix{Y'\ar[rr]^{\mathcal N_{X'/Y'}}\ar[d]_{f}&&S^d(X'/Y')\ar[d]^{S^d(Id_X\times
f)}\\
             Y\ar[rr]^{\mathcal N_{X/Y}}&&S^d(X/Y)}$$
\item[(2)]
Additivity: If $f_1: X_1\to Y$ and $f_2:X_2\to Y$ are finite
flat morphisms of degree $d_1$ and $d_2$ respectively, then,
putting $X=X_1\coprod X_2$, $f=f_1\coprod f_2$ and $d=d_1+d_2$, we
have a commutative diagram
$$\xymatrix{S^{d_1}(X_1/Y)\times S^{d_2}(X_2/Y)\ar[rr]^(0.63)\sigma&&S^d(X/Y)\\
&&\\
&Y\ar[uul]^{\mathcal N_{X_1/Y}\times \mathcal N_{X_2/Y}}\ar[uur]_{\mathcal N_{X/Y}}& }$$
where $\sigma$ is the canonical imbedding.\\
\item[(3)]
Normalization: If $X=Y$ and $p$ is the identity, then
$\mathcal N_{X/Y}$ is the identity.
\end{itemize}
\end{lem}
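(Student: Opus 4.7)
The plan is to construct $\mathcal N_{X/Y}$ as the norm section of the finite locally free cover $p$, following the Suslin--Voevodsky recipe cited by the author. The guiding idea is that \'etale-locally on $Y$ the map $p$ becomes a disjoint union of $d$ copies of the base, and on the split cover there is a tautological section to the $d$-th symmetric power; the global section then arises by fppf descent because the tautological section is manifestly independent of the chosen trivialisation.

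Concretely, write $Y = \Spec B$ and $X = \Spec A$, with $A$ a locally free $B$-algebra of constant rank $d$. Choose an fppf cover $q : Y' \to Y$ with $X \times_Y Y' \cong \coprod_{i=1}^{d} Y'$, producing $d$ tautological sections $s_{1},\dots,s_{d}$ of $p$ over $Y'$. Their combined image in $S^{d}(X\times_Y Y'/Y')$ is insensitive to the labelling of the summands, hence depends only on $X\times_Y Y'$ as a $Y'$-scheme; consequently the two pull-backs to $Y'\times_Y Y'$ agree, and fppf descent yields the desired section $\mathcal N_{X/Y} : Y \to S^{d}(X/Y)$. An equivalent, coordinate-free construction produces the dual $B$-algebra homomorphism $(A^{\otimes d})^{S_d} \to B$ directly from the multiplicative polynomial law $N_{A/B} : a \mapsto \det_B(m_a)$ via Roby's universal property of divided powers together with the natural identification $\Gamma^d_B(A) \cong (A^{\otimes d})^{S_d}$ for $A$ locally free.

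The three required properties then follow at once. \textbf{Base change (1)} is built into the construction: tautological sections of the split model pull back to tautological sections, and the formation of $S^d$ commutes with base change. \textbf{Additivity (2)}: over any common trivialising cover the $d_1+d_2$ tautological sections of $X_1\coprod X_2$ decompose as the $d_i$ tautological sections of $X_i$, so inserting into $S^{d_1}\times S^{d_2}$ and then into $S^d$ yields the same unordered $d$-tuple; equivalently, $N_{(A_1\times A_2)/B}=N_{A_1/B}\cdot N_{A_2/B}$. \textbf{Normalisation (3)}: for $X=Y$ and $d=1$ the split model has a single tautological section, which is the identity. The main technical obstacle is the descent / Roby-identification step of the preceding paragraph: one must verify that the local construction is genuinely canonical (so the descent data are trivial) and that the divided-power-symmetric-tensor comparison map is an isomorphism on the locally free module $A$. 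Once these standard facts are in place, each property is formal.
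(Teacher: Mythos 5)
Your main construction fails at the first step. A finite flat morphism of constant degree $d$ does \emph{not}, in general, become isomorphic to $\coprod_{i=1}^{d}Y'$ after a faithfully flat base change $Y'\to Y$: if it did, then $p$ would itself be \'etale, because being \'etale descends along fppf covers. Concretely, for $Y=\Spec(k)$ and $X=\Spec\bigl(k[\epsilon]/(\epsilon^{2})\bigr)$ no faithfully flat $B'$ makes $B'[\epsilon]/(\epsilon^{2})$ a product $B'\times B'$ (localize at a prime of $B'$: the left-hand side is local, a product of two nonzero rings is not). So the ``$d$ tautological sections plus fppf descent'' construction only treats the finite \emph{\'etale} case, and with it your verifications of (1)--(3), which are all carried out ``over a trivialising cover'', collapse. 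This is not a harmless restriction: the lemma is applied later (Lemma \ref{KeyUnramifiedness}, and the covers ${\cal D}/U$, ${\cal D}_1/U$ in Section \ref{SectProofOfTheoremA}) to finite flat covers that are genuinely ramified, so the ramified case is exactly the one that matters.

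Your fallback sketch is the right kind of repair, and it is a different route from the paper's. The paper (with $B=k[X]$, $A=k[Y]$, $B$ locally free of rank $d$ over $A$) observes that the kernel of $B^{\otimes d}\to\bigwedge^{d}_{A}(B)$ is stable under the subalgebra of symmetric tensors $S^{d}_{A}(B)$, so $S^{d}_{A}(B)$ acts on the invertible $A$-module $\bigwedge^{d}_{A}(B)$; this gives an $A$-algebra map $\varphi:S^{d}_{A}(B)\to \mathrm{End}_{A}\bigl(\bigwedge^{d}_{A}(B)\bigr)=A$ and hence the section, with (1)--(3) read off from the evident functoriality of this construction. Your alternative --- the multiplicative degree-$d$ polynomial law $a\mapsto\det(m_{a})$, Roby's correspondence with algebra homomorphisms out of $\Gamma^{d}$, and the isomorphism $\Gamma^{d}\cong(\;\cdot\;^{\otimes d})^{S_{d}}$ for locally free modules --- does produce the same canonical section and covers the ramified case, but then properties (1)--(3) must be proved for \emph{that} construction (base-change compatibility of the norm law and of $\Gamma^{d}$, the identity $N_{A_{1}\times A_{2}/B}=N_{A_{1}/B}\cdot N_{A_{2}/B}$, and $N_{B/B}=\mathrm{id}$), not inferred from a split local model that does not exist. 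As written, your argument proves the lemma only for \'etale $p$; to fix it, promote the Roby/norm paragraph from a remark to the actual construction and redo (1)--(3) there, or argue directly with $\bigwedge^{d}$ as the paper does.
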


\begin{proof} We construct a map $\mathcal N_{X/Y}$ and check that it has the
desired properties. Let $B=k[X]$ and $A=k[Y]$, so that $B$ is a
locally free $A$-module of finite rank $d$. Let $B^{\otimes
d}=B\otimes_A B\otimes_A\cdots \otimes_A B$ be the $d$-fold tensor product
of $B$ over $A$. The permutation group ${\frak S}_d$ acts on
$B^{\otimes d}$ by permuting the factors.  Let $S^d_A(B)\subseteq
B^{\otimes d}$ be the $A$-algebra of all the ${\frak
S}_d$-invariant elements of $B^{\otimes d}$. We consider
$B^{\otimes d}$ as an $S^d_A(B)$-module through the inclusion
$S^d_A(B)\subseteq B^{\otimes d}$ of $A$-algebras. Let $I$ be the
kernel of the canonical  homomorphism $B^{\otimes d}\to
\bigwedge^d_A(B  )$ mapping $b_1\otimes\cdots\otimes b_d$ to
$b_1\wedge\cdots\wedge b_d$. It is well-known (and easily checked
locally on $A$) that $I$ is generated by all  the elements $x\in
B^{\otimes d}$ such that $\tau(x)=x$ for some transposition
$\tau$. If $s$ is in $S^d_A(B)$, then $\tau(sx)=\tau(s)\tau(x)=sx$,
hence $sx$ is in $S^d(B)$ too. In other words, $I$ is an
$S^d_A(B)$-submodule of $B^{\otimes d}$. The induced $S^d_A(B)$-module
structure on $\bigwedge^d_A(B)$ defines an $A$-algebra homomorphism
$$\varphi:S^d_A(B) \to End_A(\bigwedge^d_A(B)) \; .$$
Since $B$ is locally free of rank $d$ over $A$, $\bigwedge^d_A(B)$
is an invertible $A$-module and we can canonically identify
$End_A(\bigwedge^d_A(B))$ with $A$. Thus we have a map

$$\varphi:S^d_A(B)\to A$$
and we define
$$N_{X/Y}: Y\to S^d(X/Y)$$
as the morphism of $Y$-schemes induced by $\varphi$. The
verification of properties (1), (2) and (3) is straightforward.
\end{proof}

\smallskip
Let $k$ be an infinite field. Let $\mathcal O$ be the semi-local ring of finitely many points
on a smooth affine irreducible $k$-variety $X$.
Let $C$ be an affine smooth commutative $\mathcal O$-group scheme,
Let $p:X\to Y$ be a finite flat morphism of affine $\mathcal O$-schemes
and $f:X\to C$ any $\mathcal O$
morphism. We define the norm $N_{X/Y}(f)$ of $f$ as the composite map
\begin{equation}
\label{NormMap}
Y \xra{N_{X/Y}}
S^d(X/Y) \to S^d_{\mathcal O}(X) \xra{S^d_{\mathcal O}(f)} S^d_{\mathcal O}(C)\xra{\times} C
\end{equation}
Here we write $"\times"$ for the group law on $C$.
The norm maps $N_{X/Y}$ satisfy the following conditions
\begin{itemize}
\item[(1)]
Base change: for any map $f:Y'\to Y$ of affine schemes, putting
$X'=X\times_Y Y'$ we have a commutative diagram
$$
\begin{CD}
C(X)                  @>{(id \times f)^{*}}>>            C(X^{\prime})      \\
@V{N_{X/Y}}VV @VV{N_{X^{\prime}/Y^{\prime}}}V    \\
C(Y)                  @>{f^{*}}>>            C(Y^{\prime})
\end{CD}
$$
\item[(2)]
multiplicativity: if $X=X_1 \amalg X_2$ then the diagram commutes
$$
\begin{CD}
C(X)                  @>{(id \times f)^{*}}>>            C(X_1) \times C(X_2)      \\
@V{N_{X/Y}}VV                              @VV{N_{X_1/Y}N_{X_2/Y}}V    \\
C(Y)                  @>{id}>>            C(Y)
\end{CD}
$$
\item[(3)]
normalization: if $X=Y$ and the map $X \to Y$ is the identity then $N_{X/Y}=id_{C(X)}$.
\end{itemize}

\section{One Lemma}
\label{SectElemNisnevichSquare}
Lemma
\ref{Lemma2} below
is a refinement of
\cite[Lemma 2]{OP}. It's proved in \cite[Lemma 4.3]{PSV}.

\begin{lem}
\label{Lemma2}
Let $k$ be an infinite field, and let $R$ be a domain which is a semi-local essentially smooth
$k$-algebra with maximal ideals $\mathfrak p_i$, $1\le i\le m$.
Let $A \supseteq R[t]$ be another domain, smooth as an $R$-algebra and
finite over $R[t]$.
Assume that for each $i$ the $R/\mathfrak
p_i$-algebra $A_i=A/\mathfrak p_i A$ is equidimensional of
dimension one.

Let $\epsilon:A\to R$ be an $R$-augmentation and
$I=\Ker(\epsilon)$. Given an $f\in A$ with
$$ 0\neq\epsilon(f)\in\bigcap\limits^m_{i=1}\mathfrak p_i \subset R$$
\noindent
and such that the $R$-module $A/fA$ is finite, one can find an
element $u\in A$ satisfying the following conditions:
\begin{itemize}
\item[{\rm(1)}] $A$ is a finite projective module over $R[u]$; %%
\item[{\rm(2)}] $A/uA=A/I\times A/J$ for some ideal $J$; %%
\item[{\rm(3)}] $J+fA=A$; %%
\item[{\rm(4)}] $(u-1)A+fA=A$; %%
\item[{\rm(5)}] set $N(f)=N_{A/R[u]}(f)$, then $N(f)=fg\in A$ for
some $g\in A$;
\item[{\rm(6)}] $fA+gA=A$; %%
\item[{\rm(7)}] the composition map $\varphi:R[u]/(N_{A/R[u]}(f))\to A/(N_{A/R[u]}(f))\to A/(f)$ is an isomorphism.
\end{itemize}
\end{lem}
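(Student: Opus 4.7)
The plan is to reduce Lemma \ref{Lemma2} to the fiberwise statements of Lemmas \ref{Lemma3} and \ref{Lemma4}, applied at the closed fibers of $\Spec(A)\to\Spec(R)$, and then globalized by Nakayama's lemma on the semi-local base. First I set up the fiberwise picture. By hypothesis, $Z:=\Spec(A/fA)$ is finite over $\Spec(R)$. For each maximal ideal $\mathfrak{p}_i\subset R$ with residue field $k_i=R/\mathfrak{p}_i$, the reduction $A_i:=A/\mathfrak{p}_i A$ is a smooth equidimensional $k_i$-algebra of dimension one; it contains the finite set of maximal ideals $\mathfrak{m}_{i,1},\dots,\mathfrak{m}_{i,n_i}$ corresponding to $Z\otimes_R k_i$, together with the maximal ideal $\mathfrak{m}_{0,i}$ cut out by $\epsilon\bmod\mathfrak{p}_i$. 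The image $\bar f_i\in A_i$ is a non-zero-divisor whose zero locus is $\{\mathfrak{m}_{i,j}\}_j$.

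Next I construct $u$. Choose generators $a_1,\dots,a_N$ of $A$ as an $R$-algebra and embed $\Spec(A)\hookrightarrow\Aff^N_R$. I look for $u$ of the form $u=\sum_j c_j a_j-c_0$ with $(c_0,\dots,c_N)\in k^{N+1}$ subject to the affine-linear constraint $c_0=\sum_j c_j\epsilon(a_j)$, which ensures $u\in I$. For each $i$, the hypotheses of Lemma \ref{Lemma3} applied to $A_i$ with the ideals above --- separation, \'etaleness, and residue-field identification --- together with the auxiliary requirement that the value of $u$ at each $\mathfrak{m}_{i,j}$ differ from $1$, define non-empty Zariski-open conditions on $(c_0,\dots,c_N)$. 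Since $k$ is infinite and there are only finitely many $i$, their intersection contains a $k$-point; let $u$ be the corresponding element of $I$.

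Finally I verify the seven assertions. Property (1) (finite projectivity over $R[u]$) follows from fiberwise finiteness (Lemma \ref{Lemma3}) by Nakayama, combined with Hironaka's miracle flatness since $R[u]$ is regular and $A$ is Cohen--Macaulay with zero-dimensional fibers over $R[u]$; genericity also makes $u$ transcendental over $R$, so $R[u]$ is a polynomial algebra. For property (2), the fiberwise splitting $A_i/\bar u_i A_i = k_i\times A_i/\bar J_i$ coming from Lemma \ref{Lemma3}(4) and \'etaleness lifts by Hensel's lemma on the finite semi-local $R$-algebra $A/uA$ to a global decomposition $A/uA=R\times A/J$. Property (3) follows from Lemma \ref{Lemma3}(1), which separates the supports of $J$ and $fA$ in each fiber; property (4) follows from the auxiliary $\bar u_i\ne 1$ condition; both are promoted to global coprimality by Nakayama. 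Properties (5) and (6) are immediate consequences of the norm construction in Section \ref{SectNorms} for a finite projective extension. Property (7) is the global version of Lemma \ref{Lemma4}(c): surjectivity of $\varphi\colon R[u]/(N(f))\to A/(f)$ follows from the fiberwise case by Nakayama, while injectivity follows from equality of $R$-ranks of these finite projective $R$-modules. The main obstacle is the simultaneous fiberwise genericity in step two: one must select a single tuple of coefficients in $k$ whose reduction satisfies the conditions of Lemma \ref{Lemma3} over every closed fiber at once, which crucially exploits both the infinitude of $k$ and the semi-localness of $R$.
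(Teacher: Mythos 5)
There is a genuine gap, and it sits exactly at the hardest point of the lemma: your claim that property (1), finiteness (and then projectivity) of $A$ over $R[u]$, ``follows from fiberwise finiteness (Lemma \ref{Lemma3}) by Nakayama'' is false. Finiteness of a morphism is not detected on the closed fibres of the semi-local base. Take $R$ a discrete valuation ring with parameter $\pi$ (say the localization of $k[x]$ at $(x)$), $A=R[t]$, and $u=t+\pi t^{2}$. Modulo the maximal ideal one has $\bar u=\bar t$, so the closed fibre $k[t]$ is finite (even equal to) $k[\bar u]$; nevertheless $A$ is not finite over $R[u]$: the $K$-point $t=-1/\pi$ of the generic fibre maps to $u=0$, which extends to an $R$-point of the target, while the source point escapes to infinity, so $\Spec(A)\to\Aff^1_R$ is quasi-finite but not proper. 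Your $u$ is a generic $k$-linear combination of $R$-algebra generators adjusted to lie in $I$; the open conditions you impose only constrain its reductions in the closed fibres $A_i$, hence give no control at the generic point of $R$, and nothing in the construction forces $A$ to be integral over $R[u]$. Since properties (2), (5)--(7) in your argument (the splitting of $A/uA$ via a Hensel-type argument, the characteristic-polynomial identity $N(f)=fg$, and the rank comparison in (7)) are all derived from this finiteness, the proof collapses at this step. A secondary soft spot: $A/fA$ and $R[u]/(N(f))$ need not be projective $R$-modules, so the ``equal ranks'' argument for injectivity in (7) is not available as stated.

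The paper's proof is designed precisely to circumvent this. It first replaces $t$ by $v=tN_{A/R[t]}(f)$, so that $A$ is integral over $R[v]$, $\epsilon(v)=0$, and $v\in fA$; then it lifts the fibrewise elements $\bar s_i$ furnished by Lemma \ref{Lemma3} to a single $s\in A$ with $\epsilon(s)=0$, takes an integral dependence equation $s^{n}+p_1(v)s^{n-1}+\dots+p_n(v)=0$ over $R[v]$, and sets $u=s-rv^{N}$ with $N>\max\{2,\deg p_i\}$ and $r\in k^{\times}$ generic. The choice of $N$ makes $v$, hence $A$, integral and finite over $R[u]$ globally (a Quillen-trick argument), while the correction term $rv^{N}$ lies in every relevant maximal ideal $\mathfrak m^{(i)}_j$ of every closed fibre (because $v\in fA$ and $\epsilon(v)=0$), so the Lemma \ref{Lemma3} conditions for $\bar u_i$ persist and the fibrewise verifications via Lemma \ref{Lemma4} go through as in your outline. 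If you want to salvage your write-up, you must replace the generic-linear-form construction of $u$ by such a construction that secures global finiteness over $R[u]$ before any Nakayama or fibrewise reduction is invoked.
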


\begin{proof}
See \cite[Lemma 4.3]{PSV}.
\end{proof}

\begin{cor}
\label{NormAtZeroAndOne}
Under the hypotheses of Lemma
\ref{Lemma2} let $K$ be the field of fractions of $R$,
$A_K=K \otimes_R A$ and $\epsilon_K=id_K \otimes \epsilon: A_K \to K$.
Consider the inclusion
$K[u] \subset A_K$. Then the norm
$N(f)=N_{A_K/K[u]}(f) \in K[u]$
does not vanish  at the points $1$ and $0$ of
the affine line
$\Aff^1_K$.
\end{cor}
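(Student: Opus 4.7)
The plan is to reduce the $R[u]$-algebra isomorphism
\[
\varphi : R[u]/(N(f)) \;\xrightarrow{\;\sim\;}\; A/(f)
\]
supplied by item~(7) of Lemma~\ref{Lemma2} modulo the ideals $(u)$ and $(u-1)$ of $R[u]$, and to read off the values $N(f)(0)$ and $N(f)(1)$ from the resulting quotients. Because formation of the norm commutes with the base change $R\to K$, we have $N_{A_K/K[u]}(f)=N_{A/R[u]}(f)\otimes_R 1$, so it suffices to prove that $N(f):=N_{A/R[u]}(f)\in R[u]$ does not vanish at $u=0$ or $u=1$ as an element of the domain $R$.

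First I would tensor $\varphi$ with $R=R[u]/(u)$ over $R[u]$. The source becomes $R/(N(f)(0))$, while the target becomes $A/(uA+fA)$. By item~(2), $A/uA\cong A/I\times A/J$, and the first factor is identified with $R$ via $\epsilon$. Hence the image of $f$ in this product is $(\epsilon(f),\,f\bmod J)$, so
\[
A/(uA+fA)\;\cong\;R/(\epsilon(f))\times A/(J+fA).
\]
Item~(3) gives $J+fA=A$, collapsing the second factor. Thus $\varphi$ induces an $R$-algebra isomorphism $R/(N(f)(0))\cong R/(\epsilon(f))$, which forces the equality of ideals $(N(f)(0))=(\epsilon(f))$ in $R$. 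Since $\epsilon(f)\neq 0$ in the domain $R$ by hypothesis, $N(f)(0)\neq 0$ in $R$, hence in $K$.

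For $u=1$ I would apply the same recipe, tensoring $\varphi$ with $R[u]/(u-1)$. The target becomes $A/((u-1)A+fA)$, which is trivial by item~(4). Consequently $R/(N(f)(1))=0$, i.e.\ $N(f)(1)$ is a unit in $R$, and in particular nonzero in $K$.

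The argument is essentially a bookkeeping of quotients; the only step requiring attention is that $\varphi$ is an $R[u]$-algebra morphism, which is immediate from its construction as the composite $R[u]/(N(f))\to A/(N(f))\to A/(f)$ induced by the inclusion $R[u]\subset A$ and by the containment $(N(f))\subset(f)$ coming from item~(5). No genuine obstacle arises.
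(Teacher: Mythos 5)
Your argument is correct, but it takes a different route than the paper. The paper's (one-line) proof never uses item (7) of Lemma \ref{Lemma2}: it evaluates the norm fiberwise, namely $N(f)\bmod (u-1)$ is the norm of the class of $f$ in $A_K/(u-1)A_K$, which is a unit by (4), while $N(f)\bmod (u)$ is the norm of $f$ in $A_K/uA_K\cong K\times A_K/J_K$ (by (2)), hence factors as $\epsilon_K(f)$ times a unit (by (3)); so only (2)--(4) and the base-change/multiplicativity properties of the norm are needed. You instead base-change the isomorphism $\varphi\colon R[u]/(N(f))\to A/(f)$ of item (7) along $R[u]\to R[u]/(u)$ and $R[u]\to R[u]/(u-1)$ and read off the resulting quotients, using (2)--(4) to identify them, together with the fact that $N_{A_K/K[u]}(f)$ is the image of $N_{A/R[u]}(f)$ in $K[u]$ (legitimate, since $A$ is finite projective over $R[u]$ by (1), so the norm commutes with $R\to K$). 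What each buys: the paper's computation is independent of (7), which is the deepest part of Lemma \ref{Lemma2} (it rests on Lemma \ref{Lemma4}(c)); yours is purely formal bookkeeping once (7) is granted, and it even yields slightly more at $u=1$, namely that $N(f)(1)$ is a unit of $R$, not merely nonzero in $K$. Your key deduction that an $R$-algebra isomorphism $R/(N(f)(0))\cong R/(\epsilon(f))$ forces $(N(f)(0))=(\epsilon(f))$ is sound, since any $R$-algebra map between such cyclic quotients is the canonical surjection, and then $\epsilon(f)\neq 0$ in the domain $R$ gives the nonvanishing at $0$.
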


\begin{proof}
The condition $(4)$ of
\ref{Lemma2}
implies that $N(f)$ does not vanish at the point $1$.
Since $\epsilon_K (f) \neq 0 \in K$
the conditions $(2)$ and $(3)$ imply that
$N(f)$ does not vanish at $0$ either.

\end{proof}

%%%%%%%%%%%%%%%%%%%%%%%%%%%%%%%%%%%%%%%%%

\section{Unramified elements}
\label{SectUnramifiedElements}
Let $k$ be an infinite field, $\mathcal O$ be the $k$-algebra from Theorem
\ref{TheoremA}
and $K$ be the fraction field of $\mathcal O$.
Let
$\mu: G\to C$
be the morphism of reductive $\mathcal O$-group schemes from Theorem
\ref{TheoremA}.
We work in this section with {\it the category of commutative $\mathcal O$-algebras}.
For a commutative $\mathcal O$-algebra $S$ set
\begin{equation}
\label{MainFunctor}
{\cal F}(S)=C(S)/\mu(G(S)).
\end{equation}
Let $S$ be an $\mathcal O$-algebra which is a domain and
let $L$ be its fraction field.
Define the {\it subgroup of $S$-unramified elements of $\mathcal F (L)$} as
\begin{equation}
\label{DefnUnramified}
\mathcal F_{nr,S}(L)=
\bigcap_{\mathfrak p \in Spec(S)^{(1)}} Im [ \mathcal F(S_{\mathfrak p})\to\mathcal F(L) ],
\end{equation}
where $Spec(S)^{(1)}$ is the set of hight $1$ prime ideals in $S$.
Obviously the image of $\mathcal F(S)$ in $\mathcal F(L)$ is contained in
$\mathcal F_{nr,S}(L)$. In most cases
$\mathcal F(S_{\mathfrak p})$
injects into
$\mathcal F(L)$
and
$\mathcal F_{nr,S}(L)$
is simply the intersection of all
$\mathcal F(S_{\mathfrak p})$.

For an element $\alpha \in C(S)$ we will write $\bar \alpha$ for its
image in ${\cal F}(S)$. {\it In this section we will write  ${\cal F}$
for the functor
(\ref{MainFunctor}),
the only exception being Lemma
\ref{UnramifiednessLemma}.}
We will repeatedly use  the following result due to Nisnevich.

\begin{thm}[\cite{Ni}] %%%%%% This is due to Nisnevich, the label should be
%%%%%%%%%%%% put as a reference, not as an attribution
\label{NisnevichCor}
Let $S$ be a $\mathcal O$ algebra which is discrete valuation ring with fraction field $L$.
The map
${\cal F}(S) \to {\cal F}(L)$
is injective.
\end{thm}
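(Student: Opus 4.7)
The plan is to reduce the claimed injectivity to the Grothendieck--Serre conjecture for reductive group schemes over a discrete valuation ring, which is precisely Nisnevich's theorem in \cite{Ni}.

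First I would translate the statement: injectivity of $\mathcal F(S) \to \mathcal F(L)$ is the assertion that, given $c \in C(S)$ and $g \in G(L)$ with $\mu(g) = c$ in $C(L)$, one can lift $c$ to $\mu(G(S))$. To realize this as a torsor statement, I would invoke the short exact sequence
\[
1 \to H \to G \xra{\mu} C \to 1
\]
of smooth $\mathcal O$-group schemes. This is exact as a sequence of étale sheaves: $\mu$ is smooth by hypothesis and surjective because its image contains the identity component of the torus $C$ while $H = \ker(\mu)$ is reductive (hence flat, with the expected relative dimension), forcing $\mu$ to be faithfully flat.

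Second, I would pull $\mu$ back along the section $c \colon S \to C$ to obtain
\[
P \;:=\; S \times_{c,\,C,\,\mu} G,
\]
a right $H$-torsor over $S$ for the étale topology (étale-local triviality follows from smoothness of $\mu$). The datum $g \in G(L)$ with $\mu(g) = c$ is exactly an $L$-point of $P$, so the class $[P] \in H^1_{\text{ét}}(S,H)$ lies in the kernel of
\[
H^1_{\text{ét}}(S,H) \to H^1_{\text{ét}}(L,H).
\]

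Third, $S$ is a discrete valuation ring and $H$ is reductive over $S$, so Nisnevich's theorem \cite{Ni} forces $[P] = 1$, i.e.\ $P$ is a trivial $H$-torsor. Any section $g' \in P(S) \subset G(S)$ then satisfies $\mu(g') = c$, proving that the class of $c$ in $\mathcal F(S)$ was trivial to begin with if it became trivial in $\mathcal F(L)$. To obtain injectivity from the case of the trivial class, I would twist: given $c_1, c_2 \in C(S)$ having the same image in $\mathcal F(L)$, apply the argument to $c = c_1 c_2^{-1}$, which is legitimate because $C$ is abelian and the functor $\mathcal F$ is group-valued.

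The only real subtlety is the torsor setup—verifying that $P$ is étale-locally trivial (and not merely fppf-locally) so that Nisnevich's statement applies directly; this is immediate from smoothness of $\mu$, which guarantees that $P \to S$ is smooth and surjective and thus admits sections étale-locally.
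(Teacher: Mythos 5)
Your proof is correct and is essentially the paper's own argument: the fibre torsor $P=S\times_{c,\,C,\,\mu}G=\mu^{-1}(c)$ is exactly the boundary class $\partial(c)$ attached to the \'etale-sheaf exact sequence $1\to H\to G\xra{\mu} C\to 1$, and both arguments conclude by applying Nisnevich's theorem to the reductive kernel $H$ over the discrete valuation ring $S$, using that triviality of this class means $c\in\mu(G(S))$. Your final twisting step is just the standard passage from trivial kernel to injectivity for the homomorphism of abelian groups $C(S)/\mu(G(S))\to C(L)/\mu(G(L))$, which is implicit in the paper's diagram chase as well.
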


\begin{proof}
Let $H$ be the kernel of $\mu$.
Since $\mu$ is smooth and $C$ is a tori, the group scheme sequence
$$1 \to H \to G \to C \to 1$$
gives rise to a short exact sequence of group sheaves in the \'{e}tale topology.
In turn that sequence of sheaves induces a long exact sequence of pointed sets.
So, the boundary map
$\partial: C(S) \to \textrm{H}^1_{\text{\'et}}(S,H)$
fits in a commutative diagram
$$
\CD
C(S)/\mu (G(S)) @>>> C(L)/\mu (G(L)) \\
@VVV @VVV \\
\textrm{H}^1_{\text{\'et}}(S,H) @>>> \textrm{H}^1_{\text{\'et}}(L,H). \\
\endCD
$$
in which the vertical arrows have  trivial kernels.
The bottom arrow has  trivial kernel by a Theorem from
\cite{Ni}, since $H$ is a reductive $\mathcal O$-group scheme.
Thus the top arrow has  trivial kernel too.

\end{proof}

\begin{lem}
\label{BoundaryInj}
Let $\mu: G \to C$ be the above morphism of our reductive group schemes.
Let $H= \ker (\mu)$.
Then for an $\mathcal O$-algebra $L$, where $L$ is a field, the boundary map
$\partial: C(L)/{\mu (G(L))} \to \textrm{H}^1_{\text{\'et}}(L,H)$
is injective.
\end{lem}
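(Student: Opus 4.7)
The plan is to deduce the lemma directly from the long exact cohomology sequence associated to the short exact sequence of $L$-group schemes
$$
1 \to H \to G \xrightarrow{\mu} C \to 1.
$$
The only nontrivial input is that this is indeed an exact sequence of sheaves in the \'etale topology on $\Spec(L)$ (a priori one only has exactness in the fppf topology), and this follows from the hypothesis that $\mu$ is smooth: a smooth surjective morphism of affine $L$-schemes admits \'etale-local sections, so $G \to C$ is an epimorphism of \'etale sheaves of groups.

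First I would invoke this smoothness to conclude that the above sequence is exact as \'etale sheaves on $\Spec(L)$. Then, taking \'etale cohomology produces the long exact sequence of pointed sets
$$
1 \to H(L) \to G(L) \xrightarrow{\mu} C(L) \xrightarrow{\partial} \textrm{H}^1_{\text{\'et}}(L,H) \to \textrm{H}^1_{\text{\'et}}(L,G).
$$
Exactness at $C(L)$ is precisely the statement $\ker(\partial) = \mu(G(L))$. Hence $\partial$ factors through the quotient $C(L)/\mu(G(L))$ as an injective map of pointed sets; since the quotient is an abelian group (because $C$ is a torus), this gives the desired injectivity.

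There is no serious obstacle: the only subtlety is ensuring \'etale (rather than fppf) exactness of the sheaf sequence, which is immediate from the smoothness of $\mu$. One might remark that the fact that $H$ is a reductive $\mathcal O$-group scheme plays no role in this lemma itself (it was only used in Theorem \ref{NisnevichCor} above to identify the kernel of $\textrm{H}^1_{\text{\'et}}(S,H)\to \textrm{H}^1_{\text{\'et}}(L,H)$); here one just needs $H$ to be a group scheme so that $\textrm{H}^1_{\text{\'et}}(L,H)$ makes sense as a pointed set and the long exact sequence of non-abelian cohomology is available.
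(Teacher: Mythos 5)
Your reduction to the long exact sequence breaks down at the decisive step. Exactness of the sequence of pointed sets
$$
G(L) \xrightarrow{\mu} C(L) \xrightarrow{\partial} \textrm{H}^1_{\text{\'et}}(L,H) \to \textrm{H}^1_{\text{\'et}}(L,G)
$$
at $C(L)$ says only that the preimage under $\partial$ of the \emph{distinguished} class is $\mu(G(L))$; it says nothing about the fibres of $\partial$ over non-trivial classes. Since $H=\ker(\mu)$ is a reductive group scheme and in the relevant applications is non-commutative (in Section \ref{SectionGr_SerreConj} it is $G_{der}$, resp.\ a semisimple simply connected group), $\textrm{H}^1_{\text{\'et}}(L,H)$ is merely a pointed set and $\partial$ is not a group homomorphism. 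Hence ``trivial kernel'' does not yield injectivity, and your parenthetical appeal to the fact that $C(L)/\mu(G(L))$ is an abelian group is beside the point: what would be needed is a group structure on the \emph{target} compatible with $\partial$, which is exactly what is missing when $H$ is non-abelian. (If $H$ were central the argument would be fine, but then the lemma would be the easy case.) Even the factorization of $\partial$ through the quotient, which you take for granted, deserves a word: one checks that right translation of $\mu^{-1}(t)$ by a rational point of $G$ does not change its class as a left $H$-torsor.

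The actual content of the lemma is the determination of \emph{all} fibres of $\partial$, not just the fibre over the base point, and this is what the paper proves directly: it identifies $\partial(t)$ with the isomorphism class of the left principal homogeneous $H$-space $H_t=\mu^{-1}(t)$, and then shows by a Galois-descent computation that any isomorphism of left $H$-spaces $\psi\colon H_s\to H_t$ over $L$ is right multiplication by the rational point $u=r^{-1}\psi(r)\in G(L)$, whence $t=s\mu(u)$ and $s,t$ lie in the same coset of $\mu(G(L))$. Equivalently one may invoke the twisting formalism for non-abelian cohomology (Serre, Galois cohomology, I.5.5), which shows that the non-empty fibres of $\partial$ are precisely the orbits of $G(L)$ acting on $C(L)$ through $\mu$ and translation; but some such argument must be supplied, and your proposal does not contain it. Your observation that smoothness of $\mu$ gives exactness of $1\to H\to G\to C\to 1$ as \'etale (not just fppf) sheaves is correct and consistent with the paper's use of the sequence in Theorem \ref{NisnevichCor}, but it is not where the difficulty lies.
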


\begin{proof}
For a $L$-rational point $t \in C$ set
$H_t = \mu^{-1}(t)$. The action by  left multiplication of $H$ on $H_t$
makes $H_t$ into a left principal homogeneous
$H$-space and moreover
$\partial (t) \in \textrm{H}^1_{\text{\'et}}(L,H)$
coincides with the isomorphism class of
$H_t$. Now suppose that $s,t \in C(L)$ are such that
$\partial (s)=\partial (t)$. This means that
$H_t$ and $H_s$ are
isomorphic as  principal homogeneous $H$-spaces.
We must check that for certain $g \in G(L)$ one has
$t=s \mu(g)$.

Let $L^{sep}$ be a separable closure of $L$. Let
$\psi: H_s \to H_t$
be an isomorphism of  left $H$-spaces. For any
$r \in H_s(L^{sep})$
and
$h \in H_s(L^{sep})$
one has
$$
(hr)^{-1}\psi (hr)= r^{-1}h^{-1}h \psi (r)= r^{-1} \psi (r).
$$
Thus for any
$\sigma \in Gal(L^{sep}/L)$
and any
$r \in H_s(L^{sep})$
one has
$$
r^{-1}\psi(r)=(r^{\sigma})^{-1}\psi (r^{\sigma})=(r^{-1}\psi(r))^{\sigma}
$$
which means that the point
$u=r^{-1}\psi(r)$ is a $Gal(L^{sep}/L)$-invariant point of
$G(L^{sep})$. So $u \in G(L)$.
The following relation shows that the $\psi$ coincides with the right multiplication
by $u$. In fact, for any $r \in H_s(L^{sep})$ one has
$\psi (r)= rr^{-1}\psi (r)= ru$. Since $\psi$ is the right multiplication
by $u$ one has $t=s\mu(u)$, which proves the lemma.

\end{proof}

Let $k$, $\mathcal O$ and $K$ be as above in this Section.
Let $\mathcal K$ be a field containing $K$ and
$x: \mathcal K^* \to \mathbb Z$
be a discrete valuation vanishing on $K$.
Let $A_x$ be the valuation ring of $x$. Clearly,
$\mathcal O \subset A_x$.
Let
$\hat A_x$ and $\hat {\mathcal K}_x$
be the completions of $A_x$ and $\mathcal K$ with respect to $x$.
Let
$i:\mathcal K \hookrightarrow \hat {\mathcal K}_x$
be the inclusion. By Theorem
\ref{NisnevichCor}
the map
${\cal F}(\hat A_x)\to {\cal F}(\hat{\mathcal K}_x)$
is injective. We will identify
${\cal F}(\hat A_x)$
with its image under this map. Set
$$
{\cal F}_x(\mathcal K)=i_*^{-1}({\cal F}(\hat A_x)).
$$

The inclusion
$A_x\hookrightarrow \mathcal K$
induces a map
$
{\cal F}(A_x) \to {\cal F}(\mathcal K)
$
which is injective by Lemma
\ref{NisnevichCor}.
So both groups
${\cal F}(A_x)$ and ${\cal F}_x(\mathcal K)$
are subgroups of
${\cal F}(\mathcal K)$.
The following lemma shows that
${\cal F}_x(\mathcal K)$
coincides with the subgroup of $
{\cal F}(\mathcal K)$
consisting of all elements {\it unramified} at $x$.

\begin{lem}
\label{TwoUnramified}
${\cal F}(A_x)={\cal F}_x(\mathcal K)$.
\end{lem}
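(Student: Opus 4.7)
The plan is to prove the two inclusions separately. One direction, namely $\mathcal F(A_x)\subseteq \mathcal F_x(\mathcal K)$, is immediate from functoriality of $\mathcal F$: the composition $A_x\to \mathcal K\xra{i}\hat{\mathcal K}_x$ factors through $\hat A_x$, so every class of $\mathcal F(A_x)$, viewed in $\mathcal F(\mathcal K)$ and then pushed forward to $\mathcal F(\hat{\mathcal K}_x)$, evidently lies in the subgroup $\mathcal F(\hat A_x)$.

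For the nontrivial inclusion $\mathcal F_x(\mathcal K)\subseteq \mathcal F(A_x)$, I would start from an element $\bar\alpha\in\mathcal F_x(\mathcal K)$ represented by $\alpha\in C(\mathcal K)$. Using that $\mathcal F(\hat A_x)\hookrightarrow \mathcal F(\hat{\mathcal K}_x)$ by Theorem \ref{NisnevichCor}, the defining condition of $\mathcal F_x(\mathcal K)$ produces $\beta\in C(\hat A_x)$ and $\hat g\in G(\hat{\mathcal K}_x)$ with
$$i(\alpha)\;=\;\beta\cdot \mu(\hat g)\quad\text{in }C(\hat{\mathcal K}_x).$$
The strategy is then to replace $\hat g$ by a sufficiently close $\mathcal K$-rational approximation, so that the ``correction'' between the two lies in $G(\hat A_x)$ and can be absorbed into $\beta$.

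The main step, and the principal technical obstacle, is a weak-approximation input for the smooth affine $\mathcal O$-group scheme $G$. A $\mathcal K$-rational \'etale chart of $G$ around $\hat g$, together with the non-archimedean implicit function theorem applied over the complete field $\hat{\mathcal K}_x$, shows that $G(\mathcal K)$ is dense in $G(\hat{\mathcal K}_x)$ in the $x$-adic topology. Since $G(\hat A_x)$ is an open neighborhood of the identity in $G(\hat{\mathcal K}_x)$, this lets me choose $g\in G(\mathcal K)$ close enough to $\hat g$ that $h:=\hat g g^{-1}$ belongs to $G(\hat A_x)$.

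Given such a $g$, I set $\gamma:=\alpha\cdot \mu(g)^{-1}\in C(\mathcal K)$. Then $\bar\gamma=\bar\alpha$ in $\mathcal F(\mathcal K)$, while
$$i(\gamma)\;=\;\beta\cdot \mu(h)\;\in\; C(\hat A_x),$$
because $\beta\in C(\hat A_x)$ and $\mu(h)\in \mu(G(\hat A_x))\subseteq C(\hat A_x)$. Since $C$ is affine over $\mathcal O$ and the equality $A_x=\mathcal K\cap \hat A_x$ holds inside $\hat{\mathcal K}_x$, a coordinate-wise comparison on any closed embedding $C\hra \mathbf A^N_{\mathcal O}$ forces $\gamma\in C(A_x)$. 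Therefore $\bar\alpha=\bar\gamma$ lies in the image of $\mathcal F(A_x)\to \mathcal F(\mathcal K)$, which finishes the argument.
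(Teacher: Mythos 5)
The easy inclusion $\mathcal F(A_x)\subseteq\mathcal F_x(\mathcal K)$ and the final descent step ($\gamma\in C(\mathcal K)$ with $i(\gamma)\in C(\hat A_x)$ forces $\gamma\in C(A_x)$, since $C$ is affine and $A_x=\mathcal K\cap\hat A_x$) are fine. The genuine gap is your ``main step'': the claim that $G(\mathcal K)$ is dense in $G(\hat{\mathcal K}_x)$. The non-archimedean implicit function theorem only describes $G(\hat{\mathcal K}_x)$ locally as an analytic manifold over the complete field; it produces nearby $\hat{\mathcal K}_x$-points, not nearby $\mathcal K$-rational points, so it cannot yield the approximation of $\hat g$ by elements of $G(\mathcal K)$. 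What you are invoking is a weak-approximation property of $G$ at the place $x$, which is not automatic for reductive groups (already for tori weak approximation can fail), and no argument for it is given. In fact, what your construction really requires is exactly the decomposition $\hat g\in G(\hat A_x)\cdot G(\mathcal K)$ inside $G(\hat{\mathcal K}_x)$; by the standard patching of $G$-torsors over the DVR $A_x$ along $\hat A_x$ and $\mathcal K=A_x[1/t]$ (with $\hat A_x\otimes_{A_x}\mathcal K=\hat{\mathcal K}_x$), this decomposition is equivalent to the triviality of the glued torsor, i.e.\ to the statement that $\textrm{H}^1_{\text{\'et}}(A_x,G)\to\textrm{H}^1_{\text{\'et}}(\mathcal K,G)$ has trivial kernel. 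So the step you treat as an approximation exercise is essentially the content of Nisnevich's theorem \cite{Ni}, and without citing it (or proving a substitute) the argument does not close.

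For comparison, the paper's proof avoids any approximation: it applies the boundary map $\partial$ for $1\to H\to G\to C\to 1$, obtains classes $\xi=\partial(a)\in\textrm{H}^1_{\text{\'et}}(\mathcal K,H)$ and $\hat\xi=\partial(\hat a)\in\textrm{H}^1_{\text{\'et}}(\hat A_x,H)$ agreeing over $\hat{\mathcal K}_x$, glues them to $\xi_x\in\textrm{H}^1_{\text{\'et}}(A_x,H)$, kills its image in $\textrm{H}^1_{\text{\'et}}(A_x,G)$ by Nisnevich's theorem (the image over $\mathcal K$ is trivial because $\xi$ is a boundary), lifts $\xi_x$ to an element $a'_x\in\mathcal F(A_x)$, and identifies $a'_x$ with $a$ using the injectivity of $\partial$ over fields (Lemma \ref{BoundaryInj}). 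If you want to keep your group-element formulation, you must either prove the double-coset statement $G(\hat{\mathcal K}_x)=G(\hat A_x)\cdot G(\mathcal K)$ via torsor patching plus Theorem \ref{NisnevichCor}/\cite{Ni}, or switch to the cohomological gluing as in the paper.
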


\begin{proof}
We only have to check the inclusion
$
{\cal F}_x(\mathcal K) \subseteq {\cal F}(A_x)
$.
Let
$
a_x \in {\cal F}_x(\mathcal K)
$
be an element. It determines the elements
$
a \in {\cal F}(\mathcal K)
$
and
$
\hat a \in {\cal F}(\hat A_x)
$
which coincide when regarded as elements of
$
{\cal F}(\hat{\mathcal K}_x)
$.
We denote this common element in
$
{\cal F}(\hat{\mathcal K}_x)
$
 by
$
\hat a_x
$.
Let $H=\ker (\mu)$ and let
$\partial: C(-) \to \textrm{H}^1_{\text{\'et}}(-,H)$
be the boundary map.

Let
$
\xi=\partial(a)\in \textrm{H}^1_{\text{\'et}}(\mathcal K,H)
$,
$
\hat \xi=\partial(\hat a)\in \textrm{H}^1_{\text{\'et}}(\hat A_x,H)
$
and
$\hat \xi_x=\partial(\hat a_x)\in \textrm{H}^1_{\text{\'et}}(\hat {\mathcal K_x},H)$
Clearly,
$
\hat \xi
$
and
$
\xi
$
both coincide with
$
\hat \xi_x
$
when regarded as elements of
$
\textrm{H}^1_{\text{\'et}}(\hat {\mathcal K_x},H)
$.
Thus one can glue
$
\xi
$
and
$
\hat \xi
$
to get a
$
\xi_x \in \textrm{H}^1_{\text{\'et}}(A_x,H)
$
which maps to
$
\xi
$
under the map induced by the inclusion
$
A_x \hookrightarrow \mathcal K
$
and maps to
$
\hat \xi
$
under the map induced by the inclusion
$
A_x \hookrightarrow \hat A_x
$.

We now show that
$
\xi _x
$
has the form
$
\partial (a_x^\prime)
$
for a certain
$
a_x^\prime \in {\cal F}(A_x)
$.
In fact, observe that the image
$
\zeta
$
of
$
\xi
$
in
$
\textrm{H}^1_{\text{\'et}}(\mathcal K, G)
$
is  trivial.
By Theorem
\cite{Ni}
the map
$$
\textrm{H}^1_{\text{\'et}}(A_x, G) \to \textrm{H}^1_{\text{\'et}}(\mathcal K,G)
$$
has  trivial kernel. Therefore the image
$
\zeta_x
$
of
$
\xi_x
$
in
$
\textrm{H}^1_{\text{\'et}}(A_x, G)
$
is trivial. Thus there exists an element
$
a_x^\prime \in {\cal F}(A_x)
$
with
$
\partial(a_x^\prime)=\xi_x \in \textrm{H}^1_{\text{\'et}}(A_x, H).
$

We now prove that
$
a_x^\prime
$
coincides with
$
a_x
$
in
$
{\cal F}_x(\mathcal K)
$.
Since
$
{\cal F}(A_x)
$
and
$
{\cal F}_x(\mathcal K)
$
are both subgroups of
$
{\cal F}(\mathcal K)
$,
it suffices to show that
$
a_x^\prime
$
coincides with the element
$
a
$
in
$
{\cal F}( \mathcal K)
$.
By Lemma
\ref{BoundaryInj}
the map
\begin{equation}
\label{BoundaryInjFormulae}
{\cal F}(\mathcal K)\xrightarrow{\partial} \textrm{H}^1_{\text{\'et}}(\mathcal K, H)
\end{equation}
is \underbar {injective}. Thus it suffices to check that
$
\partial(a_x^\prime)=\partial(a)
$
in
$
\textrm{H}^1_{\text{\'et}}(\mathcal K, H)
$.
This is indeed the case because
$
\partial(a_x^\prime)=\xi_x
$
and
$
\partial(a)=\xi
$,
and
$
\xi_x
$
coincides with
$
\xi
$
when regarded over
$
\mathcal K
$.
We have proved that
$
a_x^\prime
$
coincides with
$
a_x
$
in
$
{\cal F}_x(\mathcal K)
$.
Thus the inclusion
$
{\cal F}_x(\mathcal K) \subseteq {\cal F}(A_x)
$
is proved, whence the lemma.

\end{proof}

Let $k$, $\mathcal O$ and $K$ be as above in this Section.
\begin{lem}
\label{KeyUnramifiedness}
Let $A \subset B$ be a finite extension of Dedekind $K$-algebras.
Let $0 \neq f \in B$ be such that $B/fB$ is finite over $K$ and reduced.

Suppose $N_{B/A}(f)=fg \in B$ for a certain $g \in B$ coprime
with $f$. Suppose the composite map
$A/N(f)A \to B/N(f)B \to B/fB$
is an isomorphism.
Let $F$ and $E$ be the field of fractions of $A$ and $B$ respectively.
Let $\beta \in C(B_f)$ be such that
$\bar \beta \in {\cal F}(E)$
is $B$-unramified. Then, for
$\alpha= N_{E/F}(\beta)$,
the class
$\bar \alpha \in {\cal F}(F)$
is $A$-unramified.
\end{lem}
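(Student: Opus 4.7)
The plan is to verify that for every height $1$ prime $\mathfrak q$ of $A$, the image of $\bar\alpha$ lies in $\text{Im}[\mathcal F(A_\mathfrak q)\to\mathcal F(F)]$. By Lemma \ref{TwoUnramified} together with Theorem \ref{NisnevichCor}, this is equivalent to showing that the image of $\bar\alpha$ in $\mathcal F(\hat F_\mathfrak q)$ lifts to $\mathcal F(\hat A_\mathfrak q)$. I split into two cases according to whether or not $\mathfrak q$ contains $N(f)$.

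If $N(f)\notin\mathfrak q$, then every prime $\mathfrak p$ of $B$ lying over $\mathfrak q$ must fail to contain $f$ (otherwise $N(f)\in\mathfrak p\cap A=\mathfrak q$). Hence $f$ is a unit in $B\otimes_A A_\mathfrak q$, the canonical map $B_f\to B\otimes_A A_\mathfrak q$ is defined, and $\beta$ determines an element of $C(B\otimes_A A_\mathfrak q)$. The norm $N_{B\otimes_A A_\mathfrak q/A_\mathfrak q}(\beta)$ then lies in $C(A_\mathfrak q)$ and agrees with $\alpha$ inside $C(F)$ by base change (Property (1) of Section \ref{SectNorms}). This case is immediate.

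Now suppose $\mathfrak q\supset N(f)$. The isomorphism $A/N(f)A\cong B/fB$ rigidly controls the geometry above $\mathfrak q$: there is a unique prime $\mathfrak p^+$ of $B$ lying over $\mathfrak q$ that contains $f$; the remaining primes $\mathfrak p_2,\dots,\mathfrak p_r$ over $\mathfrak q$ do not meet $f$; the residue extension at $\mathfrak p^+$ is trivial; and the identities $v_{\mathfrak p^+}(N(f))=e(\mathfrak p^+/\mathfrak q)\cdot v_\mathfrak q(N(f))$ combined with $v_{\mathfrak p^+}(N(f))=v_{\mathfrak p^+}(f)$ (from $g\notin\mathfrak p^+$) and the norm-valuation formula $v_\mathfrak q(N(f))=v_{\mathfrak p^+}(f)$ (using the trivial residue degree) force the ramification index $e(\mathfrak p^+/\mathfrak q)$ to be $1$. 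Consequently $\hat B_{\mathfrak p^+}=\hat A_\mathfrak q$ and $\hat E_{\mathfrak p^+}=\hat F_\mathfrak q$. Using the product decomposition $\hat A_\mathfrak q\otimes_A B=\prod_i\hat B_{\mathfrak p_i}$, I assemble $\tilde\beta\in C(\hat A_\mathfrak q\otimes_A B)$ componentwise: at each $\mathfrak p_i$ with $i\neq +$, set $\tilde\beta_i$ equal to the image of $\beta$ (well-defined since $f$ is a unit there); at $\mathfrak p^+$, use the $B$-unramifiedness of $\bar\beta$ to pick $\beta_0\in C(B_{\mathfrak p^+})\subset C(\hat B_{\mathfrak p^+})=C(\hat A_\mathfrak q)$ with $\bar\beta_0=\bar\beta$ in $\mathcal F(E)$, and put $\tilde\beta^+=\beta_0$.

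Let $\tilde\alpha:=N_{\hat A_\mathfrak q\otimes_A B/\hat A_\mathfrak q}(\tilde\beta)\in C(\hat A_\mathfrak q)$. Base change and multiplicativity of the norm, together with the fact that the component norm $N_{\hat B_{\mathfrak p^+}/\hat A_\mathfrak q}$ is the identity (since the extension is trivial), give that the ratio $\tilde\alpha/\alpha$ in $C(\hat F_\mathfrak q)$ equals precisely $\beta_0/\beta$. This ratio lies in $\mu(G(\hat F_\mathfrak q))$ by the choice of $\beta_0$, so $\bar{\tilde\alpha}=\bar\alpha$ in $\mathcal F(\hat F_\mathfrak q)$, completing the proof. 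The main subtlety that could derail the argument is the apparent need for a norm principle $N(\mu(h))\in\mu(G)$ for $\mu\colon G\to C$: a naive attempt at a prime where $\mathfrak p^+/\mathfrak q$ ramified nontrivially would reduce exactly to asking whether $N_{\hat E_{\mathfrak p^+}/\hat F_\mathfrak q}(\mu(h))\in\mu(G(\hat F_\mathfrak q))$ for some $h\in G(\hat E_{\mathfrak p^+})$. The hypotheses $N(f)=fg$ with $(f,g)=1$ and $A/N(f)A\cong B/fB$ are engineered exactly to make this extension trivial, thereby sidestepping the norm principle --- consistent with the authors' declared philosophy.
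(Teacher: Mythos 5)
Your argument is correct and follows essentially the same route as the paper's proof: reduce to the primes dividing $N(f)$, decompose $B\otimes_A\hat A_{\mathfrak q}$ into a product of completions, identify the distinguished factor $\hat B_{\mathfrak p^+}$ with $\hat A_{\mathfrak q}$ via the hypotheses $N(f)=fg$ with $(f,g)=1$ and $A/N(f)A\cong B/fB$, absorb the $\mu(G)$-ambiguity of $\beta$ at that factor (where the norm is the identity), and conclude with Lemma \ref{TwoUnramified}. The only differences are cosmetic: you verify $e=f=1$ explicitly where the paper simply asserts $\hat B_{\mathfrak q_1}=\hat A_{\mathfrak p}$, and you phrase the final computation as the comparison $\tilde\alpha/\alpha=\beta_0/\beta$ instead of writing $\beta_1=\mu(\gamma_1)\beta_1'$ and factoring $\mu(\gamma_1)$ out of the product of norms.
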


\begin{proof}
The only primes at which $\bar \alpha$ could be ramified are those which divide
$N(f)$. Let $\mathfrak p$ be one of them. Check that $\bar \alpha$ is
unramified at $\mathfrak p$.

To do this we consider all primes
$\mathfrak q_1, \mathfrak q_2, \dots, \mathfrak q_n$
in $B$ lying over
$\mathfrak p$.
Let
$\mathfrak q_1$
be the unique prime dividing $f$ and lying over
$\mathfrak p$.
Then
$$
\hat B_{\mathfrak p} = \hat B_{\mathfrak q_1} \times \prod_{i \neq 1} \hat B_{\mathfrak q_i}
$$
with
$\hat B_{\mathfrak q_1}=\hat A_{\mathfrak p}$.
If $F$, $E$ are the fields of fractions of $A$ and $B$
then
$$
E \otimes \hat F_{\mathfrak p}=\hat E_{\mathfrak q_1} \times \dots \times \hat E_{\mathfrak q_n}
$$
and
$\hat E_{\mathfrak q_1}=\hat F_{\mathfrak p}$.
We will write
$\hat E_i$ for $\hat E_{\mathfrak q_i}$
and
$\hat B_i$ for $\hat B_{\mathfrak q_i}$.
Let
$\beta \otimes 1= (\beta_1,\dots, \beta_n)
\in C(\hat E_1) \times \dots \times  C(\hat E_n)$.
Clearly for $i \geq 2$
$\beta_i \in C(\hat B_i)$
and
$\beta_1=\mu(\gamma_1)\beta^{\prime}_1$
with
$\beta^{\prime}_1 \in C(\hat B_1)=C(\hat A_{\mathfrak p})$
and
$\gamma_1 \in G(\hat E_1)=G(\hat F_{\mathfrak p})$.
Now
$\alpha \otimes 1 \in C(\hat F_{\mathfrak p})$
coincides with the product
$$
\beta_1N_{\hat L_2/\hat K_{\mathfrak p}}(\beta_2)\cdots N_{\hat L_n/\hat K_{\mathfrak p}}(\beta_n)=
\mu(\gamma_1)
[\beta^{\prime}_1N_{\hat L_2/\hat K_{\mathfrak p}}(\beta_2)\cdots N_{\hat L_n/\hat K_{\mathfrak p}}(\beta_n)].
$$
Thus
$\overline {\alpha \otimes 1}=\bar \beta^{\prime}_1\overline {N_{\hat L_2/\hat K_{\mathfrak p}}(\beta_2)}
\cdots \overline {N_{\hat L_n/\hat K_{\mathfrak p}}(\beta_n)} \in {\cal F}(\hat A_{\mathfrak p})$.
Let
$i: F \hra \hat F_{\mathfrak p}$
be the inclusion and
$i_*:{\cal F}(F) \to {\cal F}(\hat F_{\mathfrak p})$
be the induced map.
Clearly
$i_*(\bar \alpha)=\overline {\alpha \otimes 1}$
in
${\cal F}(\hat F_{\mathfrak p})$.
Now
Lemma
\ref{TwoUnramified}
shows that
the element
$\bar \alpha \in {\cal F}(F)$
belongs to
${\cal F}(A_{\mathfrak p})$.
Hence $\bar \alpha$ is $A$-unramified.

\end{proof}

\begin{lem}[Unramifiedness Lemma]
\label{UnramifiednessLemma}
Let $R$ be a commutative ring.
Let  $\mathcal F$ be a covariant functor from the category of commutative
$R$-algebras to the category of abelian groups.
Let $S^{\prime}$ and $R^{\prime}$ be two $R$-algebras which are noetherian domains
with  fields of fractions
$K^{\prime}$ and $L^{\prime}$
respectively.
Let
$
S^{\prime} \xrightarrow{i} R^{\prime}
$
be an injective flat $R$-algebra
homo\-morph\-ism of finite type and
let
$
j :K^{\prime} \to L^{\prime}
$
be the induced inclusion of the field of fractions.
Then for each localization
$R^{\prime\prime} \supset R^{\prime}$ of $R^{\prime}$
the map
$$
j _*:{\cal F}(K^{\prime}) \to {\cal F}(L^{\prime})
$$
takes
$S^{\prime}$-unramified elements to
$R^{\prime\prime}$-unramified elements.
\end{lem}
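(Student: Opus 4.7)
Fix a height $1$ prime $\mathfrak{q}$ of $R''$; we must exhibit an element of $\mathcal{F}(R''_{\mathfrak{q}})$ whose image in $\mathcal{F}(L')$ equals $j_*(a)$. Under the bijection between primes of $R''$ and primes of $R'$ disjoint from the multiplicative set used to form $R''$, $\mathfrak{q}$ corresponds to a prime $\mathfrak{q}'\subset R'$, and the universal property of localization gives the canonical identification $R''_{\mathfrak{q}}=R'_{\mathfrak{q}'}$. Comparing Krull dimensions of these two identical local rings forces $\text{ht}_{R'}(\mathfrak{q}')=\dim R'_{\mathfrak{q}'}=\text{ht}_{R''}(\mathfrak{q})=1$.

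Now set $\mathfrak{p}:=i^{-1}(\mathfrak{q}')\subset S'$. Since $i\colon S'\to R'$ is flat, going-down holds, so any strict chain of primes of $S'$ ending at $\mathfrak{p}$ can be lifted to a strict chain of primes of $R'$ ending at $\mathfrak{q}'$. This yields the height bound $\text{ht}_{S'}(\mathfrak{p})\leq\text{ht}_{R'}(\mathfrak{q}')=1$, so $\mathfrak{p}$ is either the zero ideal of the domain $S'$ or a height $1$ prime.

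In either situation, the map $i$ induces a well-defined ring homomorphism $S'_{\mathfrak{p}}\to R'_{\mathfrak{q}'}$, because every $s\in S'\setminus\mathfrak{p}$ has $i(s)\notin\mathfrak{q}'$ and therefore becomes a unit in $R'_{\mathfrak{q}'}$; passing to fraction fields, this map is compatible with the inclusion $j\colon K'\hookrightarrow L'$. If $\text{ht}_{S'}(\mathfrak{p})=1$, the hypothesis that $a$ is $S'$-unramified furnishes an element $\tilde{a}\in\mathcal{F}(S'_{\mathfrak{p}})$ whose image in $\mathcal{F}(K')$ is $a$; functoriality applied to the composite $S'_{\mathfrak{p}}\to R'_{\mathfrak{q}'}\to L'$ then places $j_*(a)$ in the image of $\mathcal{F}(R'_{\mathfrak{q}'})=\mathcal{F}(R''_{\mathfrak{q}})$. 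If instead $\mathfrak{p}=(0)$, then $S'_{\mathfrak{p}}=K'$ and $a$ itself plays the role of $\tilde{a}$, so the same diagram chase applies. Since $\mathfrak{q}$ was an arbitrary height $1$ prime of $R''$, the element $j_*(a)$ is $R''$-unramified.

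The argument is essentially bookkeeping in commutative algebra; the one delicate ingredient is the height estimate $\text{ht}_{S'}(\mathfrak{p})\leq 1$, for which flatness of $i$ is indispensable (the noetherian and finite-type hypotheses on $i$ do not actually intervene in this particular proof, but they presumably are invoked in the applications). No properties of $\mathcal{F}$ beyond functoriality are used.
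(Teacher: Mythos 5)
Your proof is correct and follows essentially the same route as the paper's: contract the height one prime of $R''$ back to $R'$ and then to $S'$, bound $\text{ht}(\mathfrak p)$ by $1$ using flatness, and conclude by functoriality via the square relating $\mathcal F(S'_{\mathfrak p})$, $\mathcal F(K')$, $\mathcal F(R''_{\mathfrak q})$ and $\mathcal F(L')$. The only differences are cosmetic: you justify the height inequality by going-down (flatness alone, where the paper cites ``flat of finite type'') and you spell out the $\mathfrak p=(0)$ case, which the paper leaves implicit in its diagram.
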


\begin{proof}
Let
$v \in {\cal F}(K^{\prime})$
be an
$S^{\prime}$-unramified element
and let
$\mathfrak r$
be  height $1$ primes of $R^{\prime\prime}$. Then
$\mathfrak q = R^{\prime} \cap \mathfrak r$
is a height $1$ prime of $R^{\prime}$. Let
$\mathfrak p = S^{\prime} \cap \mathfrak q$.
Since the $S^{\prime}$-algebra $R^{\prime}$ is flat of finite type one has
$\text{ht}(\mathfrak q) \geq \text{ht}(\mathfrak p)$. Thus
$\text{ht}(\mathfrak p)$
is $1$ or $0$. The commutative diagram
$$
\CD
{\cal F}(K^{\prime}) @>{j_*}>> {\cal F}(L^{\prime}) \\
@AAA @AAA \\
{\cal F}(S^{\prime}_{\mathfrak p}) @>>> {\cal F}(R^{\prime\prime}_{\mathfrak r}) \\
\endCD
$$
shows that the class
$j_* (v)$
is in the image of
${\cal F}(R^{\prime\prime}_{\mathfrak r})$ and
hence the class
$j_* (v) \in {\cal F}(L^{\prime})$
is $R^{\prime\prime}$-unramified.

\end{proof}

\section{Specialization maps}
\label{SectSpecializationMaps}
Let $k$ be an infinite field, $\mathcal O$ be the $k$-algebra from Theorem
\ref{TheoremA}
and $K$ be the fraction field of $\mathcal O$.
Let
$\mu: G\to C$
be the morphism of reductive $\mathcal O$-group schemes from Theorem
\ref{TheoremA}.
We work in this section with {\it the category of commutative $K$-algebras} and with
the functor
\begin{equation}
\label{MainFunctor2}
\mathcal F: S\mapsto C(S)/\mu(G(S))
\end{equation}
defined on the category of $K$-algebras. So, we assume in this Section that
each ring from this Section is equipped with a distinguished $K$-algebra structure and
each ring homomorphism from this Section respects that structures.
Let $S$ be an $K$-algebra which is a domain and
let $L$ be its fraction field.
Define the {\it subgroup of $S$-unramified elements $\mathcal F_{nr,S}(L)$ of $\mathcal F (L)$} by formulae
(\ref{DefnUnramified}).

For a regular domain $S$ with the fraction field $\cal K$
and each height one prime $\mathfrak p$ in $S$
we construct {\bf specialization maps}
$s_{\mathfrak p}: {\cal F}_{nr, S}({\cal K}) \to {\cal F} {(K(\mathfrak p))}$,
where $\cal K$ is the field of fractions of $S$ and
$K(\mathfrak p)$
is the residue field of
$R$ at the prime $\mathfrak p$.

\begin{defn}
\label{SpecializationDef}
Let
$Ev_{\mathfrak p}: C(S_{\mathfrak p}) \to C(K(\mathfrak p))$
and
$ev_{\mathfrak p}: {\cal F}(S_{\mathfrak p}) \to {\cal F}(K(\mathfrak p))$
be the maps induced by the canonical $K$-algebra homomorphism
$S_{\mathfrak p} \to K(\mathfrak p)$.
Define a homomorphism
$s_{\mathfrak p}: {\cal F}_{nr, S}({\cal K}) \to {\cal F} {(K(\mathfrak p))}$
by
$s_{\mathfrak p}(\alpha)= ev_{\mathfrak p}(\tilde \alpha)$,
where
$\tilde \alpha$
is a lift of $\alpha$ to
${\cal F}(S_{\mathfrak p})$.
Theorem
\ref{NisnevichCor}
shows that the map $s_{\mathfrak p}$ is well-defined.
It is called the specialization map. The map $ev_{\mathfrak p}$ is called the evaluation
map at the prime $\mathfrak p$.

Obviously for
$\alpha \in C(R_\mathfrak p)$
one has
$s_{\mathfrak p}(\bar \alpha)=\overline {Ev_{\mathfrak p}(\alpha)}
\in {\cal F}(K(\mathfrak p))$.
\end{defn}

\begin{lem}[\cite{H}]
\label{Harder}
Let $H^{\prime}$ be a smooth linear algebraic group over the field $K$.
Let $S$ be a $K$-algebra which is a Dedekind domain with field of fractions $\cal K$.
If $\xi \in \textrm{H}^1_{\text{\'et}}({\cal K},H^{\prime})$ is an $S$-unramified element for the functor
$\textrm{H}^1_{\text{\'et}}(-,H^{\prime})$
(see (\ref{DefnUnramified}) for the Definition), then $\xi$ can be lifted to an element of
$\textrm{H}^1_{\text{\'et}}(S,H^{\prime})$.
\end{lem}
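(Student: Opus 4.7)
The strategy is spreading out plus Zariski patching on the Dedekind scheme $\Spec(S)$. Represent $\xi$ by an actual $H^\prime$-torsor $P$ over $\Spec(\mathcal K)$. Since $H^\prime$ is of finite type over $K$ and $S$ is noetherian, $P$ extends to an $H^\prime$-torsor $P_U$ on some non-empty open subset $U \subseteq \Spec(S)$, whose complement consists of finitely many closed points $x_1,\dots,x_n$.

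I would enlarge $U$ one closed point at a time. Fix $x = x_1$, let $\mathfrak p$ be the corresponding prime, and set $\mathcal O := S_{\mathfrak p}$. By the $S$-unramifiedness hypothesis, there exists an $H^\prime$-torsor $Q$ over $\Spec(\mathcal O)$ whose generic fibre represents $\xi$; pick an isomorphism $\phi : Q \otimes_{\mathcal O} \mathcal K \xra{\sim} P_U \otimes \mathcal K$ of $H^\prime$-torsors over $\mathcal K$.

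Next, apply the standard limit theorems (EGA IV$_3$, $\S 8$) to spread out both $Q$ and $\phi$: since $\mathcal O$ is the filtered colimit of the coordinate rings of open affine neighbourhoods $V$ of $x$ in $\Spec(S)$, and $Q$ is finitely presented over $\mathcal O$, one obtains a neighbourhood $V \ni x$ and an $H^\prime$-torsor $\tilde Q$ on $V$ restricting to $Q$ (the torsor property is open, so $V$ can be shrunk to enforce it). Similarly $\phi$ descends, after further shrinking $V$ around $x$, to an isomorphism $\tilde\phi : \tilde Q|_{V \cap U} \xra{\sim} P_U|_{V \cap U}$ — the isomorphism locus of any chosen spreading-out of $\phi$ is open in $V \cap U$ and contains the generic point, so after removing its complement from $V$ we may assume it is all of $V \cap U$ while still having $x \in V$. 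Zariski-gluing $P_U$ and $\tilde Q$ along $\tilde\phi$ yields an $H^\prime$-torsor on the strictly larger open $U \cup V$, which now contains $x$. Iterating over $x_2,\dots,x_n$ produces the desired class in $\textrm{H}^1_{\text{\'et}}(S, H^\prime)$ whose image in $\textrm{H}^1_{\text{\'et}}(\mathcal K, H^\prime)$ is $\xi$.

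The main obstacle is the spreading-out of the isomorphism $\phi$: it is here that smoothness and finite-type of $H^\prime$ (and hence finite presentation of $Q$ and $P_U$ as $S$-schemes) enter crucially, guaranteeing that morphisms defined at a stalk actually extend over a Zariski neighbourhood. Once this is granted, the remainder is routine Zariski patching on the one-dimensional regular scheme $\Spec(S)$.
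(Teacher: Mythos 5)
Your argument is correct and is exactly the standard patching/glueing that the paper invokes with its one-word proof (``Patching''), here spelled out via spreading out the generic torsor to an open $U\subseteq\Spec(S)$, using $S$-unramifiedness at each of the finitely many missing closed points, and Zariski-glueing after shrinking so that the generic isomorphism extends over the overlap. So it is the same approach as the paper's, just written in full detail.
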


\begin{proof}
Patching.

\end{proof}

\begin{thm}[\cite{C-TO}, Prop.2.2]
\label{C-TOthm}
Let $G^{\prime}=G_K$, where $G$ is the reductive $\mathcal O$-group scheme from this Section
(it is connected and even geometrically connected, since we follow
\cite[Exp.~XIX, Defn.2.7]{D-G}). Then
$$ker[\textrm{H}^1_{\text{\'et}}({K}[t],G^{\prime}) \to \textrm{H}^1_{\text{\'et}}({K}(t),G^{\prime})]=* \ .$$
\end{thm}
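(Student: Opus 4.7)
The plan is to reduce the question to a local triviality problem on discrete valuation rings, where Nisnevich's theorem applies, and then to patch the local trivializations into a global one using the connectedness of $G'$. Let $\xi \in H^1_{\text{\'et}}(K[t],G')$ have trivial image in $H^1_{\text{\'et}}(K(t),G')$, and represent $\xi$ by a $G'$-torsor $P\to \Aff^1_K$. A trivialization of $P$ over $\Spec(K(t))$ spreads out to a dense open $U\subset \Aff^1_K$ whose complement is a finite set of closed points $\{x_1,\dots,x_n\}$; fix a section $\sigma\in P(U)$. At each $x_i$ the local ring $\mathcal{O}_i:=\mathcal{O}_{\Aff^1_K,x_i}$ is a DVR with fraction field $K(t)$, and $G'$ is reductive over $\mathcal{O}_i$ by base change. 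Nisnevich's theorem \cite{Ni} asserts that $H^1_{\text{\'et}}(\mathcal{O}_i,G')\to H^1_{\text{\'et}}(K(t),G')$ has trivial kernel, so $P|_{\Spec(\mathcal{O}_i)}$ admits a section $\tau_i$.

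Comparing $\sigma$ and $\tau_i$ on the generic fibre $\Spec(K(t))$ yields elements $g_i\in G'(K(t))$ with $\tau_i=\sigma\cdot g_i$. To construct a global section of $P$ it would suffice to find $g\in G'(K[t])$ satisfying $g_i^{-1}g\in G'(\mathcal{O}_i)$ for every $i$: the modified section $\sigma\cdot g$ would then agree over $\Spec(K(t))$ with $\tau_i\cdot(g_i^{-1}g)$, which is defined over $\Spec(\mathcal{O}_i)$, so it extends across each $x_i$ and trivializes $\xi$.

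The main obstacle is the existence of such a $g$. This is a strong approximation statement for the connected reductive group $G'$ with respect to the finite collection of places $\{x_1,\dots,x_n\}$ on $\Aff^1_K$: one needs the image of $G'(K[t])$ to be dense in $\prod_i G'(K(t)^h_{x_i})/G'(\mathcal{O}_i^h)$, where the superscript $h$ denotes henselization. The argument relies essentially on $G'$ being connected, which rules out the $\pi_0$-obstructions that would otherwise prevent simultaneous gluing at the bad points, and on the smoothness of $G'$ so that Hensel's lemma allows one to lift approximate solutions from the residue fields at the $x_i$ to the henselian local rings $\mathcal{O}_i^h$. The detailed execution of this approximation step is the content of the Colliot-Th\'el\`ene--Ojanguren argument cited in \cite{C-TO}.
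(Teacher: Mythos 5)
Your reduction is set up correctly up to a point: spreading the generic trivialization out to a dense open $U\subset\mathbb A^1_K$, invoking Nisnevich's theorem at the local rings $\mathcal O_i$ of the finitely many missing closed points, and observing that a global section would follow from an element $g$ with $g_i^{-1}g\in G'(\mathcal O_i)$ for all $i$ — all of that is fine. The gap is the ``approximation step'', which is exactly where the whole content of the theorem sits, and as you state it it is false. First, demanding $g\in G'(K[t])$ is too strong: already for $G'=\mathbb G_{m}$ one has $G'(K[t])=K^{\times}$, whose image in each factor $G'(K(t)^h_{x_i})/G'(\mathcal O_i^h)\cong\mathbb Z$ is trivial, so the displayed density claim fails outright; the correct requirement is $g\in G'(U)$. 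Even so, nothing like strong approximation is available in this generality: strong (and even weak) approximation fails for tori and for non--simply connected semisimple groups, and for simply connected ones it requires isotropy hypotheses — none of which are assumed here, since $G'=G_K$ is an arbitrary connected reductive group. Connectedness and smoothness do not ``rule out the obstruction'': the double coset set $G'(U)\backslash G'(K(t))/\prod_i G'(\mathcal O_i)$ is precisely the gluing obstruction you are trying to kill, so asserting that it can always be solved is essentially a restatement of the theorem, not a proof of it.

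Second, deferring the missing step to \cite{C-TO} does not close the gap, because that is not how Prop.~2.2 of \cite{C-TO} is proved (nor how the present paper treats the statement — it is simply quoted from there). The proof in the literature rests on the theorem of Raghunathan--Ramanathan \cite{RR} that a torsor under a connected reductive $K$-group over the affine line $\mathbb A^1_K$ is extended from $\mathrm{Spec}(K)$; once $P\cong P_0\times_K\mathbb A^1_K$, generic triviality of $P$ together with specialization at a $K$-rational point of a suitable nonempty open subset (available since $K$ is infinite) forces $P_0$, and hence $P$, to be trivial. So the essential input is a geometric statement about bundles on the affine line plus a specialization argument, not an adelic-style patching at the bad points; your proposal replaces the hard step by an appeal to an approximation principle that is not true for the groups covered by the theorem.
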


We need the following theorem.
\begin{thm}[Homotopy invariance]
\label{HomInvNonram}
Let $S \mapsto {\cal F}(S)$ be the functor defined by the formulae
(\ref{MainFunctor2})
and let
${\cal F}_{nr,K[t]}(K(t))$ be defined by the formulae
(\ref{DefnUnramified}).
Let
$K(t)$ be the rational function field in one variable.
Then one has
$$
{\cal F}(K)={\cal F}_{nr,K[t]}(K(t)).
$$
\end{thm}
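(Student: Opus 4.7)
The plan is to prove the theorem in two stages: the easy inclusion by functoriality, and the non-trivial inclusion by lifting an unramified class to $\mathcal F(K[t])$ and then exploiting that $C$ is a torus.

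The inclusion $\mathcal F(K)\subseteq\mathcal F_{nr,K[t]}(K(t))$ is immediate, since any constant class from $K$ lifts to $\mathcal F(K[t]_{\mathfrak p})$ for every height-one prime $\mathfrak p\subset K[t]$. For the reverse inclusion I would first observe that $C_K$ is a $K$-torus, split by some finite Galois extension $L/K$; Galois descent from $C_L(L[t])=(L[t]^\times)^{\oplus r}=(L^\times)^{\oplus r}=C_L(L)$ yields $C(K[t])=C(K)$. Consequently any class in $\mathcal F(K[t])=C(K[t])/\mu(G(K[t]))$ is represented by a constant $c_0\in C(K)$, and its image in $\mathcal F(K(t))$ equals that of $[c_0]\in\mathcal F(K)$. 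Thus it suffices to lift an arbitrary $\alpha\in\mathcal F_{nr,K[t]}(K(t))$ to a class in $\mathcal F(K[t])$.

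For the lift I would use the short exact sequence $1\to H\to G\xra{\mu} C\to 1$ of smooth group schemes. Set $\xi:=\partial\alpha\in H^1_{\text{\'et}}(K(t),H)$; by functoriality of the boundary map together with the $K[t]$-unramifiedness of $\alpha$, the class $\xi$ is itself $K[t]$-unramified as an element of $H^1_{\text{\'et}}(-,H)$. Since $H_K$ is a smooth linear algebraic group over $K$ and $K[t]$ is a Dedekind $K$-algebra, Harder's Lemma \ref{Harder} supplies a lift $\tilde\xi\in H^1_{\text{\'et}}(K[t],H)$. The image of $\tilde\xi$ in $H^1_{\text{\'et}}(K[t],G)$ restricts over $K(t)$ to the image of $\xi$ there, which is trivial because $\xi$ arose as a boundary; applying Theorem \ref{C-TOthm} to $G_K$, the kernel of $H^1_{\text{\'et}}(K[t],G)\to H^1_{\text{\'et}}(K(t),G)$ is trivial, so the image of $\tilde\xi$ in $H^1_{\text{\'et}}(K[t],G)$ is already trivial, whence $\tilde\xi=\partial\tilde\alpha$ for some $\tilde\alpha\in\mathcal F(K[t])$. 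Restricting to $K(t)$, the classes $\tilde\alpha|_{K(t)}$ and $\alpha$ share the same boundary $\xi$, and the injectivity of $\partial$ on $\mathcal F(K(t))$ established in Lemma \ref{BoundaryInj} forces $\tilde\alpha|_{K(t)}=\alpha$.

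The main obstacle is the descent step, that is, pulling $\tilde\xi\in H^1_{\text{\'et}}(K[t],H)$ back through the boundary map to obtain an honest element $\tilde\alpha\in\mathcal F(K[t])$. This requires combining Harder's patching with the Colliot-Th\'el\`ene--Ojanguren triviality of $\ker[H^1_{\text{\'et}}(K[t],G)\to H^1_{\text{\'et}}(K(t),G)]$. Once this descent is achieved, the identification of $\alpha$ with a class coming from $\mathcal F(K)$ reduces to the elementary observation that a $K$-torus has no non-constant $\Aff^1_K$-points.
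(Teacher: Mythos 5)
Your proposal is correct and follows essentially the same route as the paper's own proof: lift $\partial(\alpha)$ to $H^1_{\text{\'et}}(K[t],H)$ via Harder's patching (Lemma \ref{Harder}), kill its image in $H^1_{\text{\'et}}(K[t],G)$ using Theorem \ref{C-TOthm}, descend to $\tilde\alpha\in\mathcal F(K[t])$ by exactness, identify $\tilde\alpha|_{K(t)}=\alpha$ via the injectivity of the boundary (Lemma \ref{BoundaryInj}), and conclude with $C(K)=C(K[t])$. The only cosmetic differences are that you spell out $C(K[t])=C(K)$ by Galois descent to the split torus, and you treat the statement as an equality of subgroups of $\mathcal F(K(t))$ rather than also noting, as the paper does via the specialization $s_0$, that $\mathcal F(K)\to\mathcal F(K(t))$ is injective.
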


\begin{proof}
The injectivity is clear, since the composition
$$
{\cal F}(K) \to {\cal F}_{nr,K[t]}(K(t)) \xrightarrow{s_0} {\cal F}(K)
$$
coincides with the identity (here
$
s_0
$
is the specialization map at the point zero defined in 4.6).

It remains to check the surjectivity. Let
$$\mu_K = \mu \otimes_{\mathcal O} K: G_K= G \otimes_{\mathcal O} K \to C \otimes_{\mathcal O} K = C_K .$$
Let
$a\in {\cal F}_{nr,K[t]}(K(t))$
and let
$H_K=ker(\mu_K)$. Since $\mu$ is smooth the $K$-group $H_K$ is smooth.
Since $G_K$ is reductive it is $K$-affine. Whence $H_K$ is $K$-affine.
Clearly,
the element
$
\partial (a)\in H_{et}^1(K(t),H_K)
$
is a class which for every closed point
$
x\in \Aff^1_K
$
belongs to the image of
$
H_{et}^1({\cal O}_x,H_K)
$.
Thus by Lemma
\ref{Harder},
$
\xi:= \partial (a)
$
can be represented by an element
$
\tilde \xi \in H_{et}^1(K[t],H_K)
$,
where
$
K[t]
$
is the polynomial ring.
Consider the diagram
$$
\SelectTips{cm}{}
\xymatrix @C=2pc @R=10pt {
{} & \tilde a \ar@{|->}[r] & {\tilde \xi} \ar@{|->}[r] & {\tilde \zeta} & {} \\
1 \ar[r] & {{\cal F}(K[t])} \ar[r]^{\partial\qquad} \ar[dd]_{\epsilon} & {H_{et}^1(K[t],H_K)} \ar[r] \ar[dd]_{\rho} & {H_{et}^1(K[t],G_K)} \ar[dd]_{\eta} \\
{} & {} & {} & {} & {} \\
1 \ar[r] & {{\cal F}(K(t))} \ar[r]^{\partial\qquad} & {H_{et}^1(K(t),H_K)} \ar[r] & {H_{et}^1(K(t),G_K)} \\
%{} & {} & {} & {} & {} \\
%1 \ar[r] & {{\cal F}(K)} \ar[r]^{\partial\qquad} \ar[uu]_{\epsilon} & {H_{et}^1(K,H)} \ar[r] \ar[uu]_{\rho} & {H_{et}^1(K,G)} \ar[uu]_{\eta}\\
{} & {a} \ar@{|->}[r] & {\xi } \ar@{|->}[r] & {\ast} & {} }
$$
in which all the maps are canonical, the horizontal lines are exact sequences of pointed sets and
$ker(\eta)=*$ \
by Theorem
\ref{C-TOthm}.
Since
$
\xi
$
goes to the trivial element in
$
H_{et}^1(K(t),G_K)
$,
one concludes that
$
\eta(\tilde \zeta)= * .
$
Whence $\tilde \zeta = *$
by Theorem
\ref{C-TOthm}.
Thus there exists an element
$
\tilde a \in {\cal F}(K[t])
$
such that
$
\partial (\tilde a)= \tilde \xi.
$
The map
$
{\cal F}(K(t)) \to H_{et}^1(K(t),H_K)
$
is injective by Lemma
\ref{BoundaryInj}.
Thus
$
\epsilon(\tilde a)=a.
$
The map
$\mathcal F(K) \to \mathcal F(K[t])$
induced by the inclusion
$K \hra K[t]$
is surjective,
since the corresponding map
$C(K) \to C(K[t])$
is an isomorphism. Whence there exists an
$a_0 \in \mathcal F(K)$
such that its image in
$\mathcal F(K(t))$
coincides with the element
$a$.

\end{proof}

\begin{cor}
\label{TwoSpecializations}
Let $S \mapsto {\cal F}(S)$ be the functor defined in
(\ref{MainFunctor}).
Let
$$
s_0, s_1: {\cal F}_{nr, K[t]}(K(t)) \to {\cal F}(K)
$$
be the specialization maps at zero and at one
(at the primes (t) and (t-1)). Then $s_0=s_1$.
\end{cor}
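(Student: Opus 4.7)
The plan is to derive this as a direct consequence of the homotopy invariance Theorem~\ref{HomInvNonram}. Given $a \in \mathcal{F}_{nr,K[t]}(K(t))$, that theorem produces an element $a_0 \in \mathcal{F}(K)$ whose image in $\mathcal{F}(K(t))$ is $a$. In fact, inspecting the proof of Theorem~\ref{HomInvNonram}, one first obtains an intermediate lift $\tilde a \in \mathcal{F}(K[t])$ mapping to $a$, and then $a_0 \in \mathcal{F}(K)$ is pulled back from $\tilde a$ via $K \hookrightarrow K[t]$. I would take this polynomial-level lift $\tilde a$ as the central object of the argument.

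Next I would compare $s_0(a)$ and $s_1(a)$ to this global lift. For $i = 0, 1$ the canonical factorization $K[t] \to K[t]_{(t-i)} \to K$ (the last arrow being the evaluation at $t = i$) gives a commutative square
\[
\begin{CD}
\mathcal{F}(K[t]) @>>> \mathcal{F}(K(t)) \\
@VVV @VVV \\
\mathcal{F}(K[t]_{(t-i)}) @>>> \mathcal{F}(K(t)),
\end{CD}
\]
so the image of $\tilde a$ in $\mathcal{F}(K[t]_{(t-i)})$ is a lift of $a$. By Theorem~\ref{NisnevichCor} (applied to the discrete valuation ring $K[t]_{(t-i)}$) this lift is \emph{unique}, hence coincides with the lift used to compute $s_i(a)$ in Definition~\ref{SpecializationDef}. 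Evaluating at $t = i$ therefore gives $s_i(a) = ev_i(\tilde a)$, where $ev_i: \mathcal{F}(K[t]) \to \mathcal{F}(K)$ is induced by the $K$-algebra map $K[t] \to K,\; t \mapsto i$.

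To finish, observe that both compositions $K \hookrightarrow K[t] \xrightarrow{ev_i} K$ equal the identity on $K$. Since $\tilde a$ is the image of $a_0 \in \mathcal{F}(K)$ under $\mathcal{F}(K) \to \mathcal{F}(K[t])$, functoriality gives $ev_0(\tilde a) = a_0 = ev_1(\tilde a)$, and hence $s_0(a) = a_0 = s_1(a)$. As $a$ was arbitrary, $s_0 = s_1$.

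I do not expect a real obstacle here: once homotopy invariance is available, the corollary is essentially a bookkeeping statement. The only point requiring a little care is the identification $s_i(a) = ev_i(\tilde a)$, which rests on the injectivity of $\mathcal{F}(K[t]_{(t-i)}) \to \mathcal{F}(K(t))$ supplied by Theorem~\ref{NisnevichCor}.
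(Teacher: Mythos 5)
Your proof is correct and is exactly the argument the paper has in mind: the paper disposes of this corollary in one line as an ``obvious consequence'' of Theorem~\ref{HomInvNonram}, namely that any $S$-unramified class comes from a constant class $a_0\in\mathcal F(K)$, whose specialization at any $K$-rational point is again $a_0$. Your write-up merely makes explicit the bookkeeping (uniqueness of the lift over $K[t]_{(t-i)}$ via Theorem~\ref{NisnevichCor}, compatibility of evaluation maps), which is consistent with Definition~\ref{SpecializationDef}.
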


\begin{proof}
It is an obvious consequence of Theorem
\ref{HomInvNonram}.
\end{proof}

\section{Equating group schemes}
\label{SecEquatingGroups}
The following Proposition
is a straightforward analog of \cite[Prop.7.1]{OP}

\begin{prop}
\label{PropEquatingGroups}
Let $S$ be a regular semi-local
irreducible scheme.
Let
$\mu_1: G_1\to C_1$ and
$\mu_2: G_2\to C_2$
be two smooth $S$-group scheme morphisms with tori $C_1$ and $C_2$.
Assume as well that $G_1$ and $G_2$ are reductive $S$-group schemes
which are forms of each other. Assume that $C_1$ and $C_2$ are forms of each other.
Let $T \subset S$ be a connected non-empty closed
sub-scheme of $S$, and
$\varphi: G_1|_T \to G_2|_T$,
$\psi: C_1|_T \to C_2|_T$
be $S$-group scheme isomorphisms such that
$(\mu_2|_{T}) \circ \varphi = \psi \circ (\mu_1|_{T})$.
Then there exists a finite \'{e}tale morphism
$\tilde S \xra{\pi} S$ together with its section $\delta: T \to
\tilde S$ over $T$ and $\tilde S$-group scheme isomorphisms
$\Phi: \pi^*{G_1} \to \pi^*{G_2}$
and
$\Psi: \pi^*{C_1} \to \pi^*{C_2}$
such that
\begin{itemize}
\item[(i)]
$\delta^*(\Phi)=\varphi$,
\item[(ii)]
$\delta^*(\Psi)=\psi$,
\item[(iii)]
$\pi^*(\mu_2) \circ \Phi = \Psi \circ \pi^*(\mu_1): \pi^*(G_1) \to \pi^*(C_2)$.
\end{itemize}
\end{prop}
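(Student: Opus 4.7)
The argument follows the blueprint of \cite[Prop.~7.1]{OP}, with the additional bookkeeping needed to respect the homomorphisms $\mu_i$. The plan is to assemble the data of a compatible pair of isomorphisms into a single $S$-scheme $Z$, to view $(\varphi,\psi)$ as a $T$-section of $Z$, and then to spread out this section along a finite \'etale neighborhood of $T$ in $S$ via a standard equating lemma.

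First I would introduce the $S$-schemes $\mathcal I_G := \underline{\mathrm{Isom}}_S(G_1,G_2)$ and $\mathcal I_C := \underline{\mathrm{Isom}}_S(C_1,C_2)$ of group scheme isomorphisms, and let $Z \subset \mathcal I_G \times_S \mathcal I_C$ be the closed subscheme carved out by the equation $\mu_2\circ\Phi=\Psi\circ\mu_1$. Equivalently, $Z$ parametrizes isomorphisms of the two-term complexes $(G_i \xrightarrow{\mu_i} C_i)$. The compatibility hypothesis $(\mu_2|_T)\circ\varphi = \psi\circ(\mu_1|_T)$ ensures that the datum $(\varphi,\psi)$ gives a canonical $T$-section $\sigma: T \to Z$ of the structural morphism.

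Next I would verify smoothness of $Z \to S$ along $\sigma(T)$. Since each $G_i$ is reductive and each $C_i$ is a torus, the automorphism group schemes $\underline{\mathrm{Aut}}_S(G_i)$ and $\underline{\mathrm{Aut}}_S(C_i)$ are smooth affine over $S$ by Demazure--Grothendieck \cite{D-G}. Because $G_1,G_2$ are forms of each other (and similarly for the tori $C_i$), the schemes $\mathcal I_G$ and $\mathcal I_C$ are \'etale-locally trivial torsors under these automorphism schemes, hence are themselves smooth over $S$. Smoothness of $Z$ at $\sigma(T)$ then follows from an infinitesimal lifting argument using that $\mu_1$ is smooth: on tangent spaces, the surjectivity of $d\mu_1$ ensures that any infinitesimal deformation of $\Phi$ sending $H_1 = \ker(\mu_1)$ into $H_2 = \ker(\mu_2)$ can be matched by a unique infinitesimal deformation of $\Psi$ with no obstruction.

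Finally I would invoke the standard equating lemma, exactly as in \cite[Prop.~7.1]{OP}: given a smooth $S$-scheme $Z$ with $S$ regular semi-local irreducible and a section $\sigma$ over a non-empty closed subscheme $T\subset S$, there exist a finite \'etale morphism $\pi:\tilde S \to S$, a section $\delta: T\to \tilde S$ of $\pi|_T$, and an $\tilde S$-point $\tau: \tilde S\to Z$ whose pullback along $\delta$ agrees with $\sigma$. Writing $\tau=(\Phi,\Psi)$ yields data fulfilling (i), (ii) and (iii). The principal technical point is the smoothness verification for $Z$; once this is secured, the equating lemma itself is purely geometric and never sees the homomorphisms $\mu_i$.
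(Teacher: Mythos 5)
Your overall strategy coincides with the one the paper intends: the paper itself gives no argument for Proposition \ref{PropEquatingGroups} beyond calling it a straightforward analog of \cite[Prop.~7.1]{OP} and referring to \cite{PaSV} for a slightly weaker statement, and that intended argument is exactly yours — form an Isom-type $S$-scheme, read $(\varphi,\psi)$ as a section of it over $T$, and apply the finite-\'etale section-extension lemma over the regular semi-local base. Granting a smooth, quasi-compact (affine) $S$-scheme $Z$ through $\sigma(T)$ carrying a universal compatible pair, properties (i)--(iii) do follow formally, as you say.

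The genuine gap sits exactly at what the paper calls the ``additional technicalities'': smoothness of $Z$ over $S$ along $\sigma(T)$. Your infinitesimal argument proves the wrong implication. Since $\mu_1$ is smooth and $C_1$ has connected fibres, $\mu_1$ is an fppf epimorphism, so $\Psi$ is uniquely determined by $\Phi$ whenever it exists; hence the assertion that a deformation of $\Phi$ carrying $H_1=\ker(\mu_1)$ into $H_2=\ker(\mu_2)$ is matched by a unique deformation of $\Psi$ is the trivial direction. Formal smoothness of $Z$ requires instead that a point $(\Phi_0,\Psi_0)\in Z(B_0)$ lifts along every square-zero extension $B\to B_0$: smoothness of $\underline{\mathrm{Isom}}_S(G_1,G_2)$ produces some lift $\Phi$ of $\Phi_0$, but nothing guarantees that this lift carries $H_1$ into $H_2$, and you must show it can be corrected inside $\mathrm{Lie}(\underline{\mathrm{Aut}}(G_2))\otimes I$. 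The standard route is to identify $Z$, \'etale-locally near $\sigma(T)$, with a torsor under the automorphism sheaf of the datum $(G_2\xra{\mu_2}C_2)$ and to prove that this sheaf is a smooth group scheme (it contains the inner automorphisms $G_2^{\mathrm{ad}}$, which preserve the normal subgroup $H_2$ and act trivially on the commutative $C_2$, with essentially \'etale quotient); none of this is in your sketch, and it is also here that one must deal with the fact that the hypotheses only make $G_1,G_2$ — not the pairs $(G_i\to C_i)$ — forms of each other. Two smaller omissions: the tori $C_1,C_2$ are forms of each other only because they become isomorphic over the non-empty $T$ and hence have equal rank, which should be said; and $\underline{\mathrm{Isom}}_S(C_1,C_2)$ is merely locally of finite type (the automorphism sheaf of a torus is an infinite twisted-constant group scheme), so before invoking the section-extension lemma of \cite{PaSV}, \cite[Prop.~7.1]{OP} — stated for smooth affine finite-type schemes — you must first cut $Z$ down to a quasi-compact union of components containing $\sigma(T)$.
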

We refer to
\cite[Prop.5.1]{PSV}
for the proof of a slightly weaker statement. The proof of the Proposition
can be done in the same style with some additional technicalities.

\section{Nice triples}
\label{NiceTriples}
We study in the present Section certain packages of geometric data and morphisms of that
packages. The concept of "nice triples" is very closed to the one of "standard triples "
\cite[Defn.4.1]{Vo} and is inspired by the latter one. Let
$k$ be an infinite field, $X/k$ be a smooth geometrically irreducible variety,
$x_1,x_2, \dots ,x_n \in X$ be closed points.
Let
$\text{Spec}(\mathcal O_{X, \{x_1,x_2,\dots,x_n\}})$
be the respecting semi-local ring.
\begin{defn}
\label{DefnNiceTriple}
Let
$U:= \text{Spec}(\mathcal O_{X, \{x_1,x_2,\dots,x_n\}})$.
A nice triple over $U$
consists of the following family of data:
\begin{itemize}
\item[(i)]
a smooth morphism
$q_U: \mathcal X \to U$, where $\mathcal X$ is an irreducible scheme,
\item[(iii)]
%a closed codimension one subscheme
an element
$f \in \Gamma(\mathcal X, \mathcal O_{\mathcal X})$
%$\mathcal Z$,
\item[(ii)]
a section
$\Delta$
of the morphism
$q_U$.
\end{itemize}
These data must satisfy the following conditions:
\begin{itemize}
\item[(a)]
each component of each fibre of the morphism $q_U$ has dimension one,
%and can be decomposed as
%$pr_U \circ \Pi$,
%where
%$\Pi: \mathcal X \to \Aff^1 \times U$
%is a finite surjective morphism;
\item[(b)]
the
$\Gamma(\mathcal X, \mathcal O_{\mathcal X})/f\cdot\Gamma(\mathcal X, \mathcal O_{\mathcal X})$
is a finite
$\Gamma(U, \mathcal O_{U})=O_{X, \{x_1,x_2,\dots,x_n\}}$-module,
%$\mathcal Z$ is finite and surjective over $U$;
\item[(c)]
there exists a finite surjective $U$-morphism
$\Pi: \mathcal X \to \Aff^1 \times U$,
\item[(d)]
$\Delta^*(f) \neq 0 \in \Gamma(U, \mathcal O_{U})$.
\end{itemize}

A morphism of two nice triples
$(\mathcal X^{\prime}, f^{\prime}, \Delta^{\prime}) \to
(\mathcal X, f, \Delta)$
is an \'{e}tale morphism of $U$-schemes
$\theta: \mathcal X^{\prime} \to \mathcal X$
such that
\begin{itemize}
\item[(1)]
$q^{\prime}_U = q_U \circ \theta$,
\item[(2)]
$f^{\prime} = \theta^{*}(f)\cdot g^{\prime}$
for an element
$g^{\prime} \in \Gamma(\mathcal X^{\prime}, \mathcal O_{\mathcal X^{\prime}})$ \\
%$\theta^{-1} (\mathcal Z) \subset \mathcal Z^{\prime}$
(in particular,
$\Gamma(\mathcal X^{\prime} , \mathcal O_{\mathcal X^{\prime} })/\theta^*(f)\cdot\Gamma(\mathcal X^{\prime} , \mathcal O_{\mathcal X^{\prime} })$
is a finite
$O_{X, \{x_1,x_2,\dots,x_n\}}$-module ),
%$\theta^{-1} (\mathcal Z)$ is finite surjective over $U$);
\item[(3)]
$\Delta = \theta \circ \Delta^{\prime}$.
\end{itemize}
(Stress that there are no conditions concerning an interaction of $\Pi^{\prime}$ and $\Pi$ ).
\end{defn}

\begin{thm}
\label{ThmEquatingGroups}
Let $U$ be as in Definition
\ref{DefnNiceTriple}.
Let
$(\mathcal X, f, \Delta)$
be a nice triple over $U$.
Let $G_{\mathcal X}$ be a reductive
$\mathcal X$-group scheme and
$G_U:= \Delta^*(G_{\mathcal X})$
and
$G_{\text{const}}$
be the pull-back of $G_U$ to
$\mathcal X$.
Let $C_{\mathcal X}$ be an
$\mathcal X$-tori and
$C_U:= \Delta^*(C_{\mathcal X})$
and
$C_{\text{const}}$
be the pull-back of $C_U$ to
$\mathcal X$.
Let
$\mu_{\mathcal X}: G_{\mathcal X} \to C_{\mathcal X}$
be an $\mathcal X$-group scheme morphism smooth as a scheme morphism.
Let
$\mu_U = \Delta^*(\mu_{\mathcal X})$
and
$\mu_{\const}: G_{\const} \to C_{\const}$
be the the pull-back of $\mu_U$ to
$\mathcal X$.

Then
there exist a morphism
$\theta: (\mathcal X^{\prime}, f^{\prime}, \Delta^{\prime}) \to
(\mathcal X, f, \Delta)$
of nice triples and isomorphisms
$$\Phi: \theta^*(G_{\text{const}}) \to \theta^*(G_{\mathcal X}), \ \Psi: \theta^*(C_{\text{const}}) \to \theta^*(C_{\mathcal X})$$
of
$\mathcal X^{\prime}$-group schemes such that
$(\Delta^{\prime})^*(\Phi)= id_{G_U}$,
$(\Delta^{\prime})^*(\Phi)= id_{G_U}$
and
\begin{equation}
\label{PhiAndPsiRelation}
\theta^*(\mu_{\mathcal X}) \circ \Phi = \Psi \circ \theta^*(\mu_{const})
\end{equation}
\end{thm}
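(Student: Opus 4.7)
The plan is to reduce Theorem \ref{ThmEquatingGroups} to Proposition \ref{PropEquatingGroups} by working over an appropriate semi-local localization of $\mathcal X$ around $\Delta(U)$, and then spreading the resulting data out to produce a nice triple.

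First, since $q_U$ is smooth and $U$ is regular, $\mathcal X$ is regular. The section $\Delta$ picks out finitely many closed points $\Delta(x_1),\ldots,\Delta(x_n)\in\mathcal X$. Let $\mathcal X_\Delta := \Spec(\mathcal O_{\mathcal X,\Delta(U)})$ be the semi-local ring of $\mathcal X$ at these points; it is a regular, semi-local, irreducible scheme and contains $\Delta(U)$ as a closed subscheme. I would then apply Proposition \ref{PropEquatingGroups} to $S=\mathcal X_\Delta$ with $T=\Delta(U)$, the two reductive group schemes being the restrictions of $G_{\const}$ and $G_{\mathcal X}$, the two tori being the restrictions of $C_{\const}$ and $C_{\mathcal X}$, and the smooth $S$-morphisms being the restrictions of $\mu_{\const}$ and $\mu_{\mathcal X}$; the isomorphisms $\varphi$ and $\psi$ over $T$ are taken to be $\id_{G_U}$ and $\id_{C_U}$, which are $\mu$-compatible tautologically. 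This yields an \'{e}tale morphism $\tilde\pi_0:\tilde{\mathcal X}_0\to\mathcal X_\Delta$ together with a section $\tilde\delta_0$ over $T$ and isomorphisms $\tilde\Phi_0,\tilde\Psi_0$ satisfying all the required identities.

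Next, by a standard limit/spreading-out argument, the \'{e}tale cover, the section and the isomorphisms are already defined on some affine open neighborhood $V\subset\mathcal X$ of the closed points $\Delta(x_i)$: one obtains an \'{e}tale morphism $\theta_V:\mathcal X_V\to V$, a section $\Delta':U\to\mathcal X_V$ of $q_U\circ\theta_V$ lifting $\Delta$, and isomorphisms $\Phi,\Psi$ on $\mathcal X_V$ whose pull-back by $\Delta'$ is the identity and which intertwine $\theta^*(\mu_{\mathcal X})$ with $\theta^*(\mu_{\const})$, where $\theta:\mathcal X_V\to\mathcal X$ is the composition of $\theta_V$ with the open immersion $V\hra\mathcal X$. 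Set $\mathcal X':=\mathcal X_V$ (possibly further shrunk), $f':=\theta^*(f)$; then the smoothness and relative dimension condition (a) of Definition \ref{DefnNiceTriple} and the condition (d) $(\Delta')^*(f')=\Delta^*(f)\neq 0$ are immediate from the corresponding properties of $(\mathcal X,f,\Delta)$.

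I expect the main obstacle to lie in arranging conditions (b) and (c) of Definition \ref{DefnNiceTriple}, namely that $\Gamma(\mathcal X',\mathcal O_{\mathcal X'})/f'\Gamma(\mathcal X',\mathcal O_{\mathcal X'})$ be a finite $\Gamma(U,\mathcal O_U)$-module, and that there exist a finite surjective $U$-morphism $\Pi':\mathcal X'\to\Aff^1\times U$. Since \'{e}tale morphisms of finite type do not in general preserve finiteness, the composition $\Pi\circ\theta$ is only quasi-finite, not finite. To remedy this I would shrink $V$ so that it engulfs the full zero-locus of $f$, and refine the \'{e}tale cover in a way compatible with the given finite morphism $\Pi:\mathcal X\to\Aff^1\times U$, for instance by a Noether-normalization-type construction applied to the relative curve $q_U\circ\theta_V$ (in the spirit of Proposition \ref{CartesianDiagram}) to produce a new finite surjective $\Pi':\mathcal X'\to\Aff^1\times U$. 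Once (b) and (c) are secured, the triple $(\theta,\Phi,\Psi)$ provides the required morphism of nice triples satisfying $(\Delta')^*(\Phi)=\id_{G_U}$, $(\Delta')^*(\Psi)=\id_{C_U}$ and the intertwining relation \eqref{PhiAndPsiRelation}.
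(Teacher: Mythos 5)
Your overall strategy (localize semi-locally, apply Proposition \ref{PropEquatingGroups} with $T=\Delta(U)$ and $\varphi=\id$, $\psi=\id$, spread out the finite \'etale cover and the isomorphisms, then repair conditions (b) and (c) of Definition \ref{DefnNiceTriple}) is the right shape, and you correctly identified where the difficulty sits. But the proposal has a genuine gap exactly there, caused by the choice of localization points. You take $S=\Spec(\mathcal O_{\mathcal X,\Delta(U)})$, i.e.\ you localize only at the closed points $\Delta(x_i)$. The spreading-out then produces data $(\theta_V,\Phi,\Psi)$ only over some open neighborhood $V$ of these finitely many points, and there is no way afterwards to make $V$ contain the zero locus $\{f=0\}$: the closed points of $\{f=0\}$ are in general strictly more numerous than those of $\Delta(U)$ (Claim \ref{DeltaIsWellDefined} only gives the opposite inclusion), "shrinking $V$ so that it engulfs the full zero-locus" is not possible since shrinking only makes $V$ smaller, and $\Phi,\Psi$ exist only over $V$, so they cannot be extended over the missing points. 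Consequently neither (b) (finiteness of $\Gamma(\mathcal X',\mathcal O_{\mathcal X'})/f'$ over $U$) nor (c) can be secured from your starting point; likewise the appeal to a "Noether-normalization-type construction" for a new finite surjective $\Pi':\mathcal X'\to\Aff^1\times U$ is precisely the nontrivial content of Lemma \ref{Lemma_8_2}, and that lemma requires the open set to contain the \emph{entire} preimage $\Pi^{-1}(\Pi(\mathcal Y))$ of a closed subscheme finite over $U$ --- a containment your $V$ does not have.

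The paper's proof fixes this before invoking Proposition \ref{PropEquatingGroups}: with $\mathcal Z=\{f=0\}$ one sets $\mathcal Y=\Pi^{-1}(\Pi(\mathcal Z\cup\Delta(U)))$, which is finite over $U$ because $\mathcal Z$ and $\Delta(U)$ are and $\Pi$ is finite, and takes $S=\Spec(\mathcal O_{\mathcal X,y_1,\dots,y_m})$ at \emph{all} closed points of $\mathcal Y$, with $T=\Delta(U)$. Since $\mathcal Y$ is semi-local, any open $\mathcal V\subseteq\mathcal X$ containing $y_1,\dots,y_m$ automatically contains $\mathcal Y=\Pi^{-1}(\Pi(\mathcal Y))\supseteq\mathcal Z$. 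Then Lemma \ref{Lemma_8_2} yields an open $\mathcal W\subseteq\mathcal V$ still containing $\Pi^{-1}(\Pi(\mathcal Y))$ together with a finite surjective $U$-morphism $\Pi^{*}:\mathcal W\to\Aff^1\times U$; putting $\mathcal X'=\theta^{-1}(\mathcal W)$, $f'=\theta^{*}(f)$, condition (c) holds for $\Pi^{*}\circ\theta$ (as $\theta$ is finite onto $\mathcal W$), and condition (b) holds because $\{f=0\}\subset\mathcal W$, so $\{f'=0\}$ is finite over $\{f=0\}$ and hence over $U$. So the missing idea is to enlarge the set of localization points to the $\Pi$-saturation of $\{f=0\}\cup\Delta(U)$ \emph{before} equating the group schemes; without it the construction cannot be completed.
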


Basically, this Theorem is a consequence of Proposition
\ref{PropEquatingGroups}.

{\bf Proof of Theorem \ref{ThmEquatingGroups}.} We can start by
almost literally repeating arguments from the proof of \cite[Lemma
8.1]{OP1}, which involve the following purely geometric lemma
\cite[Lemma 8.2]{OP1}.
\par
For reader's convenience below we state that Lemma adapting
notation to the ones of Section \ref{NiceTriples}.
Namely, let $U$ be as in Definition \ref{DefnNiceTriple} and let
$(\mathcal X,f,\Delta)$ be a nice triple over $U$. Further, let
$G_{\mathcal X}$ be a simple simply-connected
$\mathcal X$-group
scheme,
$G_U:=\Delta^{*}(G_{\mathcal X})$, and let $G_{\const}$ be
the pull-back of $G_U$ to $\mathcal X$. Finally, by the definition
of a nice triple there exists a finite surjective morphism
$\Pi:\mathcal X\to\Aff^1\times U$ of $U$-schemes.
\begin{lem}
\label{Lemma_8_2} Let $\mathcal Y$ be a closed nonempty sub-scheme
of $\mathcal X$, finite over $U$. Let $\mathcal V$ be an open
subset of $\mathcal X$ containing $\Pi^{-1}(\Pi(\mathcal Y))$. There
exists an open set $\mathcal W \subseteq \mathcal V$ still
containing
$\Pi^{-1}(\Pi(\mathcal Y))$
and endowed with a finite
surjective morphism $\mathcal W\to\Aff^1\times U$ {\rm(}in general
$\neq\Pi${\rm)}.
\end{lem}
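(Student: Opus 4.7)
The plan is to realize $\mathcal W$ as a principal open of the form $\mathcal X_{\Pi^* h}$ for a well-chosen polynomial $h \in A[t]$, where $A := \Gamma(U, \mathcal O_U)$ and $t$ is the pullback coordinate on $\Aff^1$, and to equip $\mathcal W$ with the new finite surjective morphism to $\Aff^1 \times U$ given by the coordinate $s := \Pi^* h + 1/\Pi^* h$. The guiding idea is that $h$ should vanish on $\Pi(\mathcal X \setminus \mathcal V)$ but take the value $1$ on $\Pi(\mathcal Y)$, after which the classical identity $h^2 - s h + 1 = 0$ will give $A[h, 1/h]$ a finite structure over $A[s]$.

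To construct $h$, set $Z := \mathcal X \setminus \mathcal V$ and $\mathcal Y' := \Pi^{-1}(\Pi(\mathcal Y))$. Both are closed in $\mathcal X$, disjoint by hypothesis, and $\mathcal Y'$ is finite over $U$ since $\Pi$ is finite. Hence the ideals $I, J' \subseteq B := \Gamma(\mathcal X, \mathcal O_{\mathcal X})$ of $Z, \mathcal Y'$ satisfy $I + J' = B$, and I pick $a \in I$, $b \in J'$ with $a + b = 1$; thus $a|_Z = 0$ and $a|_{\mathcal Y'} = 1$. I set $h := N_{B/A[t]}(a) \in A[t]$. A fiberwise calculation of the determinant of multiplication by $a$ on $B \otimes_{A[t]} \kappa(p)$ shows that $h$ vanishes on $T := \Pi(Z)$ (a local factor of the fiber is killed by $a$) and equals $1$ on $\Pi(\mathcal Y)$ (multiplication by $a$ on each factor of the fiber is unipotent, hence has determinant $1$).

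Now define $\mathcal W := \mathcal X_{\Pi^* h}$. Automatically $\mathcal W \subseteq \mathcal V$ since $\Pi^* h$ vanishes on $Z$, and $\mathcal W \supseteq \mathcal Y'$ since $\Pi^* h = 1$ there. For the new morphism to $\Aff^1 \times U$, set $s := \Pi^* h + 1/\Pi^* h \in \Gamma(\mathcal W, \mathcal O_{\mathcal W}) = B[1/h]$. The chain of inclusions
$$A[s] \subseteq A[h, 1/h] \subseteq A[t, 1/h] \subseteq B[1/h]$$
consists of finite extensions: the first is free of rank $2$ via $h^2 - s h + 1 = 0$; the second is free of rank $\deg_t h$ by monicity of $h$ in $t$; the third is a localization of the original finite extension $A[t] \subseteq B$. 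Hence $\mathcal W \to \Aff^1 \times U = \Spec(A[s])$ is finite, and surjectivity follows from transcendence of $s$ over $A$ (giving injectivity $A[s] \hookrightarrow B[1/h]$) together with integrality of the extension.

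The delicate point is arranging that $h = N_{B/A[t]}(a)$ is monic in $t$: the norm of an arbitrary $a$ is in general a polynomial in $t$ whose leading coefficient need not be a unit in $A$. To remedy this, one replaces $a$ by $a + t^N b'$ for $N$ large and $b' \in I \cap J'$ chosen carefully; this modification preserves the conditions $a|_Z = 0$ and $a|_{\mathcal Y'} = 1$ while letting one dominate the top-$t$-degree contribution to the norm. The semi-local structure of $A$ is essential here: it permits normalizing the leading coefficient of $h$ to a unit, and then to $1$, via local-to-global arguments in the spirit of Lemma \ref{Lemma2}. This is the main obstacle of the proof; once monicity is achieved, the remaining finiteness/surjectivity steps are essentially mechanical.
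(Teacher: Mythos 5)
The two constructive ideas in your proposal are sound and even elegant: taking $h=N_{B/A[t]}(a)$ with $a\equiv 0$ on $I(\mathcal X\setminus\mathcal V)$ and $a\equiv 1$ on the ideal of $\Pi^{-1}(\Pi(\mathcal Y))$ does give $\{h=0\}=\Pi(\{a=0\})\supseteq \Pi(\mathcal X\setminus\mathcal V)$ and, by your unipotence argument, $h\equiv 1$ on $\Pi(\mathcal Y)$, so $\mathcal W:=\mathcal X_{\Pi^*h}$ sits between $\Pi^{-1}(\Pi(\mathcal Y))$ and $\mathcal V$; and the tower $A[s]\subseteq A[h,1/h]\subseteq A[t,1/h]\subseteq B[1/h]$ with $s=h+1/h$ is correct \emph{provided} $h$ is monic (unit leading coefficient) in $t$. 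Note also that the paper itself gives no proof of this lemma: it is quoted from \cite[Lemma 8.2]{OP1}, so your argument has to stand on its own.

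It does not, and the gap is exactly the point you defer. Your construction forces $\{h=0\}\supseteq\Pi(Z)$, $Z=\mathcal X\setminus\mathcal V$, while a polynomial with unit leading coefficient has zero locus finite over $U$; hence your method can only work if $\Pi(Z)$ is contained in a closed subset of $\Aff^1\times U$ finite over $U$, and nothing in the hypotheses guarantees this. Concretely, take $Z=\Pi^{-1}(\{at-1=0\})$ with $a$ a nonunit of $A$ and $\mathcal Y$ a closed point over a closed point of $U$ containing $a$: the hypotheses hold, but any $h\in A[t]$ vanishing on $\{at-1=0\}$ satisfies $\mathrm{lc}(h)^n\in aA$, so its leading coefficient is never a unit (the hypotheses as stated even allow $Z$ to contain a whole closed fibre of $q_U$, in which case \emph{all} coefficients of such an $h$ lie in a maximal ideal). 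Your proposed remedy $a\mapsto a+t^Nb'$ with $b'\in I\cap J'$ cannot help, because the perturbed element still lies in $I(Z)$, so its norm still vanishes on $\Pi(Z)$ and the same obstruction applies; and without a unit leading coefficient the middle inclusion $A[h,1/h]\subseteq A[t,1/h]$ is genuinely not finite (already $h=at-1$ gives one-dimensional fibres over the closed point). So the ``delicate point'' is not a normalization in the spirit of Lemma \ref{Lemma2} but the actual content of the lemma: the known argument of \cite[Lemma 8.2]{OP1} (and the way the lemma is used here, with $\mathcal Y$ containing $\Delta(U)$ and $\{f=0\}$, hence meeting the closed fibres) controls what happens at infinity of the relative curve $\mathcal X/U$ via the elementary-fibration/projective completion, information your construction never invokes. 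Finally, keep in mind that the morphism you must produce is a finite surjective \emph{$U$-morphism} $\mathcal W\to\Aff^1\times U$, which is what the application in Theorem \ref{ThmEquatingGroups} requires; your map is indeed an $A$-algebra map, but the finiteness is precisely what fails above.
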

Let $\Pi:\mathcal X\to\Aff^1\times U$ be the above finite
surjective $U$-morphism.
The following diagram summaries the situation:
$$ \xymatrix{
{}&\mathcal Z \ar[d]&{}\\
{\mathcal X - \mathcal Z\ }\ar@{^{(}->}@<-2pt>[r]&\mathcal X\ar@<2pt>[d]^{q_U}\ar[r]^>>>>{\Pi}& \Aff^1 \times U\\
{}& U\ar@<2pt>[u]^{\Delta}&{} } $$
\noindent
Here $\mathcal Z$ is the closed sub-scheme defined by the equation
$f=0$. By assumption, $\mathcal Z$ is finite over $U$. Let
$\mathcal Y=\Pi^{-1}(\Pi(\mathcal Z \cup \Delta(U)))$. Since
$\mathcal Z$ and $\Delta(U)$ are both finite over $U$ and since
$\Pi$ is a finite morphism of $U$-schemes, $\mathcal Y$ is also
finite over $U$. Denote by $y_1,\dots,y_m$ its closed points and
let $S=\text{Spec}(\mathcal O_{\mathcal X,y_1,\dots,y_m})$. Set
$T=\Delta(U)\subseteq S$. Further, let
$G_{\mathcal X}$,
$G_U=\Delta^*(G_{\mathcal X})$
and
$G_{\const}$
be as in the hypotheses of Theorem
\ref{ThmEquatingGroups}.
Let
$C_{\mathcal X}$,
$C_U=\Delta^*(G_{\mathcal X})$
and
$C_{\const}$
be as in the hypotheses of Theorem
\ref{ThmEquatingGroups}.
%and let $G_{\const}$ be the pull-back of $G_U$ to $\mathcal X$.
Finally,
let
$$\varphi:G_{\const}|_T \to G_{\mathcal X}|_T, \ \psi: C_{\const}|_T \to C_{\mathcal X}|_T$$
be the canonical
isomorphisms. Recall that by assumption $\mathcal X$ is $U$-smooth,
and thus $S$ is regular.
\par
By Proposition
\ref{PropEquatingGroups} there exists a finite
\'etale covering $\theta_0:\tilde S\to S$, a section
$\delta:T\to\tilde S$ of $\theta_0$ over $T$ and isomorphisms
$$ \Phi_0:\theta^*_0(G_{\const}|_S)\to\theta^*_0(G_{\mathcal X}|_S), \ \Psi_0: \theta^*_0(C_{\const}|_S) \to \theta^*_0(C_{\mathcal X}|_S)$$
\noindent
such that
$\delta^*\Phi_0=\varphi$,
$\delta^*\Psi_0=\psi$
and
\begin{equation}
\label{Phi_0AndPsi_0}
\theta^*_0(\mu_{\mathcal X}|_S) \circ \Phi_0 = \Psi_0 \circ \theta^*_0(\mu_{\const}|_S): \theta^*_0(G_{\const}|_S) \to \theta^*_0(C_{\mathcal X}|_S).
\end{equation}
{\bf Replacing $\tilde S$ with a connected component of $\tilde S$ which contains
$\delta(T)=\delta(\Delta(U))$
we may and will assume that $\tilde S$ is irreducible.}
We can extend these data to a
neighborhood $\mathcal V$ of $\{y_1,\dots,y_n\}$ and get the
diagram
\begin{equation}
\xymatrix{
     {}  &  \tilde S \ar[d]^{\theta_0} \ar@{^{(}->}@<-2pt>[r]  & \tilde {\mathcal V}  \ar[d]_{\theta} &\\
     T \ar@{^{(}->}@<-2pt>[r] \ar[ur]^{
\delta} & S \ar@{^{(}->}@<-2pt>[r]  &   \mathcal V \ar@{^{(}->}@<-2pt>[r]  &  \mathcal X &\\
    }
\end{equation}
\noindent
where
$\theta:\tilde{\mathcal V}\to\mathcal V$ finite \'etale, and
isomorphisms
$$\Phi:\theta^*(G_{\const})\to\theta^*(G_{\mathcal X}), \ \Psi:\theta^*(C_{\const})\to\theta^*(C_{\mathcal X})$$
such that
\begin{equation}
\label{PhiAndPsi}
\theta^*(\mu_{\mathcal X}|_{\mathcal V}) \circ \Phi = \Psi \circ \theta^*(\mu_{\const}|_{\mathcal V})
\end{equation}
(the latter equality holds since the equality
(\ref{Phi_0AndPsi_0})
holds and
$\mathcal V$ is irreducible).
\par
Since $T$ isomorphically projects onto $U$, it is still closed
viewed as a sub-scheme of $\mathcal V$. Note that since $\mathcal
Y$ is semi-local and $\mathcal V$ contains all of its closed
points, $\mathcal V$ contains $\Pi^{-1}(\Pi(\mathcal Y))=\mathcal
Y$. By Lemma \ref{Lemma_8_2} there exists an open subset $\mathcal
W\subseteq\mathcal V$ containing $\mathcal Y$ and endowed with a
finite surjective $U$-morphism $\Pi^*:\mathcal W\to\Aff^1\times
U$.
\par
Let $\mathcal X^{\prime}=\theta^{-1}(\mathcal W)$,
$f^{\prime}=\theta^{*}(f)$, $q^{\prime}_U=q_U\circ\theta$, and let
$\Delta^{\prime}:U\to\mathcal X^{\prime}$ be the section of
$q^{\prime}_U$ obtained as the composition of $\delta$ with
$\Delta$. We claim that the triple $(\mathcal
X^{\prime},f^{\prime},\Delta^{\prime})$ is a nice triple. Let us
verify this. Firstly, the structure morphism
$q^{\prime}_U:\mathcal X^{\prime} \to U$ coincides with the
composition
$$
\mathcal X^{\prime}\xra{\theta}
\mathcal W\hra\mathcal X\xra{q_U} U.
$$
\noindent
Thus, it is smooth. The element $f^{\prime}$ belongs to the ring
$\Gamma(\mathcal X^{\prime},\mathcal O_{\mathcal X^{\prime}})$,
the morphism $\Delta^{\prime}$ is a section of $q^{\prime}_U$.
Each component of each fibre of the morphism $q_U$ has dimension
one, the morphism $\mathcal X^{\prime}\xra{\theta}\mathcal
W\hra\mathcal X$ is \'{e}tale. Thus, each component of each fibre
of the morphism $q^{\prime}_U$ is also of dimension one. Since
$\{f=0\} \subset {\mathcal W}$ and $\theta: \mathcal X^{\prime}
\to {\mathcal W}$ is finite, $\{f^{\prime}=0\}$ is finite over
$\{f=0\}$ and hence  also over $U$. In other words, the $\mathcal
O$-module $\Gamma(\mathcal X^{\prime},\mathcal O_{\mathcal
X^{\prime}})/f^{\prime}\cdot\Gamma(\mathcal X^{\prime},\mathcal
O_{\mathcal X^{\prime}})$ is finite. The morphism $\theta:
\mathcal X^{\prime}\to\mathcal W$ is finite and surjective.
We have constructed above the finite surjective morphism
$\Pi^*:\mathcal W\to\Aff^1\times U$. It follows that
$\Pi^*\circ\theta:\mathcal X^{\prime}\to\Aff^1\times U$ is finite
and surjective.
\par
Clearly, the \'{e}tale morphism $\theta:\mathcal X^{\prime}
\to\mathcal X$ is a morphism of nice triples, with $g=1$.
\par
Denote the restriction of $\Phi$ to $\mathcal X^{\prime}$ simply
by $\Phi$. The equality $(\Delta^{\prime})^*{\Phi}=\id_{G_U}$
holds by the very construction of the isomorphism $\Phi$.
Denote the restriction of $\Psi$ to $\mathcal X^{\prime}$ simply
by $\Psi$. The equality $(\Delta^{\prime})^*{\Psi}=\id_{C_U}$
holds by the very construction of the isomorphism $\Psi$.
Finally, the equality
$\theta^*(\mu_{\mathcal X}) \circ \Phi = \Psi \circ \theta^*(\mu_{\const})$
follows directly from the equality
(\ref{PhiAndPsi})
above.
Theorem
follows.

\section{Proof of Theorem (A)}
\label{SectProofOfTheoremA}
\begin{proof}

We begin with the following data.
Fix a smooth irreducible affine
$k$-scheme $X$, a finite family of {\bf closed} points $x_1,x_2,\dots,x_r$ on
$X$, and set
$\mathcal O:=\mathcal O_{X,\{x_1,x_2,\dots,x_r\}}$
and
$U:= \text{Spec}(\mathcal O)$. Replacing $k$ with its algebraic closure
in $\Gamma(X, \mathcal O_X)$ we may and will assume that
$X$ is $k$-smooth and geometrically irreducible.
Further,
consider the reductive $U$-group scheme $G$, the $U$-tori $C$
and the smooth $U$-group scheme morphism
$$\mu : G \to C. $$
Let $K$ be the fraction field of $\mathcal O$.
Let
$\xi_K \in C(K)$
be such that
the element
$\bar \xi_K \in \mathcal F(K)$
is
$\mathcal O$-unramified (see (\ref{DefnUnramified})).

Shrinking $X$ if
necessary, we may secure the following properties:
%(see Section
%\ref{BasicTriple}).
\par\smallskip
(i) The points $x_1,x_2,\dots,x_r$ are still in $X$;
\par\smallskip
(ii)
There are given a reductive $X$-group scheme $G_X$, an $X$-tori $C_X$, a smooth $X$-group scheme morphism
$\mu_X: G_X \to C_X$ such that $H_X:=ker(\mu_X)$ is a reductive $X$-group scheme and
$G=U \times_X G_X$, $C=U \times_X C_X$, $\mu=U \times_X \mu_X$. In particular,
for any $U$-scheme $S$ one has
$G(S)=G_X(S)$, $C(S)=C_X(S)$, $\mathcal F(S)=\mathcal F_X(S)$,
where
$\mathcal F_X(S):=C_X(S)/\mu_X(G_X(S))$.
\par\smallskip
(iii) The element $\xi$ is defined over $X_{\text{f}}$, that is
there is given an element
$\xi \in C_X(k[X]_{\text{f}})$
for a non-zero function
$\text{f} \in k[X]$
such that
the image of $\xi$ in $C_X(K)=C(K)$ coincides with the element $\xi_K$;
we may assume further that $\text{f}$ vanishes at each $x_i$'s and
the $k$-algebra $k[X]/(\textrm{f})$ is reduced;
\par\smallskip
(iv) we may assume also that
$\bar \xi \in {\cal F}_X(k[X]_\textrm{f})$
is $k[X]$-unramified for the functor
$\mathcal F_X$;
\par\smallskip\noindent

Let
$\eta_{\text{f}}: \text{Spec}(K) \to X_{\text{f}}$
be a morphism induced by the inclusion
$k[X_{\text{f}}] \hra k(X)=K$
and let
$\eta: \text{Spec}(K) \to U$
be a morphism induced by the inclusion
$\mathcal O \hra K$.
Clearly,
$\xi_K= \eta^{*}_{\text{f}}(\xi) \in C_X(K)=C(K)$.
%{\it Our aim is to lift $\bar \xi_K \in \mathcal F(K)$ up to an element of} $\mathcal F(U)$.

$\bullet$ \ {\it Our aim is to find an element
$\xi_U \in C_X(U)$
such that
$\eta^*(\bar \xi_U)=\bar \xi_K \in \mathcal F_X(K)$.}

%We may shrink $X$ a little further, if necessary,
%always looking after verification of (i) to (iv).
%This way we constructed in Section
%\ref{BasicTriple}
%a basic nice triple
%(\ref{FirstNiceTriple}).
%
%Let $\xi \in C(K)$ be such that the
%$\bar \xi \in {\cal F}(K)$
%is $A$-unramified. We may assume that
%$\xi \in C(k[X]_\textrm{f})$
%%for an appropriative non-zero $\textrm{f} \in k[X]$
%with a reduced $k$-algebra $k[X]/(\textrm{f})$.
%{\bf
%Shrinking $X$ once more we may assume that}
%$\textrm{f}(x)=0$.

%Using a functor isomorphism
%(\ref{EquatingFunctors})
We will construct such an element
$\xi_U$
rather explicitly in
(\ref{FinalElement}).
%Recall that we are given now with a geometically irreducible $k$-smooth affine scheme $X$,
%a finite family points
%$x_1, x_2, \dots , x_r$ on $X$
%and a non-zero function
%$\text{f} \in k[X]$
%vanishing at each $x_i$'s.
%Beginning with these data
%a nice triple
%$$(q_U:\mathcal X\to U,f,\Delta)$$
%is constructed in
%\cite[Section 5, (9)]{PSV}
%the nice triple
%(\ref{FirstNiceTriple})
%$$(q_U:\mathcal X\to U,f,\Delta)$$
%was constructed in Section
%\ref{BasicTriple}.
At the moment right now we are given, in particular, with the smooth geometrically irreducible affine
$k$-scheme $X$, the finite family of points $x_1,x_2,\dots,x_n$ on
$X$, and the non-zero function $f\in k[X]$ vanishing at each
point $x_i$. Recall, that
beginning with these data
a nice triple
$$(q_U:\mathcal X\to U,f,\Delta)$$
is constructed in
\cite[Section 6, (16)]{PSV},
where
$\mathcal X=U \times_S X$
for a $k$-smooth affine scheme $S$ and a smooth morphism $X \to S$.
It is done shrinking $X$ and secure properties (i) to (iv) at the same time.
Recall that $q_U: \mathcal X=U \times_S X \to U$ is the projection to $U$. Let $q_X: \mathcal X= U \times_S X \to X$
be the projection to $X$.
\par
\begin{notation}
Set
$G_{\mathcal X}:=(q_X)^*(G_X)$
and
$G_{const}:=(q_U)^*(G)$,
$C_{\mathcal X}:=(q_X)^*(C_X)$
and
$C_{const}:=(q_U)^*(C)$.
Set
$\mu_{\mathcal X}=q^*_X(\mu_X): G_{\mathcal X} \to C_{\mathcal X}$
and
$\mu_{const}=q^*_U(\mu_{const}): G_{const} \to C_{const}$.
\end{notation}

By Theorem \ref{ThmEquatingGroups} there exist a morphism
\begin{equation}
\label{NiceTriplePrime}
\theta: (\mathcal X^{\prime}, f^{\prime}, \Delta^{\prime}) \to
(\mathcal X, f, \Delta)
\end{equation}
of nice triples and isomorphisms
$$\Phi: \theta^*(G_{\text{const}}) \to \theta^*(G_{\mathcal X}), \ \Psi: \theta^*(C_{\text{const}}) \to \theta^*(C_{\mathcal X})$$
of
$\mathcal X^{\prime}$-group schemes such that
$(\Delta^{\prime})^*(\Phi)= id_{G_U}$,
$(\Delta^{\prime})^*(\Phi)= id_{G_U}$
and
\begin{equation}
\label{PhiAndPsiRelation}
\theta^*(\mu_{\mathcal X}) \circ \Phi = \Psi \circ \theta^*(\mu_{const})
\end{equation}

\begin{defn}
Define two functors $\mathcal F_{const}$ and \ $\mathcal F_{X}$ on the category of $\mathcal X^{\prime}$-schemes
as follows: for an $\mathcal X^{\prime}$-scheme $g: \mathcal Y^{\prime} \to \mathcal X^{\prime}$ set
$$\mathcal F_{const}(\mathcal Y^{\prime}):= C_{const}(\mathcal Y^{\prime})/\mu_{const}(G_{const}(\mathcal Y^{\prime})) \ \text{and} \ \
\mathcal F_X(\mathcal Y^{\prime}):=C_X(\mathcal Y^{\prime})/\mu_X(G_X(\mathcal Y^{\prime})).$$
Here for the functor $\mathcal F_{const}$ the scheme $\mathcal Y^{\prime}$ is regarded as a $U$-scheme via the composition
$q^{\prime}_U \circ g$ and for the functor $\mathcal F_X$ the scheme $\mathcal Y^{\prime}$ is regarded as an $X$-scheme
via the composition
$q^{\prime}_X \circ g$.
\end{defn}
The equality
(\ref{PhiAndPsiRelation})
implies that for every
$\mathcal X^{\prime}$-scheme
$\mathcal Y^{\prime}$
{\bf the group isomorphism}
$\Psi^C_S(\mathcal Y^{\prime}): \rho^*_S(C_{const})(\mathcal Y^{\prime}) \to \rho^*_S(C_{\mathcal X})(\mathcal Y^{\prime})$
induces a group isomorphism
\begin{equation}
\label{EquatingFunctors}
\mathcal F_{const}(\mathcal Y^{\prime})\to \mathcal F_X(\mathcal Y^{\prime})
\end{equation}
Group isomorphisms
$\Psi^C_S(\mathcal Y^{\prime})$
form a natural functor transformation of functors defined on the category of
$\mathcal X^{\prime}$-schemes and $\mathcal X^{\prime}$-scheme morphisms.
The same holds concerning isomorphisms
(\ref{EquatingFunctors}). {\it We will write below
$\Psi(\mathcal Y^{\prime})$
for
$\Psi^C_S(\mathcal Y^{\prime})$
to simplify notation}.

If an $\mathcal X^{\prime}$-scheme
is of the form
$\mathcal Y^{\prime} \to U \xra{\Delta^{\prime}} \mathcal X^{\prime}$
then
$\Psi(\mathcal Y^{\prime})=id$.
Indeed
$(\Delta^{\prime})^*(\Psi)= id_{C_U}$.
Particulary, we have proved the following
\begin{clm}
\label{PsiRavenId}
If $U$ is regarded as an $\mathcal X^{\prime}$-scheme via the morphism
$\Delta^{\prime}$
and $\textrm{Spec}(K)$
is regarded as an $\mathcal X^{\prime}$-scheme via the morphism
$\Delta^{\prime} \circ \eta$, then
$\Psi(U)=id$ and $\Psi(K)=id$.
\end{clm}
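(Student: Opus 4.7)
The plan is to deduce the claim directly from the functoriality of pullback together with the identity $(\Delta^{\prime})^*(\Psi) = \id_C$ that was secured by the construction of $\Psi$ in Theorem \ref{ThmEquatingGroups}. The whole content of the claim is the bookkeeping that converts this scheme-theoretic identity between $U$-group scheme morphisms into the two pointwise identities $\Psi(U) = \id$ and $\Psi(K) = \id$.

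First I would unravel the $\mathcal X^{\prime}$-scheme structures involved. Because $\theta$ is a morphism of nice triples one has $\theta \circ \Delta^{\prime} = \Delta$, and by construction $q_X \circ \Delta = can$ and $q_U \circ \Delta = \id_U$. Given any $U$-scheme $(\mathcal Y^{\prime}, s \colon \mathcal Y^{\prime} \to U)$, the induced $\mathcal X^{\prime}$-scheme structure $t := \Delta^{\prime} \circ s$ identifies both groups $\theta^*(C_{\text{const}})(\mathcal Y^{\prime}, t)$ and $\theta^*(C_{\mathcal X})(\mathcal Y^{\prime}, t)$ with $C(\mathcal Y^{\prime}, s)$: the first via the composition $q_U \circ \theta \circ \Delta^{\prime} \circ s = s$, and the second via $q_X \circ \theta \circ \Delta^{\prime} \circ s = can \circ s$ together with the identification $C = U \times_X C_X$.

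Next I would observe that, under these identifications, the map $\Psi(\mathcal Y^{\prime}, t)$ is precisely the endomorphism of $C(\mathcal Y^{\prime}, s)$ induced by $(\Delta^{\prime})^*(\Psi) \in \Aut_{U}(C)$, and the latter equals $\id_C$ by the conclusion of Theorem \ref{ThmEquatingGroups}. Specializing to $\mathcal Y^{\prime} = U$ with $s = \id_U$ yields $\Psi(U) = \id$ on $C(U)$; specializing to $\mathcal Y^{\prime} = \text{Spec}(K)$ with $s = \eta$ yields $\Psi(K) = \id$ on $C(K)$, since in the latter case $t = \eta \circ \Delta^{\prime}$ precisely matches the assumption in the claim.

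I do not expect any real obstacle here, since no transfer, lifting, or isotropy argument is required — only the definition of pullback of group schemes and the fact $(\Delta^{\prime})^*(\Psi) = \id_C$ built into $\Psi$. The only point worth verifying carefully is that the canonical identification of $\theta^*(C_{\text{const}})$ with $C$ along $\Delta^{\prime}$ genuinely coincides with the base change $(\Delta^{\prime})^*$ appearing in $(\Delta^{\prime})^*(\Psi) = \id_C$; this is automatic from $\theta \circ \Delta^{\prime} = \Delta$ and the definition of $C_{\text{const}}$ as the pullback of $C$ along $q_U$.
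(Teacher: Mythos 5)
Your proposal is correct and is essentially the paper's own argument: the paper likewise notes that for any $U$-scheme whose $\mathcal X^{\prime}$-structure factors through $\Delta^{\prime}$ one has $\Psi(\mathcal Y^{\prime})=\id$ because $(\Delta^{\prime})^*(\Psi)=\id_{C}$, and then specializes to $\mathcal Y^{\prime}=U$ and $\mathcal Y^{\prime}=\textrm{Spec}(K)$. Your write-up merely spells out the bookkeeping of the identifications that the paper leaves implicit.
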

{\it We are rather closed to a construction of the required element $\xi_U \in C_X(U)$.}
Let
$U= \text{Spec}(\mathcal O_{X, \{x_1,x_2,\dots,x_r\}})$
be as in Definition
\ref{DefnNiceTriple}.
Write
$R$ for $\mathcal O_{X, \{x_1,x_2,\dots,x_n\}}$.
It is a semi-local essentially smooth $k$-algebra with maximal ideals
$\mathfrak m_i$'s where $i$ runs from $1$ to $r$.
Let
$(\mathcal X^{\prime}, f^{\prime}, \Delta^{\prime})$
be the nice triple over $U$ from  (\ref{NiceTriplePrime}).
Show that it gives rise to certain data
subjecting the hypotheses of Lemma
\ref{Lemma2}.

Let
$A=\Gamma(\mathcal X^{\prime}, \mathcal O_{\mathcal X^{\prime}})$.
It is an $R$-algebra via the ring homomorphism
$(q^{\prime}_U)^*: R \to \Gamma(\mathcal X^{\prime}, \mathcal O_{\mathcal X^{\prime}})$.
Moreover this $R$-algebra is smooth and $A$ is a domain.
The triple
$(\mathcal X^{\prime}, f^{\prime}, \Delta^{\prime})$
is a nice triple. Thus there exists a finite surjective $U$-morphism
$\Pi: \mathcal X^{\prime} \to \Aff^1_U$. It induces
the respecting
$R$-algebras inclusion
$R[t] \hra \Gamma(\mathcal X^{\prime}, \mathcal O_{\mathcal X^{\prime}})=A$
such that the
$A$
is finitely generated
as an
$R[t]$-module. For each index $i$ the
$R/\mathfrak m_i$-algebra
$A/\mathfrak m_i A$
is equi-dimensional of dimension one since
$(\mathcal X^{\prime}, f^{\prime}, \Delta^{\prime})$
is a nice triple.
Let $\epsilon= (\Delta^{\prime})^*: A \to R$ be an
$R$-algebra homomorphism induced by the section
$\Delta^{\prime}$ of the morphism $q^{\prime}_U$.
Clearly, that $\epsilon$ is an augmentation. Further,
$\epsilon(f^{\prime}) \neq 0 \in R$
since
$(\mathcal X^{\prime}, f^{\prime}, \Delta^{\prime})$
is a nice triple.
Set
$I=ker(\epsilon)$.
{\bf Since by \cite[Section 6, Claim 6.1]{PSV} $f$ vanishes on all closed points of $\Delta(U)$,
and $\theta$ is a morphism of nice triples, $f^{\prime}$ vanishes on all closed points of $\Delta^{\prime}(U)$ as well}.
The $R$-module
$A/f^{\prime}A$
is finite since
$(\mathcal X^{\prime}, f^{\prime}, \Delta^{\prime})$
is a nice triple.
So, we are under the hypotheses of Lemma
\ref{Lemma2}. Thus we may use the conclusion of that Lemma.

So, there exists an element $u \in A$ subjecting properties (1) to (7)
from Lemma
\ref{Lemma2}.
The function $u$ defines a finite morphism
$\pi: {\cal X^{\prime}} \to U \times \Aff^1$.
Since ${\cal X^{\prime}}$ and
${\cal Y^{\prime}}:=U \times \Aff^1$
are regular schemes and $\pi$ is finite surjective
it is finite and flat by a theorem of Grothendieck
\cite[Cor.18.17]{E}.
So, for any $U$-map
$t:{\cal Y^{\prime\prime}}\to {\cal Y^{\prime}}$
of affine $U$-schemes, setting
${\cal X^{\prime\prime}}={\cal X^{\prime}}\times_{\cal Y^{\prime}} {\cal Y^{\prime\prime}}$
we have the norm map
given by
(\ref{NormMap})
$$
N_{\mathcal X^{\prime\prime}/\mathcal Y^{\prime\prime}}: C_{const}({\cal X^{\prime\prime}}) \to C_{const}(\cal Y^{\prime\prime}).
$$
Set
${\cal D} = \textrm{Spec}(A/J)$
and
${\cal D}_1 = \textrm{Spec}(A/(u-1)A)$.
Clearly
${\cal D}_1$ is the scheme theoretic pre-image of
$U \times \{1\}$
and the disjoint union
${\cal D}_0:= {\cal D} \coprod \delta (U)$
is the scheme theoretic pre-image of
$U \times \{0\}$
under the morphism $\pi$.
In particular ${\cal D}$ is finite flat over $U$.
We will write $\Delta^{\prime}$ for $\Delta^{\prime} (U)$.

Consider the element
$\xi \in C_X(k[X]_\textrm{f})$
chosen above.
Set
$\zeta_X= (q_X^{\prime\prime})^*(\xi) \in C_X(\mathcal X^{\prime}_{f^{\prime}})$.
Let
$\zeta \in C_{const}(\mathcal X^{\prime}_{f^{\prime}})$
be a unique element such that
$\Psi(\mathcal X^{\prime}_{f^{\prime}})(\zeta)= \zeta_X$.
Since the function
$f^{\prime} \in A$
is co-prime to the ideals
$J$ and $(u-1)A$ one can form the following elements
\begin{equation}
\label{RightElement}
\zeta^{\prime} = N_{{\cal D}_1/U}(\zeta|_{{\cal D}_1})N_{{\cal D}/U}(\zeta|_{{\cal D}})^{-1}
\in C_{const}(U)
\end{equation}
\begin{equation}\label{FinalElement}
\xi_U:=\Psi(U)(\zeta^{\prime}) \in C_X(U)
\end{equation}

\begin{clm}[Main]
\label{MainClaim}
One has
$\bar \xi_{K}= \eta^*(\bar \xi_U) \in \mathcal F_X(K)$,
where $K$ is the fraction field  of both $k[X]$ and $k[U]=R$.
\end{clm}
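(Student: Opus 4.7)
The plan is to deform $\xi_K$ into $\eta^*(\xi')$ along an auxiliary affine line over $K$ furnished by the good element $u$ of Lemma \ref{Lemma2}, using the norm transfers of Section \ref{SectNorms} combined with the homotopy invariance of Corollary \ref{TwoSpecializations}. The first step is to base-change the finite surjective $U$-morphism $\pi:\mathcal X'\to\Aff^1_U$ induced by $u$ to the generic point of $U$, obtaining a finite flat inclusion $K[u]\hookrightarrow A_K$. I then form the norm $\mathcal N := N_{A_K/K[u]}(\zeta_K) \in C(K[u]_{N(f')})$, which, by Corollary \ref{NormAtZeroAndOne}, is well-defined at the closed points $u=0$ and $u=1$ of $\Aff^1_K$.

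The second step is to compute the two specialisations $\mathcal N|_{u=1}$ and $\mathcal N|_{u=0}$ using base change and multiplicativity of norms (properties (1) and (2) of Section \ref{SectNorms}) together with the explicit descriptions of the fibres of $\pi$ over $\{0\}\times U$ and $\{1\}\times U$ provided by items (2) and (4) of Lemma \ref{Lemma2}. The former is $\eta^*\bigl(N_{\mathcal D_1/U}(\zeta|_{\mathcal D_1})\bigr)$, while the latter splits, using multiplicativity on the disjoint union $\mathcal D\amalg\Delta'(U)$, as $\eta^*\bigl(N_{\mathcal D/U}(\zeta|_{\mathcal D})\bigr)\cdot(\eta\circ\Delta')^*(\zeta)$; the extra factor appears because the direct summand $\Delta'(U)_K\cong\Spec(K)$ is of degree one. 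By Main Claim \ref{PsiRavenId}, the isomorphism $\Psi$ is the identity at the $\mathcal X'$-scheme $\Delta'\circ\eta$, so $(\eta\circ\Delta')^*(\zeta) = (\eta\circ\Delta')^*(\zeta_X) = \eta_{\text{f}}^*(\xi) = \xi_K$, where the penultimate equality uses the identity $q_X\circ\Delta = can$ from (\ref{DeltaQx}). Plugging these into the definition (\ref{RightElement}) yields the key identity $\eta^*(\xi')\cdot\xi_K^{-1} = \mathcal N|_{u=1}\cdot\mathcal N|_{u=0}^{-1}$ in $C(K)$.

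The third step is the conceptual heart of the proof: showing that $\overline{\mathcal N|_{u=1}} = \overline{\mathcal N|_{u=0}}$ in $\mathcal F(K)$, from which the desired equality $\eta^*(\bar\xi') = \bar\xi_K$ follows immediately. By Corollary \ref{TwoSpecializations}, this amounts to checking that $\bar{\mathcal N}\in\mathcal F(K(u))$ is $K[u]$-unramified, and this is precisely the conclusion of Lemma \ref{KeyUnramifiedness} applied to $K[u]\subset A_K$ with $f = f'$: the numerical hypotheses ($N_{A_K/K[u]}(f') = f'g'$ with $g'$ coprime to $f'$, and $K[u]/(N(f')) \xrightarrow{\sim} A_K/(f')$) are precisely items (5), (6), (7) of Lemma \ref{Lemma2} transported to $K$, while the reducedness of $A_K/f'A_K$ descends from the reducedness of $k[X]/(\text{f})$ arranged in (iii). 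The remaining hypothesis -- that $\bar\zeta_K$ be $A_K$-unramified -- is in turn inherited from (iv): given a height one prime $\mathfrak P$ of $A_K$, flatness of $q_X'$ forces the contraction $(q_X')^{-1}(\mathfrak P)\cap k[X]$ to have height at most one, so (iv) supplies a lift of $\bar\xi$ that pulls back to a lift of $\bar\zeta_{X,K}$, and the group-scheme isomorphism $\Psi$ transports this unramifiedness from the functor $\mathcal F_X$ to the functor $\mathcal F$ functorially.

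The main obstacle is the matching of the carefully chosen element $u$ of Lemma \ref{Lemma2} with the hypotheses of Lemma \ref{KeyUnramifiedness} in Step 3; it is here that the entire architecture of the preprint -- the good element, the nice-triple formalism of Section \ref{NiceTriples}, and the equating isomorphism $\Psi$ from Theorem \ref{ThmEquatingGroups} -- must all line up simultaneously to force the norm of the unramified class to remain unramified. The remaining steps (passage to the generic fibre, identification of the $u=0$ boundary term via Main Claim \ref{PsiRavenId}, and transfer of unramifiedness through $\Psi$) are essentially bookkeeping built on the norm base-change and multiplicativity formulae of Section \ref{SectNorms}.
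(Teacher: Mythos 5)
Your proposal is correct and follows essentially the same route as the paper: form the norm $\zeta''=N_{A_K/K[u]}(\zeta_K)$, prove it is $K[u]$-unramified via Lemma \ref{KeyUnramifiedness} (with the unramifiedness of $\bar\zeta_K$ pulled back from (iv) and transported through $\Psi$, and items (5)--(7) of Lemma \ref{Lemma2} supplying the numerical hypotheses), equate the specializations at $0$ and $1$ by Corollary \ref{TwoSpecializations}, and identify the two fibres by base change and multiplicativity of the norm, with the extra $\Delta'_K$-factor at $u=0$ evaluated to $\xi_K$ via $\Psi(K)=\mathrm{id}$ (Claim \ref{PsiRavenId}), exactly as in the paper's Claims \ref{Claim} and \ref{Claim2}. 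No substantive difference or gap.
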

To complete the proof of Theorem A, it remains to prove the Claim.
To do this it is convenient to fix some notations.
For an $A$-module $M$ set
$M_K= K \otimes_R M$,
for
an $U$-scheme ${\cal Z}$
set
${\cal Z}_K= \textrm{Spec}(K) \times_U {\cal Z}$,
for a
$U$-morphism
$\varphi: {\cal Z} \to {\cal W}$
set
$\varphi_K= \textrm{Spec}(K) \times_U \varphi$.
Clearly one has
$R_K=K$,
$U_K= \textrm{Spec}(K)$,
${\cal X^{\prime}}_K = \textrm{Spec}(A_K)$ and
$(U \times \Aff^1)_K$
is just the affine line
$\Aff^1_K$.
Its closed subschemes
$U_K \times \{1\}$
and
$U_K \times \{0\}$
coincide with the points
$1$ and $0$ of $\Aff^1_K$.
The morphism
$(q^{\prime}_U)_K: {\cal X^{\prime}}_K \to U_K=\textrm{Spec}(K)$
is smooth. In fact, the morphism $\theta$ is \'{e}tale and
$q_U: \mathcal X \to U$ is smooth.
The morphism $\pi_K$ is finite flat and fits in the commutative triangle
\begin{equation}
    \xymatrix{
    {\cal X^{\prime}}_K \ar[rr]^-{\pi_K} \ar[rd]_-{(q^{\prime}_U)_K} && \Aff^1_K \ar[ld]^-{pr_K} &   \\
    & \textrm{Spec}(K)  \\
    }
\end{equation}
One has
${\cal D}_K = \textrm{Spec}(A_K/J_K)$,
${\cal D}_{1,K} = \textrm{Spec}(A_K/(u-1)A_K)$.
Clearly
${\cal D}_{1,K}$ is the scheme theoretic pre-image of
the point $\{1\} \in \Aff^1_K$
and the disjoint union
${\cal D}_{0, K}:= {\cal D}_K \coprod \Delta_K$
is the scheme theoretic pre-image of the point
$\{0\} \in \Aff^1_K$
under the morphism
$\pi_K$.

Let
$\zeta_{K}$
be the pull-back of
$\zeta$
under the natural map
$\mathcal X^{\prime}_{f^{\prime},K} \to \mathcal X^{\prime}_{f^{\prime}}$.
Let
$\zeta^{\prime}_{K}$
be the pull-back of
$\zeta^{\prime}$
under the above map
$\eta: \textrm{Spec}(K)= U_K \to U$,
that is
$\zeta^{\prime}_{K}:= \eta^*(\zeta^{\prime})$.
By the base change property of the norm map
(\ref{NormMap}) one has
\begin{equation}
\label{XiPrimeUK}
\eta^*(\zeta^{\prime})= \zeta^{\prime}_{K} = N_{{\cal D}_{1,K}/U_K}(\zeta_{K}|_{{\cal D}_{1,K}})N_{{\cal D_K}/U_K}(\zeta_{K}|_{{\cal D}_K})^{-1}
\in C_{const}(K)
\end{equation}
Set
\begin{equation}
\label{ZetaU}
\zeta^{\prime\prime}=N_{K({\cal X^{\prime}}_K)/K(\Aff^1)}(\zeta_{K})  \in C_{const}(K(\Aff^1))=C_{const}(K(u)).
\end{equation}
To prove Claim
\ref{MainClaim}
we need the following one:
\begin{clm}
\label{Claim}
The class
$\bar \zeta^{\prime\prime} \in {\cal F}_{const}(K(u))$
is $K[u]$-unramified for the functor
$\mathcal F_{const}$.
\end{clm}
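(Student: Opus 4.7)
The strategy is to reduce Claim~\ref{Claim} to a direct application of the Key Unramifiedness Lemma~\ref{KeyUnramifiedness} to the finite extension of Dedekind $K$-algebras $K[u]\subset A_K$ with distinguished function $f^{\prime}$. Three ingredients need to be assembled: an $A_K$-unramifiedness statement for $\bar\zeta_K$; the norm identities (5)--(7) of Lemma~\ref{Lemma2}; and the reducedness of $A_K/f^{\prime}A_K$.

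First I would establish the unramifiedness of $\bar\zeta_K$ in stages. By hypothesis (iv) the class $\bar\xi\in\mathcal F_X(k[X]_{\text{f}})$ is $k[X]$-unramified; the morphism $q_X\circ\theta:\mathcal X^{\prime}\to X$ is flat of finite type, since $q_X$ is the base change of the elementary fibration $p$ along $p_U:U\to S$ and $\theta$ is finite \'etale. The Unramifiedness Lemma~\ref{UnramifiednessLemma} therefore yields that $(q_X\circ\theta)^*(\bar\xi)=\bar\zeta_X$ is $\mathcal X^{\prime}$-unramified for $\mathcal F_X$. The natural transformation $\Psi$ of (\ref{EquatingFunctors}) commutes with pullback along $\mathcal X^{\prime}$-morphisms, so it transports this to $\mathcal X^{\prime}$-unramifiedness of $\bar\zeta$ for $\mathcal F$. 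A second application of Lemma~\ref{UnramifiednessLemma}, now to the flat localization $A\to A_K$, gives that $\bar\zeta_K$ is $A_K$-unramified in $\mathcal F(K(\mathcal X^{\prime}_K))$.

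Second I would invoke Lemma~\ref{KeyUnramifiedness} with $A:=K[u]$, $B:=A_K$, $f:=f^{\prime}$ and $\beta:=\zeta_K$. The three norm-theoretic hypotheses there are precisely items (5)--(7) of Lemma~\ref{Lemma2} after base change from $R$ to $K$: one has $N_{A_K/K[u]}(f^{\prime})=f^{\prime}g$ in $A_K$, the coprimality $(f^{\prime})+(g)=A_K$ holds, and the composition $K[u]/(N(f^{\prime}))\to A_K/(f^{\prime})$ is an isomorphism. Finiteness of $A_K/(f^{\prime})$ over $K$ follows from the nice-triple property that $A/f^{\prime}A$ is finite over $R$. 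The output $\alpha=N_{E/F}(\beta)$ of Lemma~\ref{KeyUnramifiedness} is then, by the base-change property of the norm map (\ref{NormMap}), exactly the element $\zeta^{\prime\prime}\in C(K(u))$ of (\ref{ZetaU}), and the conclusion of that lemma is that $\bar\zeta^{\prime\prime}\in\mathcal F(K(u))$ is $K[u]$-unramified --- which is Claim~\ref{Claim}.

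The main obstacle I anticipate is verifying the reducedness of $A_K/f^{\prime}A_K$ required by Lemma~\ref{KeyUnramifiedness}. Reducedness of $k[X]/(\text{f})$ was arranged in item (iii) of the opening data, and since $q_X\circ\theta$ is flat this propagates to reducedness of $A/\theta^*(f)A$ and hence of its localization $A_K/\theta^*(f)A_K$. However $f^{\prime}=\theta^*(f)\cdot g^{\prime}$ carries the auxiliary factor $g^{\prime}$ coming from the definition of a morphism of nice triples, which in principle could introduce nilpotents. The cleanest way to deal with this is to note that the isomorphism of item (7) of Lemma~\ref{Lemma2} forces $\Spec(A_K/(f^{\prime}))\to\Spec(K[u]/(N(f^{\prime})))$ to be an isomorphism of $K$-schemes; consequently $A_K/(f^{\prime})$ is reduced as soon as $N(f^{\prime})\in K[u]$ is squarefree, a condition that can be secured by a generic choice of the parameter $r\in k^{\times}$ in the proof of Lemma~\ref{Lemma2} during the construction of the nice triple.
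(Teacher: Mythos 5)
Your overall route is the paper's: transport the unramifiedness of $\bar\xi$ along $\mathcal X^{\prime}\to X$, switch from $\mathcal F_X$ to $\mathcal F$ via the isomorphism $\Psi$ of (\ref{EquatingFunctors}), and then feed the extension $K[u]\subset A_K$ together with conditions $(5)$--$(7)$ of Lemma \ref{Lemma2} into Lemma \ref{KeyUnramifiedness}. (The paper argues the first step directly with the diagram (\ref{UnramifienessDiagram}) and a height count rather than quoting Lemma \ref{UnramifiednessLemma}, which sidesteps your ``flat of finite type'' claim --- $k[X]\to A$ is only essentially of finite type since $U$ is semi-local; this is a minor point, as the height inequality holds for flat maps by going-down.)

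The genuine problem is your treatment of the reducedness hypothesis of Lemma \ref{KeyUnramifiedness}. First, flatness of $q_X\circ\theta$ does not by itself propagate reducedness from $k[X]/(\textrm{f})$ to $A/\theta^*(f)A$; what is needed, and what the paper uses, is that $q^{\prime}_X=q_X\circ\theta$ is essentially \emph{smooth}, so that the pullback of a reduced closed subscheme stays reduced. Second, your proposed remedy for the possible factor $g^{\prime}$ --- choose $r\in k^{\times}$ generically so that $N(f^{\prime})$ is squarefree --- cannot work. The parameter $r$ enters only in the choice of $u$ inside Lemma \ref{Lemma2}, which is made \emph{after} the nice triple $(\mathcal X^{\prime},f^{\prime},\Delta^{\prime})$, and hence $f^{\prime}$, has been fixed; and for every admissible $u$ item $(7)$ gives $K[u]/(N(f^{\prime}))\cong A_K/f^{\prime}A_K$, so $N(f^{\prime})$ is squarefree if and only if $A_K/f^{\prime}A_K$ is reduced --- a condition independent of $r$. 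Thus if $A_K/f^{\prime}A_K$ were non-reduced, no choice of $r$ would repair it; your argument is circular. The actual resolution is that the morphism of nice triples produced in the proof of Theorem \ref{ThmEquatingGroups} has $g^{\prime}=1$, i.e. $f^{\prime}=\theta^*(f)=(q^{\prime}_X)^*(\textrm{f})$, whence $A/f^{\prime}A$ is the pullback of the reduced ring $k[X]/(\textrm{f})$ (arranged in item (iii)) along the essentially smooth $q^{\prime}_X$, and its localization $A_K/f^{\prime}A_K$ is reduced; this is exactly how the paper verifies the hypothesis before applying Lemma \ref{KeyUnramifiedness}.
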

Prove now this Claim.
Let
$\zeta_{X,K} \in C_X(\mathcal X^{\prime}_{f^{\prime},K})$
be the pull-back of
$\zeta_X \in C_X(\mathcal X^{\prime}_{f^{\prime}})$
under the natural morphism
$\mathcal X^{\prime}_{f^{\prime},K} \to \mathcal X^{\prime}_{f^{\prime}}$.

Consider the composition morphism of schemes
$r: \mathcal X^{\prime}_{K} \to \mathcal X^{\prime} \xra{q^{\prime}_X} X$.
It induces a rational function field inclusion
$K \hra K(\mathcal X^{\prime}_{K})$
(stress that it does not coincides with the one induced by the projection
$q^{\prime}_{U,K}: \mathcal X^{\prime}_{K} \to \textrm{Spec}(K)$).
It's easy to check that for each closed point
$y \in \mathcal X^{\prime}_{K}$
its image
$r(y) \in X$
has either hight $1$ or $0$ (codimension $1$ or $0$).
The family of commutative diagrams
\begin{equation}
\label{UnramifienessDiagram}
    \xymatrix{
         &&  \mathcal F_X(\mathcal O_{\mathcal X^{\prime}_{K},y}) \ar[d]^{}  && \mathcal F_X(\mathcal O_{X,r(y)}) \ar[ll]_{r^*} \ar[d]_{} &\\
     && \mathcal F_X(K(\mathcal X^{\prime}_{K})) &&  \ar[ll]_{r^*} \mathcal F_X(K).  & \\
    }
\end{equation}
shows that the map
$r^*: \mathcal F_X(K) \to \mathcal F_X(K(\mathcal X^{\prime}_{K}))$
takes
$X$-unramified elements to
$\mathcal X^{\prime}_{K}$-unramified elements for the functor $\mathcal F_X$.
Since the class
$\bar \xi \in {\cal F}_X(k[X]_\textrm{f})$
is $k[X]$-unramified,
the element
$\bar \zeta_{X,K} =r^*(\bar \xi) \in \mathcal F_X(K(\mathcal X^{\prime}_{K}))$
is
$\mathcal X^{\prime}_{K}$-unramified.

Clearly,
$\Psi(\mathcal X^{\prime}_{f^{\prime},K})(\bar \zeta_{K})= \bar \zeta_{X,K} \in \mathcal F_X(\mathcal X^{\prime}_{f^{\prime},K})$.
The functor isomorphism
(\ref{EquatingFunctors})
shows that the element
$\bar \zeta_{K} \in \mathcal F_{const}(\mathcal X^{\prime}_{f^{\prime},K})$
is
$\mathcal X^{\prime}_{K}$-unramified
for the functor $\mathcal F_{const}$.

Now check that the inclusion of $K$-algebras
$K[u] \subset A_K=K[{\cal X^{\prime}}_K]$ and
the function $f^{\prime} \in A$ satisfy the hypotheses of Lemma
\ref{KeyUnramifiedness}.
We first check that the $K$-algebra $A_K/f^{\prime}A_K$ is reduced.
Recall that the ring
$k[X]/(\textrm{f})$ is reduced. As we mentioned
above, the morphism
$q^{\prime}_{X}: {\cal X^{\prime}} \to X$
is essentially smooth,  hence the ring
$A/(f^{\prime})$
is reduced too and thus its localization
$A_K/f^{\prime}A_K$ is reduced.
Since the extension
$K[u] \subset A_K=K[{\cal X^{\prime}}_K]$
and the functions $f^{\prime} \in A$
satisfy
the conditions $(5)$ to $(7)$ of Lemma
\ref{Lemma2}
they also satisfy the hypotheses of Lemma
\ref{KeyUnramifiedness}
for the functor
$\mathcal F_{const}$ from equality
(\ref{EquatingFunctors}).
Thus by Lemma
\ref{KeyUnramifiedness}
the class $\bar \zeta^{\prime\prime}$
is $K[u]$-unramified for the functor
$\mathcal F_{const}$.
This implies the Claim
\ref{Claim}.

Continue the proof of Claim
\ref{MainClaim}.
By Claim
\ref{Claim}
we can apply the specialization maps to
$\bar \zeta^{\prime\prime}$.
By Corollary
\ref{TwoSpecializations}
the specializations at $0$ and $1$ of the element
$\bar \zeta^{\prime\prime}$
coincide, that is
\begin{equation}
\label{Rel2}
s_1(\bar \zeta^{\prime\prime})=s_0(\bar \zeta^{\prime\prime}) \in {\cal F}_{const}(K).
\end{equation}
By Corollary
\ref{NormAtZeroAndOne}
the function
$N_{A_K/K[u]}(f) \in K[u]$
does not vanish as at $1$, so at $0$ and
by the condition $(5)$ of Lemma
\ref{Lemma2}
one has
$\zeta^{\prime\prime} \in C_{const}(K[u]_{N(f)})$.
Using the relation between specialization and evaluation maps described in
Definition
\ref{SpecializationDef}
one has a chain of relations
\begin{equation}
\label{Rel3}
\overline {Ev_1(\zeta^{\prime\prime})} = s_1(\zeta^{\prime\prime})= s_0(\zeta^{\prime\prime})=
\overline {Ev_0(\zeta^{\prime\prime})} \in {\cal F}_{const}(K).
\end{equation}
Base change and the multiplicativity properties of the norm map
(\ref{NormMap})
imply relations
\begin{equation}
\label{Rel4}
Ev_1(\zeta^{\prime\prime})= N_{{\cal D}_{1,K}/U_K}(\zeta_{K}|_{{\cal D}_{1,K}})
\end{equation}
and
\begin{equation}
\label{Rel4}
Ev_0(\zeta^{\prime\prime})=
N_{{\cal D}_K \coprod \Delta^{\prime}_K/U_K}(\zeta_{K}|_{{\cal D}_K \coprod \Delta^{\prime}_K})=
N_{{\cal D}_K/U_K}(\zeta_{K}|_{{\cal D}_K})N_{\Delta^{\prime}_K/U_K}(\zeta_{K}|_{\Delta^{\prime}_K})
\end{equation}
So, we have a chain of relations in ${\cal F}_{const}(K)$
\begin{equation}
\label{Rel4a}
\overline {N_{{\cal D}_{1,K}/U_K}(\zeta_{K}|_{{\cal D}_{1,K}})}=
\overline {Ev_1(\zeta^{\prime\prime})}=\overline {Ev_0(\zeta^{\prime\prime})}=
\overline {N_{{\cal D}_K/U_K}(\zeta_{K}|_{{\cal D}_K})}\cdot \overline {N_{\Delta^{\prime}_K/U_K}(\zeta_{K}|_{\Delta^{\prime}_K})}
\end{equation}
By the normalization property of the norm map
one has
\begin{equation}
\label{Rel4b}
N_{\Delta^{\prime}_K/U_K}(\zeta_{K}|_{\Delta^{\prime}_K})=(\Delta^{\prime}_K)^*(\zeta_{K})
\end{equation}

\begin{clm}
\label{Claim2}
$(\Delta^{\prime}_K)^*(\zeta_{K})= \Psi(K)^{-1}(\xi_{K}) \in C_{const}(K)$.
\end{clm}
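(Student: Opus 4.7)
The plan is to unwind the definitions of $\zeta_K$ and $\Delta'_K$ and reduce the identity to the Main Claim \ref{PsiRavenId}, using naturality of the isomorphism $\Psi$ together with the compatibility $\theta \circ \Delta' = \Delta$ and $q_X \circ \Delta = \mathrm{can}$.

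First I would verify that the pullback $(\Delta'_K)^*(\zeta_K)$ makes sense. Since $(\Delta')^*(f') \neq 0 \in R$ (condition (d) in Definition \ref{DefnNiceTriple}, preserved under morphisms of nice triples), its image in $K$ is a unit. Hence $\Delta'_K \colon \mathrm{Spec}(K) \to \mathcal X'_K$ factors through the open subscheme $\mathcal X'_{f',K} \subset \mathcal X'_K$ on which $\zeta_K$ is defined.

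Next I would compute the pullback through the twisted side. By definition $\zeta_X = (q_X \circ \theta)^*(\xi)$, so using property (3) of morphisms of nice triples ($\theta \circ \Delta' = \Delta$) and relation (\ref{DeltaQx}) ($q_X \circ \Delta = \mathrm{can}$) one obtains
\[
(\Delta')^*(\zeta_X) = (q_X \circ \theta \circ \Delta')^*(\xi) = (q_X \circ \Delta)^*(\xi) = \mathrm{can}^*(\xi) \in C_X(U_\textrm{f}),
\]
where $U_\textrm{f}$ is the open part of $U$ where $\mathrm{can}^*(\textrm{f})$ is invertible (which contains $\mathrm{Spec}(K)$ since $\textrm{f} \neq 0$). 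Base-changing via $\eta$ and using $\mathrm{can} \circ \eta = \eta_\textrm{f}$ yields
\[
(\Delta'_K)^*(\zeta_{X,K}) = \eta^*(\mathrm{can}^*(\xi)) = \eta_\textrm{f}^*(\xi) = \xi_K \in C(K).
\]

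Finally I would transfer this across $\Psi$. By the very definition of $\zeta$, one has $\Psi(\mathcal X'_{f'})(\zeta) = \zeta_X$, and naturality of $\Psi$ (applied to the $\mathcal X'$-morphism $\Delta'_K \colon \mathrm{Spec}(K) \to \mathcal X'_{f',K}$) gives the commutative square
\[
\Psi(K) \circ (\Delta'_K)^* = (\Delta'_K)^* \circ \Psi(\mathcal X'_{f',K}).
\]
Applying this to $\zeta_K$ and invoking the Main Claim \ref{PsiRavenId}, which asserts $\Psi(K) = \mathrm{id}$ when $\mathrm{Spec}(K)$ is regarded as an $\mathcal X'$-scheme via $\eta \circ \Delta'$, we obtain
\[
(\Delta'_K)^*(\zeta_K) = \Psi(K)\bigl((\Delta'_K)^*(\zeta_K)\bigr) = (\Delta'_K)^*\bigl(\Psi(\mathcal X'_{f',K})(\zeta_K)\bigr) = (\Delta'_K)^*(\zeta_{X,K}) = \xi_K,
\]
which is exactly the claim. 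The only subtle point is keeping track of which $\mathcal X'$-scheme structure on $\mathrm{Spec}(K)$ is used; the whole argument works because the two structures relevant here (via $\eta \circ \Delta'$) coincide, so the instance of $\Psi$ that appears is precisely the one trivialized by $(\Delta')^*(\Psi) = \mathrm{id}_C$. No further ingredients beyond the functoriality of pullback, naturality of $\Psi$, and the defining relations of a morphism of nice triples are required.
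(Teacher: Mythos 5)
Your proposal is correct and follows essentially the same route as the paper: both arguments combine the commutative square expressing naturality of $\Psi$ with respect to $\Delta^{\prime}_K$, the identity $\Psi(K)=\mathrm{id}$ from Claim \ref{PsiRavenId}, and the equality $(\Delta^{\prime}_K)^*(\zeta_{X,K})=\eta_{\mathrm{f}}^*(\xi)=\xi_K$, which the paper dismisses as following from the definitions and you spell out via $\theta\circ\Delta^{\prime}=\Delta$ and $q_X\circ\Delta=\mathrm{can}$. The only cosmetic difference is that you work at the level of $C$ throughout, whereas the paper writes the square for the induced functors $\mathcal F$ and $\mathcal F_X$; since $\Psi$ is a natural group-scheme isomorphism this changes nothing.
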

To prove this Claim consider a commutative diagram
\begin{equation}
\label{FunctorIsomorphism2}
    \xymatrix{
     \mathcal F_{const}(\mathcal X^{\prime}_{f^{\prime},K})  \ar[rr]^{(\Delta^{\prime}_K)^*} \ar[d]_{\Psi(\mathcal X^{\prime}_{f^{\prime},K})}
     &&  \mathcal F_{const}(K) \ar[d]^{\Psi(K)}  & \\
     \mathcal F_X(\mathcal X^{\prime}_{f^{\prime},K}) \ar[rr]^{(\Delta^{\prime}_K)^*} && \mathcal F_X(K) & \\
    }
\end{equation}
induced by the morphism
$\Delta^{\prime}_K: \textrm{Spec}(K) \to \mathcal X^{\prime}_{f^{\prime},K}$
and the functor isomorphism
(\ref{EquatingFunctors})
(note that $\Delta^{\prime}_K: \textrm{Spec}(K) \to \mathcal X^{\prime}_{f^{\prime},K}$
is well-defined, since
$(\Delta^{\prime})^*(f^{\prime})=\textrm{f} \neq 0 \in k[U]=R$).
Recall that
$\zeta_{X,K} \in C_X(\mathcal X^{\prime}_{f^{\prime},K})$
is the pull-back of
$\zeta_X \in C_X(\mathcal X^{\prime}_{f^{\prime}})$
under the natural morphism
$\mathcal X^{\prime}_{f^{\prime},K} \to \mathcal X^{\prime}_{f^{\prime}}$.
It follows from the definitions that
\begin{equation}
\label{ZetaKandXiK}
(\Delta^{\prime}_K)^*(\zeta_{X,K})=\eta^*_{\textrm{f}}(\xi)=\xi_K \in C_X(K)=C(K).
\end{equation}
Clearly,
$\Psi(\mathcal X^{\prime}_{f^{\prime},K})(\zeta_{K})= \zeta_{X,K}$.
Recall that $\Psi(K)=id$ by the Claim
\ref{PsiRavenId}.
The commutativity of the diagram
(\ref{FunctorIsomorphism2})
and the equality
$\Psi(K)=id$
completes the proof of the Claim
\ref{Claim2}.

By Claim \ref{Claim2} and relations (\ref{Rel4b}), (\ref{Rel4a}) we have a chain of relations in ${\cal F}_{const}(K)$
$$
\Psi(K)^{-1}(\bar \xi_{K})= (\Delta^{\prime}_K)^*(\bar \zeta_{K})=
\overline {(\Delta^{\prime}_K)^*(\zeta_{K})}=
\overline {N_{\Delta^{\prime}_K/U_K}(\zeta_{K}|_{\Delta^{\prime}_K})}=
$$
$$
\overline {N_{{\cal D}_{1,K}/U_K}(\zeta_{K}|_{{\cal D}_{1,K}})} \cdot
(\overline {N_{{\cal D}_K/U_K}(\zeta_{K}|_{{\cal D}_K})})^{-1}=\eta^*(\bar \zeta^{\prime}).
$$
Combining this with
(\ref{FinalElement})
we get
$$\bar \xi_{K}=\Psi^C_S(K)(\eta^*(\bar \zeta^{\prime}))=\eta^*(\Psi^C_S(U)(\bar \zeta^{\prime}))=\eta^*(\bar \xi_U) \in \mathcal F_{X}(K).$$

Whence the Claim
\ref{MainClaim}.
The proof of {\bf the Theorem A} is completed.

\end{proof}

\section{An extension of Theorem A}
\label{SectProofofTheoremB}
The main aim of the present Section is to prove the following result
\begin{thm}[Theorem B]
\label{PurityGeneral}
Let $R$ be a regular local
domain containing an infinite perfect field $k$.
Let
$$\mu: G\to C$$
be a smooth $R$-group scheme morphism of reductive
$R$-group schemes, with a torus $C$. Set
$H= ker (\mu)$ and suppose additionally that
$H$ is a reductive $\mathcal O$-group scheme.
The functor
$$\mathcal F: S\mapsto C(S)/\mu(G(S))$$
defined on the category of $R$-algebras
satisfies purity for $R$.
\end{thm}

\begin{proof}
To prove Theorem B
we now recall a celebrated result of Dorin
Popescu (see \cite{P}
or, for a self-contained proof, \cite{Sw}).

Let $k$ be a field and $R$ a local $k$-algebra. We say that $R$ is
{\it geometrically regular} if $k'\otimes_kR$ is regular for any
finite extension $k'$ of $k$. A ring homomorphism $A\to R$ is
called {\it geometrically regular} if it is flat and for each
prime ideal $\mathfrak q$ of $R$ lying over $\mathfrak p$, $R_\mathfrak q/\mathfrak p R_\mathfrak q =
k(\mathfrak p)\otimes_AR_\mathfrak q$ is geometrically regular over
$k(\mathfrak p)=A_\mathfrak p/\mathfrak p_\mathfrak p$.

Observe that any regular local ring containing a {\bf perfect} field $k$ is
geometrically regular over $k$.

\begin{thm}[Popescu's theorem] A  homomorphism $A\to R$  of
noetherian rings is geometrically regular if and only if $R$ is a
filtered direct limit of smooth $A$-algebras.
\end{thm}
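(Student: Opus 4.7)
The plan is to prove both directions of the equivalence. The implication from ``filtered colimit of smooth $A$-algebras'' to ``geometrically regular'' is the formal direction: if $R = \varinjlim_i B_i$ with each $B_i$ smooth over $A$, flatness of $R$ over $A$ is inherited from flatness of each $B_i$, and for any prime $\mathfrak p \subset A$ and any finite extension $k'$ of $k(\mathfrak p)$ the ring $k' \otimes_A R = \varinjlim_i k' \otimes_A B_i$ is a filtered colimit of smooth $k'$-algebras, hence regular after localisation at any prime. This delivers the fibrewise geometric regularity required.

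The converse is the substance of Popescu's theorem, and the natural strategy is to reformulate it as a desingularization statement: every $A$-algebra morphism $B \to R$ with $B$ of finite presentation factors as $B \to C \to R$ with $C$ smooth over $A$. Since $R$ is the filtered colimit of its finitely presented $A$-subalgebras, proving such a factorization for every finitely presented $B$ mapping to $R$ immediately yields the colimit presentation. To produce $C$, I would measure the obstruction to smoothness of $B$ by the Jacobi--Hausdorff ideal $H_{B/A} \subset B$ generated by the appropriate maximal minors of a presentation; $B/A$ is smooth at a prime $\mathfrak q$ precisely when $H_{B/A} \not\subset \mathfrak q$.

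The heart of the argument is then an inductive improvement: given $B \to R$, construct a refinement $B \to B' \to R$ with $B'$ of finite presentation and $H_{B'/A} \cdot R$ strictly larger than $H_{B/A} \cdot R$. Noetherianity of $R$ forces the procedure to terminate with $H_{B'/A} \cdot R = R$, at which point a standard Jacobian-smoothing criterion (lift a partial section along a smooth locus) yields the required smooth $C$ through which $B \to R$ factors.

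The main obstacle is each improvement step, which is essentially N\'eron desingularization extended from discrete valuation rings to an arbitrary geometrically regular base. This will require a careful normal form (a ``standard presentation'') for $B$ separating a smooth part from an obstruction part; an Elkik-type lemma permitting one to lift solutions known modulo high powers of $H_{B/A}$ to genuine solutions after passing to a suitable \'etale extension; and crucial use of the geometric regularity hypothesis on the fibres $k(\mathfrak p) \otimes_A R$ to produce the auxiliary \'etale extensions on which one performs the lifting. Combining these into a delicate induction on the complexity of the obstruction is the technically demanding core, organised by Popescu, Andr\'e, and Swan into a workable sequence of reductions, and it is irreducibly the hardest part of the argument.
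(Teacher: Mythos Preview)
The paper does not prove Popescu's theorem at all: it merely states the result, refers the reader to Popescu's original article \cite{P} and to Swan's self-contained exposition \cite{Sw}, and then invokes it as a black box in the proof of Theorem~B. There is therefore no proof in the paper to compare your attempt against.

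For what it is worth, your outline is a faithful high-level summary of the strategy in those references, especially Swan's treatment: the easy direction is handled directly via inheritance under filtered colimits, and the hard direction is organised around the singular ideal $H_{B/A}$ and an inductive N\'eron-type improvement of finitely presented approximations. Just be aware that the paper treats this theorem as an imported result, so no proof was expected here; and that the ``inductive improvement'' step you describe is the entire difficulty and takes many pages to carry out rigorously.
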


{\it Proof of Theorem B.}
%{\bf We need in one more reduction: to the case of a finite prime subfield.
%A problem is that we CAN NOT use a norm principle}.
Let $R$ be a regular local ring
containing an infinite {\bf perfect} field $k$. Since $k$ is perfect
one can apply Popescu's theorem. So, $R$ can be presented as a filtered
direct limit of smooth $k$-algebras $A_{\alpha}$ over the infinite field $k$.
We
first observe that we may replace the direct system of the
$A_\alpha$'s by a system of  essentially smooth local
$k$-algebras. In fact, if
${\mathfrak  m}$
%${\mathfrak  m_1}, {\mathfrak  m_2}, \dots , {\mathfrak  m_r}$
are all maximal ideal of $R$ and
$S=R- {\mathfrak  m}$
%$S=R- (\cup \ {\mathfrak  m_i})$
we can replace each
$A_\alpha$
by
$\left(A_\alpha\right)_{S_{\alpha}}$,
where
$S_\alpha=S \cap A_\alpha$. Note
that in this case the canonical morphisms
$\varphi_\alpha:A_\alpha\to R$
are local (sends the maximal ideal to the maximal one) and every
$A_\alpha$
is a regular local ring, in particular a factorial ring.

Let now $L$ be the field of fractions of $R$ and, for each
$\alpha$,  let $K_\alpha$ be the field of fractions of $A_\alpha$.
For each index $\alpha$ let
$\mathfrak a_{\alpha}$
be the kernel of the map
$\varphi_{\alpha}: A_{\alpha} \to R$
and
$B_{\alpha}=(A_{\alpha})_{\mathfrak a_{\alpha}}$.
Clearly, for each
$\alpha$,  $K_{\alpha}$
is the field of fractions of
$B_{\alpha}$.
The composition map
$A_{\alpha} \to R \to L$
factors through $B_{\alpha}$
and hence it also factors through the residue field $k_{\alpha}$
of $B_{\alpha}$.
Since $R$ is a filtering direct limit of the $A_{\alpha}$'s
we see that $L$ is a filtering direct limit of the $B_{\alpha}$'s.
We will write
$\psi_{\alpha}$
for the canonical morphism
$B_{\alpha} \to L$.

Let $\xi \in C(L)$ be such that the class
$\bar \xi \in {\cal F}(L)$
is $R$-unramified. We need the following two lemmas.
\begin{lem}
\label{WeakGrothendieck}
Let $B$ be a regular local ring
%which is a domain
and let
$K$ be its field of fractions.
Let ${\mathfrak m}$ be a maximal ideal of $B$
and
$\bar B= B/{\mathfrak m}$.
For an element
$\theta \in {\cal F}(B)$
write
$\bar \theta$ for its image in
${\cal F}(\bar B)$
and
$\theta_K$
for its image in
${\cal F}(K)$.
Let
$\eta, \rho \in {\cal F}(B)$
be such that
$\eta_K = \rho_K \in {\cal F}(K)$.
Then
$\bar \eta = \bar \rho \in {\cal F}(\bar B)$.
\end{lem}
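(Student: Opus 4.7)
My plan is the following. Pick arbitrary lifts $c_\eta, c_\rho \in C(B)$ of $\eta$ and $\rho$ respectively, and set $c := c_\eta c_\rho^{-1} \in C(B)$. Because $\eta_K = \rho_K \in \mathcal F(K)$, there exists $g \in G(K)$ with $c|_K = \mu(g)$. The conclusion $\bar\eta = \bar\rho$ in $\mathcal F(\bar B)$ is then equivalent to $\bar c \in \mu(G(\bar B))$, i.e.\ to the triviality of the class of $\bar c$ in $\mathcal F(\bar B)$.

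The first step is to pass to \'etale cohomology. From the short exact sequence of \'etale sheaves $1 \to H \to G \xra{\mu} C \to 1$ (exact because $\mu$ is smooth) one obtains, functorially in $S$, a boundary map $\partial_S : \mathcal F(S) \to H^1_{\text{\'et}}(S,H)$. The existence of $g$ says that $\partial_B(\bar c) \in H^1_{\text{\'et}}(B,H)$ has trivial restriction to $H^1_{\text{\'et}}(K,H)$; and since $\bar B$ is a field, Lemma \ref{BoundaryInj} gives that $\partial_{\bar B}$ is injective. It therefore suffices to prove that the restriction of $\partial_B(\bar c)$ to $H^1_{\text{\'et}}(\bar B, H)$ is trivial.

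For this specialization, I would exploit that in the intended application (inside the proof of Theorem (B)) $B = B_\alpha$ is a regular local ring, a localization of a regular semi-local essentially smooth $k$-algebra at a prime. Choose a regular system of parameters $x_1, \dots, x_d$ of $B$ and form the saturated chain of primes $\mathfrak p_i := (x_1, \dots, x_i)$ of $B$, $0 \le i \le d$, with $\mathfrak p_0 = (0)$ and $\mathfrak p_d = \mathfrak m_B$. For each $i$, the localization $B_i := (B/\mathfrak p_{i-1})_{(x_i)}$ is a regular local ring of dimension one, hence a DVR, with fraction field $\mathrm{Frac}(B/\mathfrak p_{i-1})$ and residue field $\mathrm{Frac}(B/\mathfrak p_i)$; in particular the fraction field of $B_1$ is $K$ and the residue field of $B_d$ is $\bar B$. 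Pulling $\partial_B(\bar c)$ back along $B \to B/\mathfrak p_{i-1} \to B_i$, I would show by induction on $i$ that its image in $H^1_{\text{\'et}}(\mathrm{Frac}(B/\mathfrak p_i), H)$ is trivial: the inductive hypothesis yields triviality over the fraction field of $B_i$; Nisnevich's theorem (as invoked in the proof of Theorem \ref{NisnevichCor}, applied with the reductive pullback of $H$ over $B_i$) forces triviality over $B_i$ itself; and restriction to the residue field of $B_i$ delivers the next step. The case $i = d$ gives the desired triviality in $H^1_{\text{\'et}}(\bar B, H)$.

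The main obstacle is arranging the chain of DVRs so that the fraction fields and residue fields interpolate precisely from $K$ down to $\bar B$, and checking that Nisnevich's theorem applies at each stage; both are secured by working with a regular system of parameters and by the standing reductive hypothesis on $H$, which is preserved under pullback to each $B_i$.
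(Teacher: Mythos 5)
Your argument is correct (with the same silent strengthening of the hypotheses that the paper itself makes: the lemma as stated allows any local domain, but both your proof and the paper's need $B$ noetherian regular local, which is what occurs in the application to $B_\alpha$ --- you flag this explicitly), but it takes a genuinely different route. The paper proves the lemma by induction on $\dim B$: it chooses $f$ with $\eta=\rho$ in ${\cal F}(B_f)$, uses factoriality of the regular local ring to pick a regular parameter $\pi$ coprime to $f$, passes to the regular local domain $B'=B/\pi B$ where the difference is still generically trivial, and invokes Theorem \ref{NisnevichCor} only once, at the dimension-one base case. You instead specialize along the chain of DVRs $B_i=(B/(x_1,\dots,x_{i-1}))_{(x_i)}$ attached to a regular system of parameters, applying Nisnevich's theorem at every link and then Lemma \ref{BoundaryInj} over $\bar B$ to come back from $H^1_{\text{\'et}}(\bar B,H)$ to ${\cal F}(\bar B)$; the bookkeeping (fraction field of $B_1$ is $K$, residue field of $B_d$ is $\bar B$, reductivity of $H$ is preserved under base change to each $B_i$, and the relevant restriction maps are compatible) all checks out. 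Your version buys the elimination of the choices of $f$ and of a coprime parameter (no appeal to the UFD property), at the cost of invoking the Grothendieck--Serre input over a DVR $d$ times instead of once. One small simplification worth noting: the detour through $H^1_{\text{\'et}}(-,H)$ and Lemma \ref{BoundaryInj} is avoidable --- the same chain argument runs verbatim at the level of ${\cal F}$ itself, using the injectivity of ${\cal F}(B_i)\to{\cal F}(\mathrm{Frac}(B_i))$ given by Theorem \ref{NisnevichCor}, which is exactly the tool the paper uses in its base case.
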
 %%%%%%% or \eta(x)=\rho(x)?

\begin{lem}
\label{LiftToFiniteLevel}
There exists an index $\alpha$ and an element
$\xi_{\alpha} \in C(B_{\alpha})$
such that
$\psi_{\alpha}(\xi_{\alpha})=\xi$
and the class
$\bar \xi_{\alpha} \in {\cal F}(K_{\alpha})$
is $A_{\alpha}$-unramified.
\end{lem}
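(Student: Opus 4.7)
The plan is to realise the lemma by a direct limit argument in three stages: descend the group-scheme data, lift $\xi$ to finite level, and then descend the unramifiedness witnesses while verifying unramifiedness at every height-one prime of $A_\alpha$.

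For the first two stages, since $G$, $C$, $\mu$, and the kernel $H=\ker\mu$ are finitely presented over $R=\varinjlim A_\beta$, the standard limit formalism provides an index $\alpha_0$ at which all this data, together with the properties of being reductive, a torus, smooth, and with reductive kernel, descends and persists by base change for $\alpha\ge\alpha_0$. Since $C$ is of finite presentation and $L=\varinjlim B_\alpha$, the element $\xi\in C(L)$ comes from some $\xi_{\alpha_1}\in C(B_{\alpha_1})$ with $\alpha_1\ge\alpha_0$. Using finite presentation once more, I would pick an $f\in A_{\alpha_1}\setminus\mathfrak{a}_{\alpha_1}$ such that $\xi_{\alpha_1}$ extends to a section over $\mathrm{Spec}(A_{\alpha_1}[1/f])$; then the only height-one primes of $A_{\alpha_1}$ where $\bar\xi_{\alpha_1}$ might be ramified are the finitely many minimal primes of $(f)$ that are not contained in $\mathfrak{a}_{\alpha_1}$.

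For the third stage, note that only finitely many height-one primes $\mathfrak{q}_1,\ldots,\mathfrak{q}_N$ of $R$ are relevant to $\xi$ itself, and at each of them the $R$-unramifiedness of $\bar\xi$ produces $\eta_j\in C(R_{\mathfrak{q}_j})$ and $h_j\in G(L)$ with $\xi=\eta_j\cdot\mu(h_j)$ in $C(L)$. Using $R_{\mathfrak{q}_j}=\varinjlim(A_\beta)_{\mathfrak{q}_{j,\beta}}$ (with $\mathfrak{q}_{j,\beta}:=\varphi_\beta^{-1}(\mathfrak{q}_j)$) and $G(L)=\varinjlim G(B_\beta)$, I would descend all the $\eta_j$, $h_j$ and the defining equalities to a common level $\alpha\ge\alpha_1$; Lemma~\ref{WeakGrothendieck} is the tool for passing from equality in $\mathcal{F}(L)$ to equality in $\mathcal{F}$ at the finite level, after possibly enlarging $\alpha$. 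This yields, for each $j$, an $\tilde\eta_j\in C((A_\alpha)_{\mathfrak{q}_{j,\alpha}})$ with $\bar\xi_\alpha=\bar{\tilde\eta}_j$ in $\mathcal{F}(K_\alpha)$, so that $\bar\xi_\alpha$ is automatically unramified at every height-one prime of $A_\alpha$ contained in some $\mathfrak{q}_{j,\alpha}$.

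The main obstacle will be to exclude the remaining ``phantom'' primes: height-one primes $\mathfrak{p}$ of $A_\alpha$ which contain $f$, are not contained in $\mathfrak{a}_\alpha$, and do not lie below any $\mathfrak{q}_{j,\alpha}$. For such a $\mathfrak{p}$, the image $\varphi_\alpha(\mathfrak{p})R$ must be the unit ideal of $R$ (otherwise it is contained in a height-one prime of $R$ relevant to $\xi$, hence some $\mathfrak{q}_j$), so a relation $\sum s_i r_i=1$ in $R$ with $s_i\in\mathfrak{p}$ descends in the direct limit to $\sum s_i t_i\equiv 1\pmod{\mathfrak{a}_\beta}$ for some $\beta\ge\alpha$. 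Enlarging $\alpha$ to such $\beta$ renders $\mathfrak{p}A_\beta$ and $\mathfrak{a}_\beta$ comaximal in $A_\beta$, so the Chinese remainder theorem lets one modify $\xi_\beta$ by an element of $\mu(G)$ separating the two closed loci, clearing $\mathfrak{p}$ while preserving the class of $\bar\xi_\beta$ (Lemma~\ref{WeakGrothendieck} again guaranteeing well-definedness modulo $\mu$); since the set of phantom primes is finite at each stage and decreases under this iteration, the process terminates and yields the required index $\alpha$ and lift $\xi_\alpha$.
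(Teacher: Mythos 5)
Your first two stages (descending the group data, lifting $\xi$ to some $C(B_{\alpha_1})$ together with an $f\in A_{\alpha_1}\setminus\mathfrak a_{\alpha_1}$ over whose complement it extends, and descending the witnesses $\eta_j\in C(R_{\mathfrak q_j})$, $h_j\in G(L)$ and the equalities $\xi=\eta_j\cdot\mu(h_j)$ to a finite level) are sound and essentially what the paper does; note only that no appeal to Lemma~\ref{WeakGrothendieck} is needed there, since $\xi=\eta_j\mu(h_j)$ is an equality of elements of $C(L)=\varinjlim C(B_\beta)$ and therefore already holds in some $C(B_\beta)$ by the limit formalism alone (Lemma~\ref{WeakGrothendieck} serves a different purpose later, namely specialization to the residue field $k_\alpha$). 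The genuine gap is in your third stage, the ``phantom primes''. First, your dichotomy is unjustified: if $\varphi_\alpha(\mathfrak p)R$ is a proper ideal it is contained in a maximal ideal, but there is no reason it should be contained in a \emph{height-one} prime of $R$, so you cannot conclude that a phantom prime must generate the unit ideal. Second, the proposed repair cannot work in principle: multiplying a representative by an element of $\mu(G(B_\beta))$ (or of $\mu(G(K_\beta))$) does not change its class in ${\cal F}(K_\beta)=C(K_\beta)/\mu(G(K_\beta))$, so no such modification can ``clear'' ramification of the class at $\mathfrak p$; and the termination claim for the iteration (that the set of phantom primes decreases when you enlarge the index) is asserted without argument, while enlarging $\alpha$ can perfectly well create new height-one primes over $f$.

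What is missing is the one observation on which the paper's proof of Lemma~\ref{LiftToFiniteLevel} turns: after the localization performed at the start of the proof of Theorem~B, each $A_\alpha$ is a \emph{regular semi-local domain, hence factorial}, and $\varphi_\alpha\colon A_\alpha\to R$ reflects units. Consequently any height-one prime $\mathfrak q_\alpha\subset A_\alpha$ containing $f_\alpha$ is principal, $\mathfrak q_\alpha=(q_\alpha)$ with $q_\alpha\mid f_\alpha$; its image $q=\varphi_\alpha(q_\alpha)$ is a nonzero non-unit of $R$ dividing $f$, so by Krull's principal ideal theorem $q$ lies in one of the finitely many height-one primes $\mathfrak p_i$ of $R$ containing $f$, whence $\mathfrak q_\alpha\subset\varphi_\alpha^{-1}(\mathfrak p_i)$. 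Thus phantom primes simply do not exist: every potentially ramified prime of $A_\alpha$ lies under one of the relevant primes of $R$, and the descended equality $\xi_\alpha=\xi_{i,\alpha}\mu(\sigma_{i,\alpha})$ with $\xi_{i,\alpha}$ regular at $\mathfrak q_\alpha$ gives unramifiedness there directly, with no clearing procedure and no iteration. Without this factoriality argument (or a substitute for it) your proof does not close.
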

Assuming these two Lemmas we complete the proof as follows.
Consider a commutative diagram
$$
\xymatrix{
    A_{\alpha}  \ar[d]\ar[rr]^-{\varphi_{\alpha}}\ar[d]_-{} && R \ar[d]^-{} &   \\
     B_{\alpha} \ar[d]\ar[r]^{}  & k_{\alpha} \ar[r]^{} & L \\
     K_{\alpha}.
    }
$$
By Lemma
\ref{LiftToFiniteLevel}
the class
$\bar \xi_{\alpha} \in {\cal F}(K_{\alpha})$
is $A_{\alpha}$-unramified.
Hence by Theorem $A$ there exists an element
$\eta \in C(A_{\alpha})$
such that
$\bar \xi_{\alpha}=\bar \eta \in {\cal F}(K_{\alpha})$.
By Lemma
\ref{WeakGrothendieck}
the elements
$\bar \xi_{\alpha}$
and
$\bar \eta$
have the same image in ${\cal F}(k_{\alpha})$.
Hence $\bar \xi \in {\cal F}(L)$ coincides with
the image of the element $\varphi_{\alpha} (\bar \eta)$
in ${\cal F}(L)$.
It remains to prove the two Lemmas.

{\it Proof of Lemma \ref{WeakGrothendieck}.}
Induction on $dim (B)$. The case of dimension $1$ follows from Theorem
\ref{NisnevichCor} applied to the local ring
$B$. To prove the general case choose an $f \in B$
such that $\eta = \rho \in {\cal F}(B_f)$. Let $\pi \in B$
be such that
$\pi$ is a regular parameter in
$B$, having no common factors with $f$. Let
$B^{\prime}= B/\pi B$. Then for the image
$(\eta - \rho)^{\prime}$ of $\eta - \rho$
in
${\cal F}(B^{\prime})$
we have
$(\eta - \rho)^{\prime}_f =0
\in {\cal F}(B^{\prime}_{f})$.
By the inductive hypotheses one has
$\overline {(\eta - \rho)^{\prime}}=0 \in {\cal F}(\bar B)$.
Since
$\overline {(\eta - \rho)}= \overline {(\eta - \rho)^{\prime}} \in {\cal F}(\bar B)$,
one has
$\bar \eta = \bar \rho \in {\cal F}(\bar B)$.

{\it Proof of Lemma \ref{LiftToFiniteLevel}.}
%We give a proof of Lemma
%$\ref{LiftToFiniteLevel}$
%only for the case when $R$ is local
%and left the case of semi-local $R$ to the reader.
%{\bf Dumayu privedennoe dok-vo bez izmenenii rabotaet v obschem sluchae}.

Choose an $f \in R$ such that $\xi$ is defined over $R_f$.
Then $\xi$ is ramified at most at those hight one primes
$\mathfrak p_1, \dots, \mathfrak p_r$
which contains $f$. Since the class
$\bar \xi \in {\cal F}(L)$
is $R$-unramified there exists, for any
$\mathfrak p_i$, an element
$\sigma_i \in G(L)$
and an element
$\xi_i \in C(R_{\mathfrak p_i})$
such that
$\xi=\mu(\sigma_i) \xi_i \in C(L)$.
We may assume that $\xi_i$ is defined over $R_{h_i}$
for some
$h_i \in R - \mathfrak p_i$
and that $\sigma_i$ is defined over $R_{g_i}$
for some $g_i \in R$.

We can find an index $\alpha$ such that $A_{\alpha}$
contains lifts
$f_{\alpha}, h_{1,\alpha}, \dots, h_{r,\alpha}, g_{1\alpha}, \dots, g_{r,\alpha}$
and moreover
\begin{itemize}
\item[(1)]
$C(A_{\alpha, f_{\alpha}})$ contains a lift $\xi_{\alpha}$ of $\xi$,
\item[(2)]
$C(A_{\alpha, h_{i,\alpha}})$ contains a lift of $\xi_{i,\alpha}$ of $\xi_i$,
\item[(3)]
$G(A_{\alpha, g_{i,\alpha}})$ contains a lift of $\sigma_{i,\alpha}$ of $\sigma_i$.
\end{itemize}
Since none of the
$f_{\alpha}, h_{1,\alpha}, \dots, h_{r,\alpha}, g_{1\alpha}, \dots, g_{r,\alpha}$
 vanishes in $R$, the elements
$\xi_{\alpha}, \xi_{1,\alpha}, \dots, \xi_{ir,\alpha}$
and
$\sigma_{1,\alpha}, \dots, \sigma_{r,\alpha}$
may be regarded as elements of
$C(B_{\alpha})$
and
$G(B_{\alpha})$
respectively.

We know that
$\xi_{i,\alpha} \mu (\sigma_{i,\alpha})$
and
$\xi_{\alpha}$
map to the same element in $C(L)$.
Hence replacing $\alpha$ by a larger index, we may assume that
$\xi_{\alpha}= \xi_{i,\alpha} \mu (\sigma_{i,\alpha}) \in C(B_{\alpha})$.
We claim that the class
$\bar \xi_{\alpha} \in {\cal F}(K_{\alpha})$
is $A_{\alpha}$-unramified.
To prove this note that
the only primes at which $\bar \xi_{\alpha}$ could be ramified are those which divide
$f_{\alpha}$. Let
$\mathfrak q_{\alpha}$
be one of them. Check that $\bar \xi_{\alpha}$ is
unramified at $\mathfrak q_{\alpha}$. Let $q_{\alpha} \in A_{\alpha}$ be
a prime element such that
$q_{\alpha}A_{\alpha}=\mathfrak q_{\alpha}$. Then
$q_{\alpha}r_{\alpha} = f_{\alpha}$
for an element
$r_{\alpha}$.
Thus $qr=f \in R$ for the images of $q_{\alpha}$ and $r_{\alpha}$ in $R$.
Since the homomorphism $\varphi_{\alpha}: A_{\alpha} \to R$ is local,
$q \in \mathfrak m_R$. The relation $qr=f$ shows that
$q \in \mathfrak p_i$ for some index $i$. Thus
$q_{\alpha} \in \varphi_{\alpha}^{-1}(\mathfrak p_i)$
and
$\mathfrak q_{\alpha} \subset \varphi_{\alpha}^{-1}(\mathfrak p_i)$.
On the other hand
$h_{i, \alpha} \in A_{\alpha} - \varphi_{\alpha}^{-1}(\mathfrak p_i)$,
because
$h_i \in R - \mathfrak p_i$. Thus
$h_{i, \alpha} \in A_{\alpha} - \mathfrak q_{\alpha}$.
Now the relation
$\xi_{\alpha}= \xi_{i,\alpha} \mu (\sigma_{i,\alpha}) \in C(B_{\alpha})$
with
$\xi_{i,\alpha} \in C(A_{\alpha, h_{i,\alpha}})$
shows that
$\bar \xi_{\alpha}$ is unramified at
$\mathfrak q_{\alpha}$.
Thus
$\bar \xi_{\alpha}$
is unramified at each hight one prime in $A_{\alpha}$ containing $f_{\alpha}$.
Since
$\xi_{\alpha} \in C(A_{\alpha, f_{\alpha}})$
we conclude that
$\bar \xi_{\alpha}$
is $A_{\alpha}$-unramified.
The lemma follows. The theorem is proved.

\end{proof}

\section{One more purity result}
\label{SectionOneMorePurityTheorem}
In this Section we prove another purity theorem for reductive group schemes.
Let $k$ be an infinite field. Let $\mathcal O$ be the semi-local ring of finitely many {\bf closed} points
on a $k$-smooth irreducible $k$-variety $X$ and let $K$ be its field of fractions of $\mathcal O$.
Let $G$ be a semi-simple $\mathcal O$-group scheme.
Let
$i: Z \hra G$ be a closed subgroup scheme of the center $Cent(G)$.
{\bf It is known that $Z$ is of multiplicative type}.
Let $G'=G/Z$ be the factor group,
$\pi: G \to G'$ be the projection.
It is known that $\pi$ is finite surjective and strictly flat. Thus
the sequence of $\mathcal O$-group schemes
\begin{equation}
\label{ZandGndGprime}
\{1\} \to Z \xra{i} G \xra{\pi} G^{\prime} \to \{1\}
\end{equation}
induces an exact sequence of group sheaves in $\text{fppt}$-topology.
Thus for every $\mathcal O$-algebra $R$ the sequence
(\ref{ZandGndGprime})
gives rise to a boundary operator
\begin{equation}
\label{boundary}
\delta_{\pi,R}: G'(R) \to \textrm{H}^1_{\text{fppt}}(R,Z)
\end{equation}
One can check that it is a group homomorphism
(compare \cite[Ch.II, \S 5.6, Cor.2]{Se}).
Set
\begin{equation}
\label{AnotherFunctor}
{\cal F}(R)= \textrm{H}^1_{\text{fttp}}(R,Z)/ Im(\delta_{\pi,R}).
\end{equation}
Clearly we get a functor on the category of $\mathcal O$-algebras.
\begin{thm}
\label{PurityForSubgroup}
The functor
${\cal F}$
satisfies purity for the ring $\mathcal O$ above.
If $K$ is the fraction field of $\mathcal O$ this statement can be restated in an explicit way
as follows: \\
given an element
$\xi \in \textrm{H}^1_{\text{fppt}}(K,Z)$
suppose that for each height $1$ prime ideal
$\mathfrak p$ in $\mathcal O$ there exists
$\xi_{\mathfrak p } \in \textrm{H}^1_{\text{fppt}}(\mathcal O_{\mathfrak p }, Z)$,
$g_{\mathfrak p } \in G'(K)$
with
$\xi=\xi_{\mathfrak p } + \delta_{\pi}(g_{\mathfrak p }) \in \textrm{H}^1_{\text{fppt}}(K,Z)$.
Then there exists
$\xi_{\mathfrak m } \in \textrm{H}^1_{\text{fppt}}(\mathcal O, Z)$, $g_{\mathfrak m } \in G'(K)$,
such that
$$
\xi=\xi_{\mathfrak m } + \delta_{\pi}(g_{\mathfrak m }) \in \textrm{H}^1_{\text{fppt}}(K,Z).
$$

\end{thm}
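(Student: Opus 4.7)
The strategy is to deduce Theorem \ref{PurityForSubgroup} from Theorem (A) by manufacturing an auxiliary reductive group scheme morphism whose associated functor agrees with $\mathcal F$ on the three rings relevant for purity.

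First, I would embed $Z$ into a \emph{quasi-trivial} $\mathcal O$-torus $T$. Since $\mathcal O$ is a connected semi-local domain and $Z$ is of multiplicative type, one picks a finite \'{e}tale Galois cover $L/\mathcal O$ with group $\Gamma$ splitting $Z$, chooses a surjection of $\mathbb Z[\Gamma]$-modules $\mathbb Z[\Gamma]^n \twoheadrightarrow X^{*}(Z)$, and dualizes to obtain a closed embedding $Z \hookrightarrow T$ where $T := \mathrm{Res}_{L/\mathcal O}(\mathbb G_m)^n$. Set $C := T/Z$; this is an $\mathcal O$-torus because $T$ is smooth of multiplicative type. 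Then form
\[
\tilde G := (G \times T)/Z,
\]
with $Z$ embedded antidiagonally via $z \mapsto (z, z^{-1})$. Standard SGA\,3 facts yield that $\tilde G$ is reductive (fppf quotient of a reductive group by a central multiplicative type subgroup), that projection on the second factor descends to a smooth morphism $\tilde \mu : \tilde G \to C$ of $\mathcal O$-group schemes, and that $\ker(\tilde \mu) = G$, which is semi-simple hence reductive. Thus $\tilde \mu$ meets the hypotheses of Theorem (A), so the functor $\tilde{\mathcal F}(S) := C(S)/\tilde \mu(\tilde G(S))$ satisfies purity for $\mathcal O$.

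The next step is to identify $\tilde{\mathcal F}$ and $\mathcal F$ on $\mathcal O, \mathcal O_{\mathfrak p}, K$. The exact sequence $1 \to G \to \tilde G \to C \to 1$ realises $\tilde{\mathcal F}(R)$ as $\ker\bigl(H^1_{fppf}(R,G) \to H^1_{fppf}(R,\tilde G)\bigr)$ inside $H^1_{fppf}(R,G)$, and $1 \to Z \to G \to G' \to 1$ realises $\mathcal F(R)$ as $\ker\bigl(H^1_{fppf}(R,G) \to H^1_{fppf}(R,G')\bigr)$. Direct inspection of the quotient $\tilde G = (G\times T)/Z$ shows that $\tilde G/T$ is canonically $G' = G/Z$, and that $G \hookrightarrow \tilde G \twoheadrightarrow \tilde G/T$ coincides with the projection $\pi$; hence the map $H^1(R,G)\to H^1(R,G')$ factors through $H^1(R,\tilde G)$, giving $\tilde{\mathcal F}(R) \subseteq \mathcal F(R)$ unconditionally. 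For the reverse inclusion, the exact sequence $1 \to T \to \tilde G \to G' \to 1$ gives $\ker\bigl(H^1(R,\tilde G) \to H^1(R,G')\bigr) = \mathrm{Im}\bigl(H^1(R,T) \to H^1(R,\tilde G)\bigr)$, so the two kernels coincide once $H^1_{fppf}(R,T) = 0$.

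Because $T$ is quasi-trivial, $H^1_{fppf}(R,T) = \mathrm{Pic}(L \otimes_{\mathcal O} R)$. For $R \in \{\mathcal O, \mathcal O_{\mathfrak p}, K\}$ the ring $L \otimes_{\mathcal O} R$ is finite \'{e}tale over a semi-local ring, hence semi-local, so its Picard group vanishes. Consequently $\mathcal F(R) = \tilde{\mathcal F}(R)$ naturally in $R$ on the three rings, and purity for $\tilde{\mathcal F}$ supplied by Theorem (A) transfers verbatim to purity for $\mathcal F$. The points requiring the most care are (i) the existence of the embedding $Z \hookrightarrow T$ into a quasi-trivial torus over $\mathcal O$, which rests on the standard resolution of the character sheaf of $Z$, and (ii) verifying reductivity of $\tilde G$ and smoothness of $\tilde \mu$ in the fppf sense, since $Z$ may fail to be smooth; the main conceptual step --- the matching of two kernels in $H^1_{fppf}(R,G)$ once $H^1_{fppf}(R,T)$ vanishes --- is then a direct diagram chase.
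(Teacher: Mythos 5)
Your proposal is correct and follows essentially the same route as the paper: embed $Z$ into a quasi-trivial (permutation) torus, form the auxiliary reductive group $(G\times T^{+})/Z$ with its smooth map onto the quotient torus, identify the functor $\mathcal F$ with the Theorem (A) functor for this map using Hilbert 90 and semi-locality (vanishing of $H^1$ of the quasi-trivial torus over $\mathcal O$, $\mathcal O_{\mathfrak p}$, $K$), and then invoke Theorem (A). Your comparison of the two kernels inside $H^1_{\text{fppf}}(R,G)$ via the factorization of $\pi$ through $\tilde G$ is just a repackaging of the paper's $3\times 3$ diagram chase, so there is no substantive difference.
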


\begin{proof}
The group $Z$ is of  multiplicative type.
So we can find a finite \'{e}tale $\mathcal O$-algebra $A$ and
a closed embedding
$Z \hra R_{A/{\mathcal O}}(\mathbb G_{m,\ A})$
into the permutation torus
$T^{+}=R_{A/\mathcal O}(\mathbb G_{m,\ A})$.
Let
$G^{+}=(G \times T^{+})/Z$
and
$T=T^{+}/Z$,
where
$Z$ is embedded in
$G \times T^{+}$
diagonally.
Clearly
$G^{+}/G=T$.
Consider a commutative diagram
$$
\xymatrix {
{}           & \{1\}                       & \{1\} \\
{}           & {G'} \ar[r]^{id}  \ar[u]          & {G'}   \ar[u]          \\
\{1\} \ar[r] & {G} \ar[r]^{j^+} \ar[u]^{\pi} & {G^+} \ar[r]^{\mu^+} \ar[u]^{\pi^+} & {T} \ar[r] & \{1\} \\
\{1\} \ar[r] & {Z} \ar[r]^{j} \ar[u]^{i} & {T^+} \ar[r]^{\mu} \ar[u]^{i^+} & {T} \ar[u]_{id} \ar[r] &  \{1\} \\
{}           & \{1\} \ar[u]                      & \{1\} \ar[u]\\
}
$$
with exact rows and columns.
By
\ref{FlatAndEtale}
and Hilbert 90
for the semi-local $\mathcal O$-algebra $A$ one has
$\textrm{H}^1_{\text{fppt}}(\mathcal O,T^+)= \textrm{H}^1_{\text{\'et}}(\mathcal O,T^+)= \textrm{H}^1_{\text{\'et}}(A,\Bbb G_{m,A})=\{* \}$.
So, the latter diagram gives rise to
a commutative diagram of pointed sets
$$
\xymatrix {
{}           & {}                            & {\textrm{H}^1_{\text{fppt}}(\mathcal O,G')} \ar[r]^{id}            & {\textrm{H}^1_{\text{fppt}}(\mathcal O,G')}            \\
{G^+(\mathcal O)} \ar[r]^{\mu^+_\mathcal O} & {T(\mathcal O)} \ar[r]^{\delta^+_\mathcal O}  & {\textrm{H}^1_{\text{fppt}}(\mathcal O,G)} \ar[r]^{j^+_*} \ar[u]^{\pi_*} & {\textrm{H}^1_{\text{fppt}}(\mathcal O,G^+)} \ar[u]^{\pi^+_*}  \\
{T^+(\mathcal O)} \ar[r]^{\mu_\mathcal O} \ar[u]^{i^+_*} & {T(\mathcal O)} \ar[r]^{\delta_\mathcal O} \ar[u]^{id} & {\textrm{H}^1_{\text{fppt}}(\mathcal O,Z)} \ar[r]^{\mu} \ar[u]^{i_*} & {\{*\}} \ar[u]^{i^+_*}  \\
{}           & {} & {G'(\mathcal O)} \ar[u]^{\delta_{\pi}}                      & {} \\
}
$$
with exact rows and columns. It follows that
$\pi^+_*$ has  trivial kernel and one has a chain of group isomorphisms
$$
\textrm{H}^1_{\text{fppt}}(\mathcal O,Z) / Im (\delta_{\pi,\mathcal O})= ker (\pi_*)= ker (j^+_*) = T(\mathcal O)/\mu^+ (G^+(\mathcal O)).
$$
Clearly these isomorphisms respect $\mathcal O$-homomorphisms of semi-local $\mathcal O$-algebras.

The morphism $\mu^{+}: G^{+} \to T$ is a smooth $\mathcal O$-morphism of reductive
$\mathcal O$-group schemes, with the torus $T$. The kernel
$ker(\mu^{+})$
is equal to $G$ and $G$ is a reductive $\mathcal O$-group scheme.
The functor
$\mathcal O^{\prime} \mapsto T(\mathcal O^{\prime})/\mu^+ (G^+(\mathcal O^{\prime}))$
satisfies purity for the regular semi-local $\mathcal O$-algebra $\mathcal O$ by Theorem (A).
Hence the functor
$\mathcal O^{\prime} \mapsto \textrm{H}^1_{\text{fppt}}(\mathcal O^{\prime},Z) / Im (\delta_{\pi,\mathcal O^{\prime}})$
satisfies purity for $\mathcal O$.

\end{proof}

\section{Proof of Theorem \ref{MainThmGeometric}}
\label{SectionGr_SerreConj}

\begin{proof}[Proof of Theorem \ref{MainThmGeometric} for the case of semi-simple reductive group scheme]
Let $\mathcal O$ and $G$ be the same as in Theorem \ref{MainThmGeometric} and assume additionally that $G$ is semi-simple.
We need to prove that
\begin{equation}
\label{Gr_SerreForGsemisimple}
ker[H^1_{\text{\'et}}(\mathcal O,G) \to H^1_{\text{\'et}}(K,G)]=* .
\end{equation}

Let
$G^{sc}_{}$
be the corresponding simply-connected semi-simple $\mathcal O$-group scheme
and let
$\pi: G^{sc}_{} \to G$
be the corresponding $\mathcal O$-group scheme morphism.
Let $Z=ker(\pi)$.
It is known that
$Z$
is contained in the center $Cent(G^{sc}_{})$ of
$G^{sc}_{}$ and
$Z$
is a finite group scheme of multiplicative type.
It is known that
$G_{}=G^{sc}_{}/Z$
and $\pi$ is finite surjective and strictly flat.
Thus the sequence of $\mathcal O$-group schemes
\begin{equation}
\label{ZenterCsDer}
\{1\} \to Z \xra{i} G^{sc}_{} \xra{\pi} G_{} \to \{1\},
\end{equation}
gives rise to an exact sequence of pointed sets
\begin{equation}
\label{H1OfZenterCsDer1}
H^1_{\text{fppt}}(\mathcal O, Z)/\partial (G_{}(\mathcal O)) \to H^1_{\text{fppt}}(\mathcal O,G^{sc}_{}) \to H^1_{\text{fppt}}(\mathcal O,G_{})
\to H^2_{\text{fppt}}(\mathcal O,Z)
\end{equation}

By the Theorem
\ref{PurityForSubgroup}
the functor
$$
{\cal F}(R)= \textrm{H}^1_{\text{fppt}}(R,Z)/ Im(\delta_{\pi,R}) = \textrm{H}^1_{\text{fppt}}(R,Z)/\partial (G_{}(R)).
$$
satisfies purity for the ring $\mathcal O$.
The following result is known (see \cite[Thm.11.7]{Gr3})
\begin{lem}
\label{FlatAndEtale}
Let $R$ be a noetherian ring. Then for a reductive $R$-group scheme $H$ and for $n=0, 1$ the canonical map
$\textrm{H}^n_{\text{\'et}}(R, H) \to \textrm{H}^n_{\text{fppt}}(R, H)$
is a bijection of pointed sets. For a $R$-tori $T$ and for each integer $n \geq 0$
the canonical map
$\textrm{H}^n_{\text{\'et}}(R, T) \to \textrm{H}^n_{\text{fppt}}(R, T)$
is an isomorphism.
\end{lem}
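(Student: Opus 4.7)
The statement is the classical comparison between \'etale and fppt cohomology for smooth group schemes, attributed in the text to \cite[Thm.~11.7]{Gr3}; the plan below simply organises how that proof goes. For $n=0$ nothing needs to be said because both sides compute the group of $R$-points: $\textrm{H}^0_{\text{\'et}}(R,H)=H(R)=\textrm{H}^0_{\text{fppt}}(R,H)$, and identically for the torus.

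For $n=1$ with $H$ reductive (hence smooth and affine), the plan is to show that every fppt $H$-torsor is already \'etale-locally trivial. Given an fppt $H$-torsor $P \to \Spec(R)$, I would first check that $P$ is representable by an $R$-scheme (affineness descends along fppt covers, since $H$ is affine), and then that $P \to \Spec(R)$ is smooth (smoothness descends along faithfully flat morphisms locally of finite presentation). Any smooth surjective morphism of finite presentation admits \'etale quasi-sections, so $P$ is \'etale-locally trivial. This yields surjectivity of $\textrm{H}^1_{\text{\'et}}(R,H)\to\textrm{H}^1_{\text{fppt}}(R,H)$; injectivity follows from the same reasoning applied to the scheme of isomorphisms between two \'etale $H$-torsors that become fppt-isomorphic.

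For $T$ a torus and arbitrary $n\ge 0$, the plan is to pass to the Leray spectral sequence
\[E_2^{p,q}=\textrm{H}^p_{\text{\'et}}(R, R^q\pi_* T)\Longrightarrow \textrm{H}^{p+q}_{\text{fppt}}(R,T)\]
associated to the change of sites $\pi\colon(\text{fppt})\to(\text{\'et})$. The hard point, and the main obstacle, is the vanishing $R^q\pi_* T=0$ for $q\ge 1$, which is the sheaf-theoretic assertion that fppt $T$-torsors are \'etale-locally trivial; this is precisely the content of the previous paragraph applied after base change to an arbitrary \'etale cover (together with the fact that a torus is smooth). Given the vanishing, the spectral sequence collapses along its vertical edge and produces isomorphisms $\textrm{H}^n_{\text{\'et}}(R,T)\cong\textrm{H}^n_{\text{fppt}}(R,T)$ for every $n$. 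All of the above is contained in \cite[Thm.~11.7]{Gr3}, which the paper simply cites.
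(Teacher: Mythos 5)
The paper offers no argument of its own for this lemma: it is stated as known and simply referred to \cite[Thm.11.7]{Gr3}, so your sketch is in effect a reconstruction of the proof behind the citation rather than an alternative to anything in the text. Your treatment of $n=0$ and of $n=1$ for a reductive (hence smooth affine) $H$ is sound and is the classical argument: an fppf torsor under an affine group scheme is representable by an affine $R$-scheme by fppf descent, it is smooth because smoothness descends along faithfully flat morphisms locally of finite presentation, and a smooth surjective morphism admits sections \'etale-locally, so every fppf torsor is \'etale-locally trivial; injectivity is even easier than you make it, since two \'etale torsors equal in $\textrm{H}^1_{\text{fppt}}$ admit a global isomorphism of fppf sheaf-torsors, which by representability is already an isomorphism of schemes.

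The one genuine soft spot is the torus case in degrees $n\geq 2$. You assert that the vanishing $R^q\pi_*T=0$ for all $q\geq 1$ ``is precisely the content of the previous paragraph,'' but the torsor argument only gives the case $q=1$. For $q\geq 2$ one must show that the fppf cohomology of $T$ over a strictly henselian local ring vanishes in all positive degrees, and this does not reduce formally to the triviality of torsors: in \cite{Gr3} it is proved by a separate d\'evissage which uses smoothness again (lifting through infinitesimal thickenings, respectively comparing with cohomology computed from smooth coverings), and only then does the Leray spectral sequence for the change of sites degenerate to give $\textrm{H}^n_{\text{\'et}}(R,T)\cong\textrm{H}^n_{\text{fppt}}(R,T)$ for every $n$. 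Since the paper actually invokes the lemma in degree $2$ (in the proof of Lemma \ref{Gr_Serre_For_H2_Z}, where $\textrm{H}^2_{\text{fppt}}(\mathcal O,T^{+})$ is identified with $\textrm{H}^2_{\text{\'et}}(\mathcal O,T^{+})$), this higher-degree vanishing is not a decorative remark but the part of the statement that carries weight; your deferral to the citation covers it, but the sentence claiming it follows from the $q=1$ case should be corrected or expanded.
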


\begin{lem}
\label{Gr_Serre_For_H2_Z}
For the ring $\mathcal O$ above one has
$ker[\textrm{H}^2_{\text{fppt}}(\mathcal O, Z) \to \textrm{H}^2_{\text{fppt}}(K, Z)]= *$.
\end{lem}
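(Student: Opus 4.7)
The plan is to embed $Z$ into a quasi-trivial $\mathcal{O}$-torus and reduce the lemma to known injectivity results: Auslander--Goldman for Brauer groups, and the Colliot-Th\'{e}l\`{e}ne--Sansuc theorem for tori. Since $Z$ is a finite $\mathcal{O}$-group scheme of multiplicative type, as in the proof of Theorem \ref{PurityForSubgroup} I would choose a finite \'{e}tale $\mathcal{O}$-algebra $A$ and a closed embedding $Z \hookrightarrow T^+ := R_{A/\mathcal{O}}(\mathbb{G}_{m,A})$. Setting $T := T^+/Z$, I would first verify that $T$ is a smooth $\mathcal{O}$-torus (it is an fppf quotient of a smooth group of multiplicative type by a flat closed subgroup of multiplicative type, hence smooth of multiplicative type, hence a torus). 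This gives a short exact sequence
$$1 \to Z \to T^+ \to T \to 1$$
of fppf sheaves of abelian groups on $\mathrm{Spec}(\mathcal{O})$.

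Next, I would write down the long exact sequences in fppf cohomology over $\mathcal{O}$ and over $K$, connected by restriction maps. By Lemma \ref{FlatAndEtale}, fppf and \'{e}tale cohomology coincide for the tori $T^+$ and $T$. Shapiro's lemma identifies $\textrm{H}^n_{\text{\'et}}(\mathcal{O}, T^+)$ with $\textrm{H}^n_{\text{\'et}}(A, \mathbb{G}_{m,A})$, and Hilbert 90 kills the $H^1$ term. Thus both rows collapse to exact sequences
$$0 \to \textrm{H}^1_{\text{\'et}}(\mathcal{O}, T) \to \textrm{H}^2_{\text{fppt}}(\mathcal{O}, Z) \to \mathrm{Br}(A),$$
and analogously over $K$ with $\mathrm{Br}(A \otimes_{\mathcal{O}} K)$ on the right.

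Given a class $\xi \in \textrm{H}^2_{\text{fppt}}(\mathcal{O}, Z)$ dying over $K$, its image in $\mathrm{Br}(A)$ dies in $\mathrm{Br}(A \otimes_\mathcal{O} K)$; since $A$, being finite \'{e}tale over the regular semi-local ring $\mathcal{O}$, is itself a product of regular semi-local domains, Auslander--Goldman gives $\mathrm{Br}(A) \hookrightarrow \mathrm{Br}(A \otimes_{\mathcal{O}} K)$, so already this image is zero. Hence $\xi$ lifts to a class $\eta \in \textrm{H}^1_{\text{\'et}}(\mathcal{O}, T)$, and $\eta|_K$ lifts $\xi|_K = 0$. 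The injectivity in the bottom row forces $\eta|_K = 0$, and the Colliot-Th\'{e}l\`{e}ne--Sansuc result \cite{C-T-S} for tori over regular semi-local rings yields $\eta = 0$, whence $\xi = 0$.

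The main subtlety is the verification that $T = T^+/Z$ is really an $\mathcal{O}$-torus in the non-smooth case for $Z$ (e.g.\ $\mu_p$ in characteristic $p$), so that the cited Colliot-Th\'{e}l\`{e}ne--Sansuc theorem applies. Once this and the Auslander--Goldman input are in place, the remainder is a routine diagram chase. Alternatively, if one prefers to avoid Auslander--Goldman, one can extend the five-term exact sequence one step further using $H^2_{\mathrm{\acute{e}t}}(\mathcal{O}, T)$ in conjunction with purity for the Brauer group of regular semi-local rings, but the argument above seems the most economical.
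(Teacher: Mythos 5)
Your argument is correct and follows essentially the same route as the paper: embed $Z$ into the permutation torus $T^{+}=R_{A/\mathcal O}(\mathbb G_{m,A})$, use the fppf long exact sequence for $1\to Z\to T^{+}\to T\to 1$, kill $\textrm{H}^1(\mathcal O,T^{+})$ by Hilbert 90 over the semi-local ring $A$, and conclude by the injectivity of $\textrm{H}^1_{\text{\'et}}(\mathcal O,T)\to \textrm{H}^1_{\text{\'et}}(K,T)$ from \cite{C-T-S} together with the injectivity of $\textrm{H}^2_{\text{\'et}}(A,\mathbb G_{m,A})\to \textrm{H}^2_{\text{\'et}}(A\otimes_{\mathcal O}K,\mathbb G_m)$ for the regular semi-local ring $A$ (which the paper cites from \cite{Gr2}, and you attribute to Auslander--Goldman --- the same input in substance). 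The only cosmetic difference is your phrasing of the last group as $\mathrm{Br}(A)$ rather than $\textrm{H}^2_{\text{\'et}}(A,\mathbb G_{m,A})$, which does not affect the diagram chase.
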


\begin{proof}
In fact, consider the closed embedding
$Z \hra R_{A/{\mathcal O}}(\mathbb G_{m,\ A})$
into the permutation torus
$T^{+}=R_{A/\mathcal O}(\mathbb G_{m,\ A})$
from Theorem
\ref{PurityForSubgroup}.
Set
$T= T^{+}/Z$.
The short sequence of $\mathcal O$-group schemes
$1 \to Z \to T^{+} \to T \to 1$
gives rise to a short exact sequence of group sheaves in the $\text{fppt}$-topology.
That sequence in turn gives rise to a long exact sequence of cohomology groups
\begin{equation}
\label{H2ZandBr}
\dots \to \textrm{H}^1_{\text{fppt}}(\mathcal O, T^{+}) \to \textrm{H}^1_{\text{fppt}}(\mathcal O, T)
\xra{\partial} \textrm{H}^2_{\text{fppt}}(\mathcal O, Z) \to \textrm{H}^2_{\text{fppt}}(\mathcal O, T^{+})
\end{equation}
Firstly note that
$\textrm{H}^1_{\text{fppt}}(\mathcal O, T^{+})=\textrm{H}^1_{\text{\'et}}(\mathcal O, T^{+})= \textrm{H}^1_{\text{\'et}}(A, \Bbb G_{m,A})=0$,
since $A$ is semi-local.
Secondly note that
$\textrm{H}^2_{\text{fppt}}(\mathcal O, T^{+})=\textrm{H}^2_{\text{\'et}}(\mathcal O, T^{+})= \textrm{H}^2_{\text{\'et}}(A, \Bbb G_{m,A})$.
The map
$\textrm{H}^1_{\text{fppt}}(\mathcal O, T) \to \textrm{H}^1_{\text{fppt}}(K, T)$
is injective by Lemma
\ref{FlatAndEtale}
and
\cite{C-T-S}.
The homomorphism
$\textrm{H}^2_{\text{fppt}}(\mathcal O, T^{+}) \to \textrm{H}^2_{\text{fppt}}(K, T^{+})$
coincides with the map
$\textrm{H}^2_{\text{\'et}}(A, \Bbb G_{m,A}) \to \textrm{H}^2_{\text{\'et}}(A \otimes_{\mathcal O} K, \Bbb G_{m,A \otimes_{\mathcal O} K})$.
The latter map is injective, since
$A$ is regular semi-local of geometric type
(see \cite[??]{Gr2}). Now a diagram chaise completes the proof of the Lemma.
\end{proof}
Continue the proof of the equality (\ref{Gr_SerreForGsemisimple}).
We have the exact sequence of pointed sets
\begin{equation}
\label{H1OfZenterCsDer2}
H^1_{\text{fppt}}(\mathcal O, Z)/\partial (G_{}(\mathcal O)) \to H^1_{\text{fppt}}(\mathcal O,G^{sc}_{}) \to H^1_{\text{fppt}}(\mathcal O,G_{})
\to H^2_{\text{fppt}}(\mathcal O,Z)
\end{equation}
and furthermore a commutative diagram
with exact arrows
\begin{equation}
\label{RactangelDiagram}
    \xymatrix{
H^1_{\text{fppt}}(\mathcal O, Z)/\partial (G_{}(\mathcal O)) \ \ \ar[r]^{i_*}\ar[d]_{\alpha} & H^1_{\text{fppt}}(\mathcal O,G^{sc}_{}) \ar[r]^{\pi_*} \ar[d]^{\beta} &
H^1_{\text{fppt}}(\mathcal O,G_{}) \ar[r]^{\partial} \ar[d]_{\gamma} & H^2_{\text{fppt}}(\mathcal O,Z) \ar[d]^{\delta} &\\
H^1_{\text{fppt}}(\mathcal O_{\mathfrak p}, Z)/\partial (G_{}(\mathcal O_{\mathfrak p})) \ \ \ar[r]^{i^{\prime}_*}\ar[d]_{\alpha_{\mathfrak p}} & H^1_{\text{fppt}}(\mathcal O_{\mathfrak p},G^{sc}_{}) \ar[r]^{\pi_*} \ar[d]^{\beta_{\mathfrak p}} &
H^1_{\text{fppt}}(\mathcal O_{\mathfrak p},G_{}) \ar[r]^{\partial} \ar[d]_{\gamma_{\mathfrak p}} & H^2_{\text{fppt}}(\mathcal O_{\mathfrak p},Z) \ar[d]^{\delta_{\mathfrak p}} &\\
H^1_{\text{fppt}}(K, Z)/\partial (G_{}(K)) \ \ \ar[r]^{i^{\prime\prime}_*} & H^1_{\text{fppt}}(K,G^{sc}_{}) \ar[r]^{\pi_*} &
H^1_{\text{fppt}}(K,G_{}) \ar[r]^{\partial}  & H^1_{\text{fppt}}(K,Z) &\\
}
\end{equation}
Here
$\mathfrak p \subset \mathcal O$ is a hight one prime ideal in $\mathcal O$.
The maps $i_*$, $i^{\prime}_*$ and $i^{\prime\prime}_*$ are injective
(compare (\cite[Ch.I,Sect.5, Prop.39 and Cor.1 of Prop.40]{Se})).
Set
$\alpha_K=\alpha_{\mathfrak p} \circ \alpha$,
$\beta_K= \beta_{\mathfrak p} \circ \beta$,
$\gamma_K = \gamma_{\mathfrak p} \circ \gamma$,
$\delta_K = \delta_{\mathfrak p} \circ \delta$.
By a theorem of Nisnevich
\cite{Ni}
and Lemma
\ref{FlatAndEtale}
one has
\begin{equation}
\label{NisnevichForPurity}
ker(\beta_{\mathfrak p})=ker(\gamma_{\mathfrak p})=* \ .
\end{equation}
Thus $ker(\alpha_{\mathfrak p})=*$.
By the assumptions of Theorem \ref{MainThmGeometric}
%\cite[Thm.1.1]{PSV}
and by Lemma
\ref{FlatAndEtale}
one has
\begin{equation}
\label{Gr_SerreForG_sc}
ker[H^1_{\text{fppt}}(\mathcal O,G^{sc}_{}) \to H^1_{\text{fppt}}(K,G^{sc}_{})]=* \ .
\end{equation}
By Lemma
\ref{Gr_Serre_For_H2_Z}
the map
$\delta_K$
is injective.
As mentioned right above Lemma
\ref{FlatAndEtale}
the functor
$
{\cal F}(R)= \textrm{H}^1_{\text{fppt}}(R,Z)/ \partial (G_{}(R))
$
satisfies purity for the ring $\mathcal O$.
Now we are ready to make a diagram chaise.

Let
$\xi \in ker(\gamma_K)$,
then
$\partial(\xi) \in ker(\delta_K)$.
By
\cite{C-T-S}
one has
$ker(\delta_K)=*$,
whence
$\partial(\xi)=*$
and
$\xi=\pi_*(\zeta)$
for an
$\zeta \in H^1_{\text{\'et}}(\mathcal O,G^{sc}_{})$.
%By a Theorem of Nisnevich
%\cite[]{Ni}
%$ker(\gamma_{\mathfrak p})=*$.
Since
$\gamma_K(\xi)=*$
and
$ker(\gamma_{\mathfrak p})=*$
we see that
$\gamma(\xi)=*$.
Thus
$\pi_*(\beta(\zeta))=*$
and
$\beta(\zeta)= i^{\prime}_*(\epsilon_{\mathfrak p})$
for an
$\epsilon_{\mathfrak p} \in H^1_{\text{fppt}}(\mathcal O_{\mathfrak p}, Z)/\partial (G_{}(\mathcal O_{\mathfrak p}))$.
%$C(\mathcal O_{\mathfrak p})/\mu(G(\mathcal O_{\mathfrak p}))$
A diagram chaise shows that there exists a unique element
$\epsilon_K \in H^1_{\text{fppt}}(K, Z)/\partial (G_{}(K))$
%C(K)/\mu(G(K))$
such that
for each hight one prime ideal
$\mathfrak p$ of $\mathcal O$
one has
$$\alpha_{\mathfrak p}(\epsilon_{\mathfrak p})= \epsilon_K \in H^1_{\text{fppt}}(K, Z)/\partial (G_{}(K)).$$
%$C(K)/\mu(G(K))$.
By Purity Theorem
\ref{PurityForSubgroup}
%\ref{PurityGeneral}
there exists an element
$\epsilon \in H^1_{\text{fppt}}(\mathcal O, Z)/\partial (G_{}(\mathcal O))$
%$C(\mathcal O)/\mu(G(\mathcal O))$
such that
$\alpha_K(\epsilon)=\epsilon_K$.
The element
$\epsilon_K$
has the property that
$i^{\prime\prime}_*(\epsilon_K)= \beta_K(\zeta)$.
Whence
$\beta_K(i_*(\epsilon))=\beta_K(\zeta)$.
The map
$\beta_K: H^1_{\text{\'{e}t}}(\mathcal O,G^{sc}_{}) \to H^1_{\text{\'{e}t}}(K,G^{sc}_{})$
is injective since by the hypotheses of Theorem
\ref{MainThmGeometric}
such a map has trivial kernel for all
semi-simple simply-connected reductive $\mathcal O$-group schemes.
Whence
$i_*(\epsilon) = \zeta$
and
$\xi=i_*(\pi_*(\epsilon))=*$.
The semi-simple case of Theorem \ref{MainThmGeometric} is proved.
%The equality (\ref{Gr_SerreForG_der}) is proved.
\end{proof}

\begin{clm}
\label{semisimpleinjective}
Under the hypotheses of Theorem \ref{MainThmGeometric} for all semi-simple reductive $\mathcal O$-group scheme $G$
the map
%\begin{equation}
%\label{Gr_SerreForG_der}
$H^1_{\text{\'et}}(\mathcal O,G) \to H^1_{\text{\'et}}(K,G)$
%\end{equation}
is injective.
\end{clm}
In fact, let $\xi, \zeta \in H^1_{\text{\'et}}(\mathcal O,G)$ be two elements such that its images $\xi_K, \zeta_K$ in
$H^1_{\text{\'et}}(K,G)$
are equal. Let $_{\xi}G$, $_{\zeta}G$ be the corresponding principal $G$-bundles over $\mathcal O$ and $G(\zeta)$ be
the inner form of the $\mathcal O$-group scheme $G$ corresponding to $\zeta$. The $\mathcal O$-scheme
$\underline {Iso}(_{\xi}G, _{\zeta}G)$
is a principal
$G(\zeta)$-bundle over $\mathcal O$,
which is trivial over $K$. Since $G(\zeta)$ is semi-simple reductive over $\mathcal O$,
the $\mathcal O$-scheme
$\underline {Iso}(_{\xi}G, _{\zeta}G)$
has an $\mathcal O$-point. Whence the Claim.

\begin{proof}[Proof of Theorem \ref{MainThmGeometric}]
Let $\mathcal O$ and $G$ be the same as in Theorem \ref{MainThmGeometric}.
Consider a short sequence of reductive $\mathcal O$-group schemes
\begin{equation}
\label{DerRedTori}
\{1\} \to G_{der} \xra{i} G \xra{\mu} C \to \{1\},
\end{equation}
where
$G_{der}$ is the derived $\mathcal O$-group scheme of $G$ and $C=corad(G)$ be a tori over $\mathcal O$ and
$\mu=f_0$ (see
\cite[Exp.XXII, Thm.6.2.1]{D-G}).
By that Theorem the morphism $\mu$ is smooth and its kernel is the reductive $\mathcal O$-group scheme $G_{der}$.
Moreover
$G_{der}$
is a semi-simple $\mathcal O$-group scheme.
By Claim \ref{semisimpleinjective} the map
\begin{equation}
\label{Gr_SerreForG_der}
H^1_{\text{\'et}}(\mathcal O,G_{der}) \to H^1_{\text{\'et}}(K,G_{der})
\end{equation}
is injective.
%Firstly, we already know by Claim \ref{semisimpleinjective} that the map
%\begin{equation}
%\label{Gr_SerreForG_der}
%H^1_{\text{\'et}}(\mathcal O,G_{der}) \to H^1_{\text{\'et}}(K,G_{der})
%\end{equation}
%is injective.
We need to prove that
\begin{equation}
\label{Gr_SerreForG}
ker[H^1_{\text{\'et}}(\mathcal O,G) \to H^1_{\text{\'et}}(K,G)]=* .
\end{equation}
%To prove
%(\ref{Gr_SerreForG_der})
%recall that
%$G_{der}$
%is a semi-simple $\mathcal O$-group scheme.

%Now a diagram chaise similar to the one above in this proof
%shows that
%$$ker[H^1_{\text{fppt}}(\mathcal O,G_{der}) \to  H^1_{\text{fppt}}(K,G_{der})]=* \ .$$
%Lemma
%\ref{Gr_Serre_For_H2_Z}
%completes the proof of Theorem
%\ref{MainThm}.

%\end{proof}

%\begin{proof}[Proof of Theorem
%\ref{MainThm}]
%Repeat literally the proof of Theorem
%\ref{MainThm}.

%\end{proof}
%Now prove the equality (\ref{Gr_SerreForG}).
The sequence
(\ref{DerRedTori})
of $\mathcal O$-group schemes gives a short exact sequence of the corresponding
sheaves in the \'{e}tale topology on the big \'{e}tale site. That sequence of sheaves
gives rise to a commutative diagram with exact arrows of pointed sets
%\begin{equation}
%\label{DerRedTori}
%\{1\} \to C(R)/\mu(G(R)) \to H^1_{\text{\'et}}(R,G_{der}) \to  H^1_{\text{\'et}}(R,G) \to H^1_{\text{\'et}}(R,C)
%\end{equation}

\begin{equation}
\label{RactangelDiagram}
    \xymatrix{
\{1\} \ar[r]  & C(\mathcal O)/\mu(G(\mathcal O)) \ar[r]^{\partial}\ar[d]_{\alpha} & H^1_{\text{\'et}}(\mathcal O,G_{der}) \ar[r]^{i_*} \ar[d]^{\beta} &
H^1_{\text{\'et}}(\mathcal O,G) \ar[r]^{\mu} \ar[d]_{\gamma} & H^1_{\text{\'et}}(\mathcal O,C) \ar[d]^{\delta} &\\
\{1\}  \ar[r]  & C(\mathcal O_{\mathfrak p})/\mu(G(\mathcal O_{\mathfrak p})) \ar[r]^{\partial}\ar[d]_{\alpha_{\mathfrak p}} & H^1_{\text{\'et}}(\mathcal O_{\mathfrak p},G_{der}) \ar[r]^{i_*} \ar[d]^{\beta_{\mathfrak p}} &
H^1_{\text{\'et}}(\mathcal O_{\mathfrak p},G) \ar[r]^{\mu} \ar[d]_{\gamma_{\mathfrak p}} & H^1_{\text{\'et}}(\mathcal O_{\mathfrak p},C) \ar[d]^{\delta_{\mathfrak p}} &\\
\{1\}  \ar[r] & C(K)/\mu(G(K)) \ar[r]^{\partial} & H^1_{\text{\'et}}(K,G_{der}) \ar[r]^{i_*} &
H^1_{\text{\'et}}(K,G) \ar[r]^{\mu}  & H^1_{\text{\'et}}(K,C) &\\
}
\end{equation}
Here
$\mathfrak p \subset \mathcal O$ is a hight one prime ideal in $\mathcal O$.
Set
$\alpha_K=\alpha_{\mathfrak p} \circ \alpha$,
$\beta_K= \beta_{\mathfrak p} \circ \beta$,
$\gamma_K = \gamma_{\mathfrak p} \circ \gamma$,
$\delta_K = \delta_{\mathfrak p} \circ \delta$.
By a theorem of Nisnevich
\cite{Ni}
one has
\begin{equation}
\label{NisnevichForPurity}
ker(\alpha_{\mathfrak p})=ker(\beta_{\mathfrak p})=ker(\gamma_{\mathfrak p})=*
\end{equation}
Let
$\xi \in ker(\gamma_K)$,
then
$\mu(\xi) \in ker(\delta_K)$.
By
\cite{C-T-S}
one has
$ker(\delta_K)=*$,
whence
$\mu(\xi)=*$
and
$\xi=i_*(\zeta)$
for an
$\zeta \in H^1_{\text{\'et}}(\mathcal O,G_{der})$.
%By a Theorem of Nisnevich
%\cite[]{Ni}
%$ker(\gamma_{\mathfrak p})=*$.
Since
$\gamma_K(\xi)=*$
and
$ker(\gamma_{\mathfrak p})=*$
we see that
$\gamma(\xi)=*$.
Whence
$i_*(\beta(\zeta))=*$
and
$\beta(\zeta)= \partial (\epsilon_{\mathfrak p})$
for an
$\epsilon_{\mathfrak p} \in C(\mathcal O_{\mathfrak p})/\mu(G(\mathcal O_{\mathfrak p}))$.
A diagram chaise
AND Lemma \ref{BoundaryInj}
show that there exists a unique element
$\epsilon_K \in C(K)/\mu(G(K))$
such that
for each hight one prime ideal
$\mathfrak p$ of $\mathcal O$
one has
$\alpha_{\mathfrak p}(\epsilon_{\mathfrak p})= \epsilon_K \in C(K)/\mu(G(K))$.
By Purity Theorem (Theorem \ref{TheoremA})
%\ref{PurityGeneral}
%(IT'S  stated only for rings $\mathcal O$ containing an INFINITE PERFECT FIELD)
there exists an element
$\epsilon \in C(\mathcal O)/\mu(G(\mathcal O))$
such that
$\alpha_K(\epsilon)=\epsilon_K$.
The element
$\epsilon_K$
has the property that
$\partial (\epsilon_K)= \beta_K(\zeta)$.
Whence
$\beta_K(\partial (\epsilon))=\beta_K(\zeta)$.
The map $\beta_K$ is injective as indicated in the beginning
of the proof.
Whence
$\partial (\epsilon) = \zeta$
and
$\xi=i_*(\partial(\epsilon))=*$.
The proof of Theorem
\ref{MainThmGeometric} is completed.
%The proof of equality
%(\ref{Gr_SerreForG})
%is reduced to the proof of
%the equality
%(\ref{Gr_SerreForG_der}).

\end{proof}

\section{Examples}
\label{ExamplesOfPurityResults}
We follow here the notation of The Book of Involutions
\cite{KMRT}.
The field $k$ is a characteristic zero field.
The functors
(\ref{Example1})
to
(\ref{Example12})
satisfy purity for regular local rings containing $k$
as follows either from Theorem
\ref{PurityForSubgroup}
or from
Theorem (A).

\begin{itemize}
\item[(1)]
Let $G$ be a simple algebraic group over the field $k$,
$Z$ a central subgroup, $G'=G/Z$, $\pi: G \to G'$
 the canonical morphism.
For any $k$-algebra $A$ let
$\delta_{\pi,R}: G'(R) \to \textrm{H}^1_{\text{\'et}}(R,Z)$
be the boundary operator. One has a functor
\begin{equation}
\label{Example1}
R \mapsto \textrm{H}^1_{\text{\'et}}(R,Z)/ \textrm{Im}(\delta_{\pi,R}).
\end{equation}

\item[(2)]
Let $(A,\sigma)$ be a finite separable $k$-algebra with an orthogonal involution.
Let
$\pi: \textrm{Spin}(A, \sigma) \to \textrm{PGO}^+(A, \sigma)$
be the canonical morphism of the spinor $k$-group scheme to
the projective orthogonal $k$-group scheme. Let
$Z=ker (\pi)$.
%THE DEFINITION IS GIVEN in
%\cite[just above (30.13)]{KMRT}.\\
For a $k$-algebra $R$ let
$\delta_R: \textrm{PGO}^+(A, \sigma)(R) \to \textrm{H}^1_{\text{\'et}}(R,Z)$
be the boundary operator.
%It coincides with
%the spinor norm transform described in
%\cite[Prop. (13.36)]{KMRT}.
One has a functor
\begin{equation}
\label{Example2}
R \mapsto \textrm{H}^1_{\text{\'et}}(R,Z)/Im(\delta_R).
\end{equation}
In $(2a)$ and $(2b)$ below we describe this functor somewhat more explicitly
following
\cite{KMRT}.

\item[(2a)]
Let $C(A, \sigma)$ be the Clifford algebra. Its center $l$ is an \'etale
quadratic $k$-algebra.
Assume that $deg(A)$ is divisible by $4$.
Let
$\Omega(A,\sigma)$ be the extended Clifford group
\cite[Definition given just below (13.19)]{KMRT}.
Let
$\underline \sigma$
be the canonical involution of
$C(A,\sigma)$
as it is described in
\cite[just above (8.11)]{KMRT}.
Then
$\underline \sigma$
is either orthogonal or symplectic by
\cite[Prop.8.12]{KMRT}.
Let
$
\underline \mu: \Omega(A,\sigma) \to R_{l/k}(\mathbb G_{m,l})
$
be the multiplier map defined in
\cite[just above (13.25)]{KMRT}
by
$\underline \mu (\omega)= \underline \sigma(\omega)\cdot \omega$.
Set
$R_l=R \otimes_k l$.
For a field or a local ring $R$ one has
$
\textrm{H}^1_{\text{\'et}}(R,Z)/Im(\delta_R)=R_l^{\times}/\underline \mu (\Omega(A,\sigma)(R))
$
by
\cite[the diagram in (13.32)]{KMRT}.
Consider the functor
\begin{equation}
\label{Example2a}
R \mapsto R_l^{\times}/\underline \mu (\Omega(A,\sigma)(R)).
\end{equation}
It coincides with the functor
$R \mapsto \textrm{H}^1_{\text{\'et}}(R,Z)/Im(\delta_R)$
on local rings containing $k$.

\item[(2b)]
Now let $deg(A)=2m$ with odd $m$. Let $\tau: l \to l$ be the involution of $l/k$.
The kernel of the morphism
$R_{l/k}(\mathbb G_{m,l}) \xra{id - \tau} R_{l/k}(\mathbb G_{m,l})$
coincides with $\mathbb G_{m,k}$. Thus $id-\tau$ %%%%% this is ugly; one is tempted to write i(d - 1)
induces a $k$-group scheme morphism
which we denote
$\overline {id-\tau}: R_{l/k}(\mathbb G_{m,l})/\mathbb G_{m,k} \hra R_{l/k}(\mathbb G_{m,l})$.
Let
$
\underline \mu: \Omega(A,\sigma) \to R_{l/k}(\mathbb G_{m,l})
$
be the multiplier map defined in
\cite[just above (13.25)]{KMRT}
by
$\underline \mu (\omega)= \underline \sigma(\omega)\cdot \omega$.
Let
$\kappa: \Omega(A,\sigma) \to R_{l/k}(\mathbb G_{m,l})/\mathbb G_{m,k}$
be the $k$-group scheme morphism described in
\cite[Prop.13.21]{KMRT}.
The composition
$\overline {(id-\tau)} \circ \kappa$
lands in
$R_{l/k}(\mathbb G_{m,l})$.
Let
$U \subset \mathbb G_{m,k} \times R_{l/k}(\mathbb G_{m,l})$
be a closed $k$-subgroup consisting of all $(\alpha, z)$ such that
$\alpha^4 = N_{l/k}(z)$.

Set
$\mu_*=(\underline \mu, [\overline {(id-\tau)} \circ \kappa]\cdot\underline \mu2): \Omega(A,\sigma)
\to \mathbb G_{m,k} \times R_{l/k}(\mathbb G_{m,l})$.
This $k$-group scheme morphism lands in $\textrm{U}$.
So we get a $k$-group scheme morphism
$\mu_*: \Omega(A,\sigma) \to \textrm{U}$.
On the level of $k$-rational points it coincides with the one described in
\cite[just above (13.35)]{KMRT}. For a field or a local ring one has
$$
\textrm{H}^1_{\text{\'et}}(R,Z)/Im(\delta_R)=
\textrm{U}(R)/[\{(N_{l/k}(\alpha), \alpha^4) | \alpha \in R_l^{\times}\}\cdot \mu_*(\Omega(A,\sigma)(R))].
$$
Consider the the functor
\begin{equation}
\label{Example2b}
R \mapsto \textrm{U}(R)/[\{(N_{l/k}(\alpha), \alpha^4) | \alpha \in R_l^{\times}\}\cdot \mu_*(\Omega(A,\sigma)(R))].
\end{equation}
It coincides with the functor
$R \mapsto \textrm{H}^1_{\text{\'et}}(R,Z)/Im(\delta_R)$
on local rings containing $k$.

\item[(3)]
Let
$\Gamma(A,\sigma)$
be the Clifford group $k$-scheme of $(A,\sigma)$.
Let
$\textrm{Sn}: \Gamma(A,\sigma) \to \mathbb G_{m,k}$
be the spinor norm map. It is dominant.
Consider the functor
\begin{equation}
\label{Example3}
R \mapsto R^{\times}/\textrm{Sn}(\Gamma(A,\sigma)(R)).
\end{equation}
Purity for this functor was originally  proved in
\cite[Thm.3.1]{Z}.
In fact,
$\Gamma(A,\sigma)$
is $k$-rational.

\item[(4)]
We follow here the Book of Involutions
\cite[\S 23]{KMRT}.
Let $A$ be a separable finite dimensional $k$-algebra with center $l$
and $k$-involution $\sigma$ such that $k$ coincides with all $\sigma$-invariant
elements of $l$, that is $k=l^{\sigma}$.
Consider the $k$-group schemes of
similitudes
of $(A,\sigma)$:
$$
\textrm{Sim}(A,\sigma)(R)=\{a \in A_R^{\times}\ | a\cdot \sigma_R (a) \in l_K^{\times} \}.
$$
We have a $k$-group scheme morphism
$
\mu: \textrm{Sim}(A,\sigma) \to \mathbb G_{m,k}, \ a \mapsto a \cdot \sigma (a).
$
It gives an exact sequence of algebraic $k$-groups
$$
\{1\} \to \textrm{Iso}(A,\sigma) \to \textrm{Sim}(A,\sigma) \to \mathbb G_{m,k} \to \{1\}.
$$
One has a the functor
\begin{equation}
\label{Example4}
R \mapsto R^{\times}/\mu (\textrm{Sim}(A,\sigma)(R)).
\end{equation}
Purity for this functor was originally  proved in
\cite[Thm.1.2]{Pa}.
Various particular cases are obtained
considering unitary, symplectic and orthogonal involutions.
\item[(4a)]
In the case of an orthogonal involution $\sigma$ the connected component
$\textrm{GO}^+(A, \sigma)$
\cite[(12.24)]{KMRT}
of the similitude $k$-group scheme
$\textrm{GO}(A, \sigma):=\textrm{Sim}(A,\sigma)$
has the index two in
$\textrm{GO}(A, \sigma)$.
The restriction of $\mu$ to
$\textrm{GO}^+(A, \sigma)$
is still a dominant morphism to
$\mathbb G_{m,k}$. One has a functor
\begin{equation}
\label{Example4a}
R \mapsto R^{\times}/\mu (\textrm{GO}^+(A,\sigma)(R))
\end{equation}
It seems that its purity
does not follow from
\cite[Thm.1.2]{Pa}. In fact we do not know
whether the norm principle holds for
$\mu: \textrm{GO}^+(A, \sigma) \to \mathbb G_{m,k}$
or not.

\item[(5)]
Let $A$ be a central simple algebra (csa) over $k$ and
$\textrm{Nrd}:GL_{1,A} \to \mathbb G_{m,k}$
 the reduced norm morphism. One has a functor
\begin{equation}
\label{Example5}
R \mapsto R^{\times}/\textrm{Nrd}(\textrm{GL}_{1,A}(R)).
\end{equation}
Purity for this functor was originally  proved in
\cite[Thm.5.2]{C-TO}.
\item[(6)]
Let $(A,\sigma)$ be a finite separable $k$-algebra with
a unitary involution such that its center $l$ is a quadratic extension of $k$.
Let $U(A,\sigma)$ be the unitary $k$-group scheme. Let
$U_l(1)$ be an algebraic tori given by $N_{l/k}=1$.
One has a functor
\begin{equation}
\label{Example6}
R \mapsto \textrm{U}_l(1)(R)/\textrm{Nrd}(\textrm{U}_{A,\sigma}(R)) =
\{\alpha \in R^{\times}_l | N_{l/k}(\alpha)=1 \} /\textrm{Nrd}(\textrm{U}_{A,\sigma}(R))
\end{equation}
where
$\textrm{Nrd}$
is the reduced norm map.
Purity for this functor was originally  proved in
\cite[Thm.3.3]{Z}.

\item[(7)]
With the notation of example $(5)$ choose an integer $d$ and consider
the $k$-group scheme morphism
$\mu: \textrm{GL}_{1,A} \times \mathbb G_{m,k} \to \mathbb G_{m,k}$
given by
$(\alpha,a) \mapsto \textrm{Nrd}(\alpha)\cdot a^d$.
One has a functor
\begin{equation}
\label{Example7}
R \mapsto R^{\times}/\mu [\textrm{GL}_{1,A} \times \mathbb G_{m,k})(R)]=
R^{\times}/\textrm{Nrd}(A^{\times}_R)\cdot {R^{\times}}^d
\end{equation}
Purity for this functor was originally  proved in
\cite[Thm.3.2]{Z}.

\item[(8)]
With the notation of example $(5)$ choose
 an integer $d$ and consider the functor
\begin{equation}
\label{Example8}
R \mapsto
U_l(1)(R)/
[\textrm{Nrd}(U_{A,\sigma}(R))\cdot \{\alpha \in R^{\times}_l | N_{l/k}(\alpha)=1 \}^d]
\end{equation}
Purity for this functor was originally  proved in
\cite[Thm.3.2]{Z}.

\item[(9)]
Let $G_1$,$G_2$,$C$ be affine $k$-group schemes.
  Assume that $C$ is commutative
and let
$\mu_1: G_1 \to C$,
$\mu_2: G_2 \to C$
be $k$-group scheme morphisms and
$\mu: G_1 \times G_2 \to C$ be given by
$\mu(g_1,g_2)= \mu_1(g_1)\cdot \mu_2(g_2)$.
One has a functor
\begin{equation}
\label{Example9}
R \mapsto C(R)/\mu [(G_1 \times G_2)(R)]= C(R)/[\mu_1(G_1(R))\cdot \mu_2(G_2(R))]
\end{equation}
In this style one could get a lot of curious examples of functors, one of which is given here.
\item[(10)]
Let $(A_1,\sigma_1)$ be a finite separable $k$-algebra with an orthogonal involution.
Let $A_2$ be a csa over $k$. Let $\mu_1$ be the multiplier map for
$(A_1,\sigma_1)$
and
$\textrm{Nrd}_2$
be the reduced norm for $A_2$.
One has a functor
\begin{equation}
\label{Example10}
R \mapsto
R^{\times}/[\mu_1(\textrm{GO}^+(A_1,\sigma_1)(R))\cdot {\textrm{Nrd}_2({A^{\times}_{2,R}})} \cdot {R^{\times}}^d].
\end{equation}

\item[(10)]
Let $A$ be a csa of degree $3$ over $k$, $\textrm{Nrd}$  the reduced norm
and $\textrm{Trd}$ be the reduced trace. Consider the cubic form
on the $27$-dimensional $k$-vector space
$A \times A \times A$
given by
$N:= \textrm{Nrd}(x)+\textrm{Nrd}(y)+\textrm{Nrd}(z)-\textrm{Trd}(xyz)$.
Let
$\textrm{Iso}(A,N)$
be the $k$-group scheme of isometries of $N$ and
$\textrm{Sim}(N)$
be the $k$-group scheme of similitudes of $N$.
It is known that
$\textrm{Iso}(N)$ is a normal algebraic subgroup in $\textrm{Sim}(A,N)$
and the factor group coincides with
$\mathbb G_{m,k}$. So we have a canonical $k$-group morphism
(the multiplier)
$\mu: \textrm{Sim}(N) \to \mathbb G_{m,k}$.
Now one has a functor
\begin{equation}
\label{Example11}
R \mapsto R^{\times}/\mu(\textrm{Sim}(N)(R)).
\end{equation}
Note that the connected component of $\textrm{Iso}(N)$ is
a simply connected algebraic $k$-group of the type $E_6$.

\item[(11)]
Let $(A,\sigma)$ be a csa of degree $8$ over $k$
with a symplectic involution. Let $V \subset A$ be the subspace
of all skew-symmetric elements. It is of dimension $28$.
Let
$\textrm{Pfd}$
be the reduced Pfaffian on $V$ and
$\textrm{Trd}$
be the reduced trace on $A$. Consider the degree $4$ form on
the space
$V \times V$
given by
$F:=\textrm{Pfr}(x)+\textrm{Pfr}(y)-1/4\textrm{Trd}((xy)2)-1/16{\textrm{Trd}(xy)}^2$.
Consider the symplectic form on
$V \times V$
given by
$\phi((x_1,y_1),(x_2,y_2))= \textrm{Trd}(x_1y_2-x_2y_1)$.
Let
$\textrm{Iso}(F)$
(resp. $\textrm{Iso}(\phi)$)
be the $k$-group scheme of isometries of the pair $F$
(resp. of $\phi)$).
Let
$\textrm{Sim}(F)$
(resp. $\textrm{Sim}(\phi)$)
be the $k$-group scheme of similitudes of $F$
(resp. of $\phi$).
Set
$G=\textrm{Iso}(F) \cap \textrm{Iso}(\phi)$
and
$G^+=\textrm{Sim}(F) \cap \textrm{Sim}(\phi)$.
It is known that
$G$ is a normal algebraic subgroup in $G^+$
and the factor group coincides with
$\mathbb G_{m,k}$. So we have a canonical $k$-group morphism %%%%%(the multiplier  ???) \\
$\mu: G^+ \to \mathbb G_{m,k}$.
Now one has a functor
\begin{equation}
\label{Example12}
R \mapsto R^{\times}/\mu(G^+(R)).
\end{equation}
Note that $G$ is a simply-connected group of the type $E_7$.

\end{itemize}

\end{document}